\definecolor{labelkey}{rgb}{0.0, 0.8, 0.3}
\numberwithin{equation}{section}
\definecolor{bbblue}{rgb}{0.4, 0.4, .9}
\definecolor{ggreen}{rgb}{0.0, 0.5, 0.3}
\definecolor{rred}{rgb}{0.65, 0.2, 0.2}
\definecolor{bblue}{rgb}{0.0, 0.0, 1}
\newtheorem{thm}{Theorem}
\newtheorem*{thm*}{Theorem}
\newtheorem*{ass*}{Assumption}
\newtheorem{assump}{Assumption}
\newtheorem{rem}[thm]{Remark}
\newtheorem{lem}[thm]{Lemma}
\newtheorem{cor}[thm]{Corollary}
\newtheorem{pro}[thm]{Proposition}
\newcommand{\bX}{\boldsymbol{X}} 
\newcommand{\bx}{\boldsymbol{x}} 
\newcommand{\bS}{\boldsymbol{S}} 
\newcommand{\bA}{\boldsymbol{A}} 
\newcommand{\bSigma}{\boldsymbol{\Sigma}} 
\newcommand{\bz}{\boldsymbol{z}} 
\newcommand{\by}{\boldsymbol{y}} 
\newcommand{\bw}{\boldsymbol{w}} 
\newcommand{\bzero}{\boldsymbol{0}} 
\newcommand{\bU}{\boldsymbol{U}} 
\newcommand{\bV}{\boldsymbol{V}} 
\newcommand{\bI}{\boldsymbol{I}} 
\newcommand{\bC}{\boldsymbol{C}} 
\newcommand{\bL}{\boldsymbol{L}} 
\newcommand{\bG}{\boldsymbol{G}} 
\newcommand{\bDelta}{\boldsymbol{\Delta}} 
\newcommand{\bu}{\boldsymbol{u}} 
\newcommand{\bv}{\boldsymbol{v}} 
\newcommand{\bT}{\boldsymbol{T}} 
\newcommand{\bR}{\boldsymbol{R}} 
\newcommand{\bP}{\boldsymbol{P}} 
\newcommand{\bJ}{\boldsymbol{J}} 
\newcommand{\bF}{\boldsymbol{F}} 
\newcommand{\cA}{\mathcal{A}}
\newcommand{\cC}{\mathcal{C}}
\newcommand{\cS}{\mathcal{S}} 
\newcommand{\cN}{\mathcal{N}}
\newcommand{\cB}{\mathcal{B}} 
\newcommand{\bbS}{\mathbb{S}}
\newcommand{\Sp}{\mathrm{Sp}} 		
\newcommand{\E}{\mathbb{E}}
\newcommand{\cP}{\mathcal{P}}
\newcommand{\dbw}{d_{\mathrm{BW}}}
\newcommand{\N}{\mathbb N}
\newcommand{\R}{\mathbb R}
\newcommand{\argmin}{\mathop{\mathrm{argmin}}}
\renewcommand{\phi}{\varphi}
\DeclareMathOperator{\Tr}{Tr}
\DeclareMathOperator{\id}{id}
\DeclareMathOperator{\rank}{rank}
\newcommand{\bSigmabar}{\overline{\bSigma}}
\newcommand{\xxt}{\bx\bx^\top}
\newcommand{\wwt}{\bw\bw^\top}
\newcommand{\cNo}{\cN_0(\R^d)}
\newcommand{\sign}{\mathrm{sign}}
\newcommand{\diag}{\mathrm{diag}}
\newcommand{\BW}{\textsf{BW}}
\newcommand{\mbb}[1]{\mathbb{#1}}
\begin{document}

\title{Bures-Wasserstein Barycenters and Low-Rank Matrix Recovery}

\author{\name Tyler Maunu \email maunu@brandeis.edu \\
      \addr Brandeis University
      \AND
      \name Thibaut Le Gouic \email thibaut.le\_gouic@math.cnrs.fr \\
      \addr \'Ecole Centrale de Marseille
      \AND
      \name Philippe Rigollet \email rigollet@math.mit.edu\\
      \addr MIT}


\maketitle

\thispagestyle{empty}





%
%
%
%
%
%
%
%


\begin{abstract}
	We revisit the problem of recovering a low-rank   positive semidefinite matrix from rank-one projections using tools from optimal transport. More specifically, we show that a variational formulation of this problem is equivalent to computing a Wasserstein barycenter. In turn, this new perspective enables the development of new geometric first-order methods with strong convergence guarantees in Bures-Wasserstein distance. Experiments on simulated data demonstrate the advantages of our new methodology over existing methods.
\end{abstract}


\section{Introduction}

Recovering a low-rank matrix is a fundamental primitive across many settings, such as matrix completion \citep{fazel2002matrix,candes2009exact,candes2010power}, phase retrieval~\citep{candes2015phase}, principal component analysis \citep{pearson1901liii,hotelling1933analysis}, robust subspace recovery \citep{lerman2018overview}, and robust principal component analysis \citep{chandrasekaran2009sparse,candes2011robust,xu2010robust}. This line of work can be understood as a generalization of the classical compressed sensing question \citep{donoho2006compressed, candes2006robust}, where the goal is the recovery of a sparse vector. This problem can be cast as a low-rank recovery problem over diagonal matrices. In all of these settings, the assumption of a low-rank structure is essential for efficient estimation and optimization in high-dimensional settings.

While the above applications all aim at recovering a low-rank matrix $\bS$, the observational---a.k.a sensing---mechanism that governs access to $\bS$ comes in many declinations. For the purpose of applications, it is often sufficient to focus on linear measurements of the form $\langle \bS, \bA\rangle$ for some given sensing matrix $\bA$. This setup covers a wide variety of applications ranging from covariance sketching \citep{chen2015exact} and low-rank matrix completion \citep{candes2010matrix, recht2010guaranteed} to phase retrieval \citep{fienup1978reconstruction,candes2013phaselift,candes2015phase} and quantum state tomography \citep{gross2010quantum}. New solutions to this problem can have many practical implications. 

 In this paper, we focus on a specific instantiation of this problem, where the measurement matrix $\bA=\bx\bx^\top$ is rank-one and positive semidefinite (PSD) so that $\langle \bx\bx^\top, \bS\rangle=\bx^\top \bS \bx$. This important case of the low-rank matrix recovery problem  
has received significant attention over the past few years \citep{cai2015rop,chen2015exact,sanghavi2017local,li2019nonconvex}. 

Assume that we observe 
\begin{equation}\label{eq:matrecmeas}
    y_i = \bx_i^\top  \bS \bx_i\,, i=1, \ldots, n
\end{equation}
where $\bS \in \R^{d \times d}$ is an unknown rank $r$ PSD matrix and $\bx_1, \ldots, \bx_n$ are i.i.d from some distribution.
Our goal is to recover or estimate $\bS$ from the pairs $(y_i, \bx_i)$, $i=1, \ldots, n$. Throughout, we  denote by $\mbb S_{+}^d$ the set of $d \times d$ PSD matrices and $\mbb S_{++}^d$ is the set of positive definite (PD) matrices.

Finding a low-rank matrix $\bS$ subject to constraints~\eqref{eq:matrecmeas} is a semidefinite program (SDP) that can be implemented in polynomial time using general-purpose solvers. Furthermore, the specific structure of this SDP may be leveraged to derive faster algorithms. Such solutions include the Burer-Monteiro approach to solving semidefinite programs \citep{burer2003nonlinear} or nonconvex gradient descent methods for low-rank programs \citep{sanghavi2017local}.  Often, these approaches result in \emph{nonconvex} optimization programs for which theoretical results are limited. 


In this work, we take a principled approach to solving this problem by eliciting convexity using a specific geometry on the the space of PSD matrices. More precisely, we employ the Bures-Wasserstein (hereafter \BW) geometry, which comes independently from optimal transport and quantum information theory~\citep{bures1969extension, bhatia2019bures}. This geometry allows us to solve the original problem  by computing a  \emph{\BW\ barycenter} \citep{agueh2011barycenter,alvarez2016fixed,chewi2020gradient,altschuler2021averaging}. In turn, we employ  geodesic gradient descent on the \BW\ manifold to compute said barycenter. We propose both full gradient and stochastic gradient based methods that are guaranteed to efficiently recover a low-rank matrix. These methods have low computational cost (per iteration complexity of $O(ndr)$ for gradient descent and $O(dr)$ for stochastic gradient descent), have minimal parameter tuning, are easily implemented, and show excellent practical performance. We demonstrate an example application of phase retrieval in Figure \ref{fig:pr}. In this set-up, \BW\ gradient descent recovers the image faster than Wirtinger Flow (WF) \citep{candes2015phase}, and \BW\ gradient descent needs no parameter tuning.

\begin{figure}[h!]
    \centering
    \includegraphics[width = .8\columnwidth]{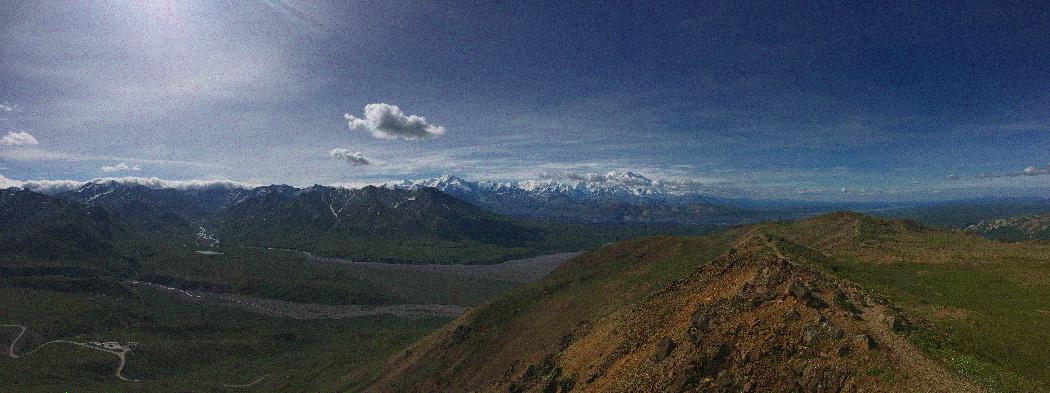}
    \includegraphics[width = .8\columnwidth]{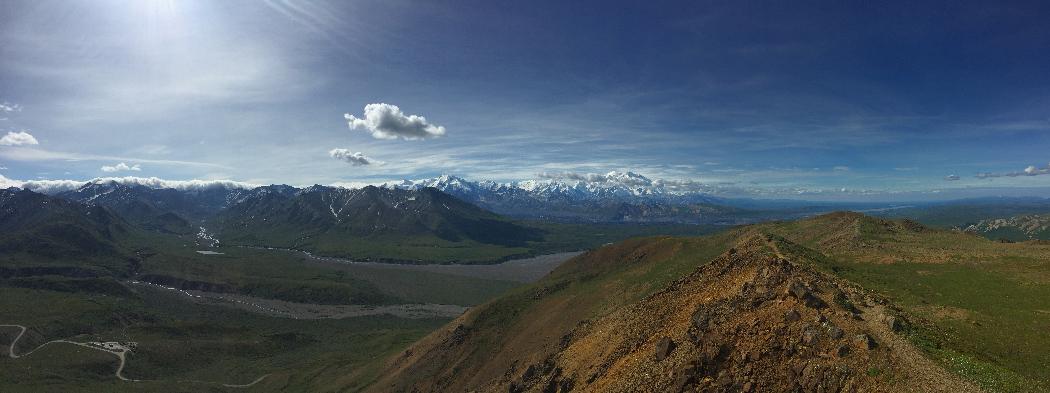}
    \caption{Recovered images after 140 iterations of Wirtinger Flow~\citep{candes2015phase} and \BW\ gradient descent (our method). \BW\ gradient descent recovers a much sharper image within the same number of iterations; note that the iteration complexity of both methods is the same.}
    \label{fig:pr}
\end{figure}



\medskip

\noindent{\bf Main contributions.} The main results of this paper are:
\begin{enumerate}
    \item We prove that the barycenter of a certain distribution of rank-one Gaussians exactly recovers the underlying low-rank matrix.
	\item With this connection, we give novel geodesic gradient descent and stochastic geodesic gradient descent algorithms for solving the low-rank PSD matrix recovery problem using existing first-order algorithms for computing \BW\ barycenters. 
	\item Existing first-order algorithms for computing \BW\ barycenters are only guaranteed to work for full rank distributions. Since our method considers barycenters of rank-one PSD matrices, we develop new theory and give a guarantee of local linear convergence in \BW\ distance for the gradient descent method. We also discuss initialization of our method.
	\item We demonstrate the competitive edge of our algorithms in a few experimental settings.
\end{enumerate}

\medskip
\noindent{\bf Related Work.}
Many methods have been proposed to solve variants of the matrix recovery problem. Original ideas for this problem trace back to linear systems theory, low-rank matrix completion, low-dimensional Euclidean embeddings, and image compression \citep{recht2010guaranteed}.


We focus here on rank-one projections of positive semidefinite matrices as in~\eqref{eq:matrecmeas}. This setup is either specifically considered or a special case of a large number of works including~\citep{candes2015phase,cai2015rop,chen2015exact,zhong2015efficient,wang2016solving,wang2017solving,sanghavi2017local,li2019nonconvex}. These methods can be clustered into two families. The first one aims at minimizing convex relaxations of an energy functional that often based on the nuclear norm~\citep{cai2015rop,chen2015exact}. Such convex programs can be solved via standard solvers. Another family of methods directly implement the low-rank constraint into a nonconvex constraint~\citep{li2019nonconvex} thus manipulating candidate matrices with smaller representations and thereby boosting computational efficiency; see~\cite{chi2019nonconvex} for an overview of such algorithms.
The special case where $\bS$ has rank one corresponds to the classical phase retrieval problem and has received much attention with dedicated algorithms~\citep{fienup1978reconstruction,candes2015phase,wang2016solving,wang2017solving,chi2019nonconvex}.

Note that the methods introduced in the present paper can be readily extended to the \emph{covariance recovery}  problem framed in \cite{cai2015rop}, and that is similar to estimation in a random effects model. Under this model, rather than~\eqref{eq:matrecmeas}, we observe $y_i = \langle \bx_i, \bw_i\rangle + \epsilon_i,$
 where $\bw_i \sim N(\bzero, \bS)$, where $N(\bzero, \bS)$ is the centered Gaussian distribution on $\R^d$ with PSD covariance matrix $\bS$. The goal here is to recover the covariance matrix $\bS$ of the weight vectors from observations of $(y_i, \bx_i)$, $i=1, \ldots, n$. This problem has natural connections to mixture of regressions models as well~\citep{de1989mixtures,yi2014alternating,zhong2016mixed,sedghi2016provable}.


In terms of the complexity of various methods, solving the semidefinite program using off-the-shelf solvers takes $O(nd^2 + d^3)$ complexity.
Gradient descent on PSD matrices (see the initialization phase in \citep{tu2016low}) can solve this with per-iteration complexity $O(nd^2)$, and one can prove linear convergence under certain assumptions. 
The most directly comparable methods are nonconvex gradient descent \citep{li2019nonconvex}, which have per-iteration complexity $O(ndr)$ and utilize a Burer-Monteiro factorization~\citep{burer2003nonlinear}. As we will see, our method also has per-iteration complexity of $O(ndr)$.

Finally, we mention several works dedicated to the computation of the nonconvex Wasserstein barycenter problem \citep{agueh2011barycenter,alvarez2016fixed,zemel2019procrustes,chewi2020gradient, altschuler2021averaging}. No theoretical study in these works allows for rank deficiency.

\medskip
\noindent{\bf Notation.}
Bold capital letters  denote matrices while bold lower-case letters  denote vectors. The Hilbert-Schmidt inner product is $\langle\cdot, \cdot \rangle$, and  $\langle \cdot, \cdot \rangle_{\gamma_{\bSigma}}  = \langle \cdot, \cdot \rangle_{\bSigma}$ is the Riemannian metric associated to the (fixed-rank) \BW\ manifold at $\bSigma$. Their corresponding norms are written as $\|\cdot\|$ and $\|\cdot\|_{\bSigma}$, respectively. The orthogonal projection onto the column span of $\bSigma$ is $\bP_{\bSigma} = \bP_{\Sp(\bSigma)}$. Similarly, $\bP_{\bSigma}^\perp$ is the projection onto null-space of $\bSigma$ (orthogonal complement of $\Sp(\bSigma)$). The Dirac distribution at a point $x$ denoted by $\delta_{x}$.

\medskip
\noindent{\bf Outline.}
We begin by outlining our approach to matrix recovery in Section \ref{sec:approach}.
We then give the main theoretical results for our approach in Section~\ref{sec:theory}. After this, we give experiments demonstrating these advantages in Section~\ref{sec:exp}, and discuss the limitations of our work in Section \ref{sec:lim}.


\section{The Bures-Wasserstein Barycenter Approach}
\label{sec:approach}

Recall that we aim at find the rank $r$ matrix $\bS\in \mbb S_+^d$  given the observations \eqref{eq:matrecmeas}. We will use the notation $y_i = \cA(\bS)_i= \langle \bx_i \bx_i^\top,\bS\rangle $, $i=1,\dots,n$.
To recover a low-rank matrix $\bS$ from these measurements, most past work has focused on some form of energy minimization. For example, some works have looked at convex nuclear norm minimization methods. \cite{cai2015rop} and \cite{chen2015exact} concurrently developed a nuclear norm minimization procedure that solves
\begin{equation}\label{eq:nucl}
	\min_{\substack{\bSigma \in \bbS_{+}^d ,\ \cA(\bSigma) = \by }} \Tr(\bSigma),
 \end{equation}
where $\by = [y_1, \dots, y_n]^\top$ and $\cA(\bSigma) = [\cA(\bSigma)_1, \dots, \cA(\bSigma)_n]^\top$. 
One can directly solve the semidefinite program using standard convex optimization packages.
A Lagrangian formulation of \eqref{eq:nucl} yields the energy
$\frac{1}{2} \|\cA(\bSigma) - \by \|_2^2 + \lambda \Tr(\bSigma),$
which can also be minimized using a variety of methods.

In the following, we lay out our approach to the low-rank matrix recovery problem, which focuses on a new energy minimization procedure. 
In Section~\ref{subsec:nonconvmr} we discuss the common nonconvex approaches to matrix recovery and outline our novel optimization program. Then, in Section~\ref{subsec:bwbprob}, we discuss the \BW\ barycenter problem, and show how it recovers solutions to the energy minimization we propose. After this, in Section \ref{subsec:algos} we outline the first-order algorithms for computing \BW\ barycenters. We finish in Section \ref{subsec:regalgo} by discussing a regularization procedure that allows one to estimate higher rank proxies, from which it is possible to recover $\bS$.

\subsection{Nonconvex Approaches for Matrix Recovery}
\label{subsec:nonconvmr}

Suppose that we know an upper bound for the rank of the underlying matrix $\bS$. We could utilize this information in a nonconvex optimization program such as
\begin{align}\label{eq:lsmatrec_nc1}
	\min_{\substack{\bSigma \in \bbS_{+}^d ,\ \rank(\bSigma) \leq r}}\ &\frac{1}{2n} \|\cA(\bSigma) - y\|_2^2  .
\end{align}
Without the rank restriction (i.e., $r=d$) this problem is in fact convex. For any fixed $r \leq d$, we can parameterize the rank $r$ matrices in $\bbS_+^{d}$ by $\bU \bU^\top$, for $\bU \in \R^{d \times r}$, which is now commonly referred to as Burer-Monteiro factorization~\citep{burer2003nonlinear}.  We thus define the set of PSD matrices of rank at most $r$ using this factorization: $\bbS_+^{d,r} := \{\bSigma \in \bbS_+^d : \bSigma = \bU\bU^\top, \ \bU \in \R^{d \times r}\}$. With this parametrization, the matrix recovery problem in \eqref{eq:lsmatrec_nc1} is equivalent to
\begin{equation}\label{eq:lsmatrec_nc2}
	\min_{\substack{\bU \in \R^{d \times r}  }} \frac{1}{2} \|\cA(\bU \bU^\top) - \by\|_2^2.
\end{equation}

While past work has focused on these least squares formulations, there have not been many modifications of this energy. We propose the following modifications to the energies \eqref{eq:lsmatrec_nc1} and \eqref{eq:lsmatrec_nc2}:
\begin{align}\label{eq:lsmatrec_new}
	\min_{\substack{\bSigma \in \bbS_{+}^d, \ \rank(\bSigma) \leq r}}\  &\frac{1}{2n} \| \sqrt{\cA(\bSigma)} - \sqrt{\by}\|_2^2  =\min_{\bU \in \R^{d \times r}}  \frac{1}{2n} \| \sqrt{\cA(\bU\bU^\top)} - \sqrt{\by}\|_2^2,
\end{align}
where the square root is taken componentwise. As we demonstrate in the following sections, this problem has a natural solution as a \BW\ barycenter. 


\subsection{The Bures-Wasserstein Barycenter Problem}
\label{subsec:bwbprob}


To explain the connection of \eqref{eq:lsmatrec_new} to \BW\ barycenters, we will first explain how \BW\ space arises from the perspective of optimal transport~\citep{villani2009optimal}. Let $\cP_2(\R^d)$ be the set of all measures on $\R^d$ with finite second moment. The 2-Wasserstein distance between measures $\mu$ and $\nu \in \cP_2(\R^d)$ is defined by
\begin{equation}\label{eq:2wass}
    W_2^2(\mu, \nu) = \inf_{\pi \in \Pi(\mu, \nu)} \E_{(\bx,\by) \sim \pi} \|\bx - \by\|^2,
\end{equation}
where $\Pi(\mu, \nu)$ denotes the set of all couplings between $\mu$ and $\nu$ (i.e., the set of all joint distributions on $\R^d \times \R^d$ with marginals $\mu$ and $\nu$). 
The 2-Wasserstein distance defines a metric over $\cP_2(\R^d)$, and the resulting geodesic metric space is referred to as 2-Wasserstein space.

Let $\cN(\R^d)$ denote the set of Gaussian distributions on $\R^d$, and $\cN_0(\R^d)$ be the set of centered Gaussian distributions.
Both are geodesically weakly convex subsets of 2-Wasserstein space, meaning there always exist 2-Wasserstein geodesics between points in these sets that are contained within these sets. Letting $N(\bzero, \bSigma) \in \cNo$ denote the Gaussian distribution on $\R^d$ with mean zero and covariance matrix $\bSigma \in \bbS_+^d$, the 2-Wasserstein distance between $N(\bzero, \bSigma_0)$ and $N(\bzero, \bSigma_1)\in \cNo$ has the explicit form 
\begin{align}\label{eq:GaussWass}
	W_2^2&(N(\bzero, \bSigma_0), N(\bzero, \bSigma_1)) = \Tr\big[\bSigma_0 + \bSigma_1 - 2 (\bSigma_0^{1/2}  \bSigma_1 \bSigma_0^{1/2})^{1/2}\big].
\end{align}
Notice that this is purely a function of the covariance matrices, and so the Wasserstein distance induces a distance metric on PSD matrices called the \emph{Bures-Wasserstein distance} \citep{bhatia2019bures}.
To refer to this distance over PSD matrices rather than the Gaussian distributions, we will write
\begin{equation}
	\dbw (\bSigma_0, \bSigma_1) = W_2(N(\bzero, \bSigma_0), N(\bzero, \bSigma_1)).
\end{equation}
More than just giving the set of PSD matrices a distance metric, this identification endows $\bbS_+^d$ with a natural Riemannian structure that it inherits from $(\cNo, W_2)$.

The barycenter problem seeks to generalize the notion of averages to non-Euclidean spaces. In the 2-Wasserstein barycenter problem, one seeks a solution to
\begin{equation}\label{eq:bary}
	\min_{b \in \cP_2(\R^d)} \frac{1}{2}\E_{\mu \sim Q} W_2^2(\mu, b),
\end{equation}
where $Q$ is a distribution over $\cP_2(\R^d)$ with finite second moment, which we write as $Q \in \cP_2(\cP_2(\R^d))$. When $Q$ is supported on Gaussians, the minimum is achieved on Gaussians \citep{knott1994generalization,agueh2011barycenter,alvarez2016fixed}. For $\cN_0(\R^d)$, due to the identification in \eqref{eq:GaussWass}, this is equivalent to the Fr\'echet mean of PSD matrices on the \BW\ manifold. Without loss of generality, we think of $Q$ as a distribution over PSD matrices.

We finally arrive at the connection between low-rank PSD matrix recovery and \BW\ barycenters. The following proposition connects the barycenter problem~\eqref{eq:bary} when $Q = Q_n = \frac{1}{n} \sum_{i=1}^n \delta_{N(\bzero, y_i \bx_i \bx_i^\top)}$ to our new low-rank matrix recovery program \eqref{eq:lsmatrec_new}, provided that $\frac{1}{n} \sum_{i=1}^n \bx_i \bx_i^\top = \bI$. This proposition indicates that we can recover the matrix $\bS$ by solving a Wasserstein barycenter problem.

\begin{pro} \label{prop:equiv}
If $\frac{1}{n} \sum_i \bx_i \bx_i^\top = \bI$, then
	\begin{align}\label{eq:bwmr}
	    \argmin_{\bSigma \in \bbS_+^d} &\frac{1}{2n} \| \sqrt{\cA(\bSigma)} - \sqrt{\by}\|_2^2 = \argmin_{\bSigma \in \bbS_+^d} \frac{1}{2n} \sum_{i=1}^n \dbw^2(\bSigma, y_i \bx_i \bx_i^\top).
	\end{align}
\end{pro}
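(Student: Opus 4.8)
The plan is to show that the two objective functions in \eqref{eq:bwmr} differ only by an additive constant that is independent of $\bSigma$; since shifting an objective by a constant does not move its minimizers, the two argmin sets then coincide. Concretely, I would expand each side into the shape (linear-in-$\bSigma$ trace term) $-$ (interaction term) $+$ (constant), and verify that the two $\bSigma$-dependent parts are literally the same expression.

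First I would expand the least-squares objective. Since $\cA(\bSigma)_i = \bx_i^\top \bSigma \bx_i \ge 0$ and $y_i \ge 0$, we have $(\sqrt{\cA(\bSigma)_i} - \sqrt{y_i})^2 = \cA(\bSigma)_i - 2\sqrt{y_i\,\cA(\bSigma)_i} + y_i$. Summing and using the isotropy hypothesis, $\frac1n\sum_i \cA(\bSigma)_i = \Tr\!\big[(\tfrac1n\sum_i \bx_i\bx_i^\top)\bSigma\big] = \Tr[\bI\,\bSigma] = \Tr[\bSigma]$, so the least-squares objective becomes $\tfrac12\Tr[\bSigma] - \tfrac1n\sum_i \sqrt{y_i\,\cA(\bSigma)_i} + \tfrac{1}{2n}\sum_i y_i$.

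Next I would expand the barycenter objective using \eqref{eq:GaussWass} with $\bSigma_0 = \bSigma$ and $\bSigma_1 = y_i\bx_i\bx_i^\top$. The terms $\Tr[\bSigma]$ and $\Tr[y_i\bx_i\bx_i^\top] = y_i\|\bx_i\|^2$ are immediate; the crux is the interaction term $\Tr\!\big[(\bSigma^{1/2}\, y_i\bx_i\bx_i^\top\, \bSigma^{1/2})^{1/2}\big]$. Here I would exploit the rank-one structure: writing $\bv = \bSigma^{1/2}\bx_i$, the inner matrix equals $y_i\,\bv\bv^\top$, a rank-one PSD matrix whose only nonzero eigenvalue is $y_i\|\bv\|^2$, so its square root is $\tfrac{\sqrt{y_i}}{\|\bv\|}\bv\bv^\top$ (and $\bzero$ when $\bv=\bzero$) and its trace is $\sqrt{y_i}\,\|\bv\| = \sqrt{y_i\,\bx_i^\top\bSigma\bx_i} = \sqrt{y_i\,\cA(\bSigma)_i}$. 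Averaging then gives $\tfrac12\Tr[\bSigma] - \tfrac1n\sum_i \sqrt{y_i\,\cA(\bSigma)_i} + \tfrac{1}{2n}\sum_i y_i\|\bx_i\|^2$. The $\bSigma$-dependent parts match the least-squares objective exactly, and the only difference lies in the two $\bSigma$-free constants $\tfrac{1}{2n}\sum_i y_i$ versus $\tfrac{1}{2n}\sum_i y_i\|\bx_i\|^2$, which establishes the claim.

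The hard part is the interaction term: this single computation is what makes the square-root loss in \eqref{eq:lsmatrec_new} (rather than the usual squared loss) align exactly with the \BW\ geometry, and it is only tractable because the rank-one structure of $y_i\bx_i\bx_i^\top$ collapses the matrix square root to an explicit scalar. Two minor points warrant care. First, formula \eqref{eq:GaussWass} is applied with a rank-deficient second argument $y_i\bx_i\bx_i^\top$; this is legitimate, as the Gaussian Wasserstein formula holds for all PSD covariances, not only positive definite ones. Second, the degenerate case $\bSigma^{1/2}\bx_i = \bzero$ must be handled, but the identity $\Tr = \sqrt{y_i\,\cA(\bSigma)_i}$ covers it automatically since both sides vanish there.
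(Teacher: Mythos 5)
Your proposal is correct and follows essentially the same route as the paper: expand both objectives, use the isotropy hypothesis to turn $\frac1n\sum_i \cA(\bSigma)_i$ into $\Tr(\bSigma)$, and match the $\bSigma$-dependent terms, with the argmins coinciding because the remaining discrepancy is a $\bSigma$-free constant. The only difference is that you explicitly work out the rank-one interaction term $\Tr\bigl[(\bSigma^{1/2} y_i \bx_i\bx_i^\top \bSigma^{1/2})^{1/2}\bigr] = \sqrt{y_i\,\bx_i^\top\bSigma\bx_i}$ (including the degenerate case), a step the paper dispatches with "it is not hard to show."
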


To make this result practical, we cannot assume in general that $\frac{1}{n} \sum_i \bx_i \bx_i = \bI$. If we instead encounter a case where $\frac{1}{n} \sum_{i=1}^n \bx_i\bx_i^\top = \bC_n$, where $\bC_n$ is the PD sample covariance matrix of the vectors $\bx_1, \dots, \bx_n$, then the transformation $\bx_i \mapsto \bC_n^{-1/2} \bx_i$ outputs vectors with identity covariance (i.e., $\frac{1}{n} \sum_{i=1}^n\bC_n^{-1/2} \bx_i\bx_i^\top\bC_n^{-1/2} = \bI$). We are able use this fact to recover the matrix $\bS$, as we show in the following proposition.

\begin{pro}\label{prop:whitenedbary}
	 Let $\bC_n = \frac{1}{n} \sum_{i=1}^n \bx_i \bx_i^\top \in \bbS_{++}^d$. Then 
	 \begin{align}
	     \argmin_{\bSigma \in \bbS_{+}^d} &\frac{1}{2n}\| \sqrt{\cA(\bC_n^{-1/2} \bSigma \bC_n^{-1/2})} - \sqrt{\by}\|^{2} = \argmin_{\bSigma \in \bbS_{+}^d} \frac{1}{2n} \sum_{i=1}^n \dbw^2(\bSigma, y_i \bC_n^{-1/2} \bx_i \bx_i^\top  \bC_n^{-1/2}),
	 \end{align}
	 and $\bC_n^{1/2} \bS \bC_n^{1/2}$ is a solution to both problems.
\end{pro}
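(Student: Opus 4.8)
The plan is to reduce this statement to Proposition~\ref{prop:equiv} by whitening the sensing vectors. Since $\bC_n \in \bbS_{++}^d$ is invertible, I would introduce the transformed vectors $\bz_i = \bC_n^{-1/2}\bx_i$ and first observe that they are isotropic:
\[
\frac{1}{n}\sum_{i=1}^n \bz_i\bz_i^\top = \bC_n^{-1/2}\Big(\frac{1}{n}\sum_{i=1}^n \bx_i\bx_i^\top\Big)\bC_n^{-1/2} = \bC_n^{-1/2}\bC_n\bC_n^{-1/2} = \bI,
\]
so the hypothesis of Proposition~\ref{prop:equiv} holds for the whitened data $(y_i,\bz_i)$.

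The second step is to track how the two objects appearing in Proposition~\ref{prop:equiv} --- the sensing operator and the atoms of the barycenter --- transform under this change of variables. Writing $\tilde\cA$ for the sensing operator built from the $\bz_i$, a direct computation gives, for any $\bSigma \in \bbS_+^d$,
\[
\tilde\cA(\bSigma)_i = \bz_i^\top \bSigma \bz_i = \bx_i^\top \bC_n^{-1/2}\bSigma\bC_n^{-1/2}\bx_i = \cA(\bC_n^{-1/2}\bSigma\bC_n^{-1/2})_i,
\]
while the barycenter targets satisfy $y_i\bz_i\bz_i^\top = y_i \bC_n^{-1/2}\bx_i\bx_i^\top\bC_n^{-1/2}$. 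Substituting these two identities into the conclusion of Proposition~\ref{prop:equiv} applied to $(y_i,\bz_i)$ yields precisely the claimed equality of the two $\argmin$ sets.

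It remains to verify that $\bC_n^{1/2}\bS\bC_n^{1/2}$ is a minimizer. I would argue on the left-hand (least-squares) problem, where the objective is nonnegative, so it suffices to exhibit a point at which it vanishes. Setting $\bSigma = \bC_n^{1/2}\bS\bC_n^{1/2}$ gives $\bC_n^{-1/2}\bSigma\bC_n^{-1/2} = \bS$, hence $\cA(\bC_n^{-1/2}\bSigma\bC_n^{-1/2}) = \cA(\bS) = \by$ and the objective equals zero; since $\bC_n^{1/2}\bS\bC_n^{1/2} \in \bbS_+^d$ (conjugation by $\bC_n^{1/2}$ preserves positive semidefiniteness), it is a global minimizer of the left problem, and therefore of the right one as well by the equality of $\argmin$ sets just established.

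There is no deep obstacle here: the argument is essentially a verbatim reduction, and the only point requiring care is the bookkeeping in the second step --- confirming that whitening transforms the sensing operator and the atoms $y_i\bx_i\bx_i^\top$ consistently, so that Proposition~\ref{prop:equiv} can be invoked without modification. One should also record the implicit facts that $y_i = \bx_i^\top\bS\bx_i \ge 0$ (so that $\sqrt{\by}$ is well defined) and that $\bC_n^{-1/2}\bSigma\bC_n^{-1/2}$ ranges over all of $\bbS_+^d$ as $\bSigma$ does, so that no minimizers are created or lost under the reparametrization.
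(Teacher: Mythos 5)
Your proof is correct, but it takes a genuinely different route from the paper's. You reduce the statement to Proposition~\ref{prop:equiv} by whitening ($\bz_i = \bC_n^{-1/2}\bx_i$), note that the two objectives in the claim are literally the objectives of Proposition~\ref{prop:equiv} for the data $(y_i,\bz_i)$ (the proof of that proposition never uses a generative model for $y_i$, only $y_i\ge 0$ and isotropy, so invoking it here is legitimate), and then identify the minimizer by observing that the nonnegative least-squares objective vanishes at $\bSigma = \bC_n^{1/2}\bS\bC_n^{1/2}$ since $\cA(\bS)=\by$ exactly in the noiseless model. The paper instead works on the barycenter side: it verifies that $\bSigma_n = \bC_n^{1/2}\bS\bC_n^{1/2}$ satisfies the barycenter first-order (fixed-point) condition
\begin{equation*}
\frac{1}{n}\sum_{i=1}^n \frac{\Tr(\bx_i\bx_i^\top\bS)^{1/2}}{\Tr(\bC_n^{-1/2}\bx_i\bx_i^\top\bC_n^{-1/2}\bSigma_n)^{1/2}}\,\bC_n^{-1/2}\bx_i\bx_i^\top\bC_n^{-1/2} = \bI,
\end{equation*}
which collapses to $\bC_n^{-1/2}\bC_n\bC_n^{-1/2}=\bI$ when $\bC_n^{-1/2}\bSigma_n\bC_n^{-1/2}=\bS$, and then appeals to sufficiency of this stationarity condition, which rests on the convexity and first-order-optimality machinery developed elsewhere in the appendix. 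What each buys: your argument is more elementary and self-contained (nothing beyond ``a nonnegative function attaining zero is minimized there''), and it explicitly establishes \emph{both} claims of the proposition, including the equality of argmin sets, which the paper's proof leaves implicit; the paper's verification, on the other hand, connects the result directly to the fixed-point iteration $\bSigma_{k+1} = \tilde{T}(\bSigma_k)\bSigma_k\tilde{T}(\bSigma_k)$ that underlies the algorithms analyzed later. Both arguments rely equally on the exact (noiseless) measurements $y_i = \bx_i^\top\bS\bx_i$.
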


Notice that one can recover $\bS$ from $\bC_n^{1/2} \bS \bC_n^{1/2}$ as $\bS = \bC_n^{-1/2} \bC_n^{1/2} \bS \bC_n^{1/2} \bC_n^{-1/2}$.
Thus, in the sample setting where $\bC_n$ is not exactly the identity and assuming we can solve the barycenter problem, we envision a two stage procedure: 1) recover the barycenter $\bSigma_n$ of the $n$ matrices $y_1 \bC_n^{-1/2} \bx_1 \bx_1^\top \bC_n^{-1/2}, \dots, y_n \bC_n^{-1/2} \bx_n \bx_n^\top \bC_n^{-1/2}$, and 2) transform the barycenter by $\bC_n^{-1/2} \bSigma_n \bC_n^{-1/2}$ to find $\bS$. 

The whitening step can be efficiently computed since we can use any linear transformation $\bL^{-1}$ such that $\bL^{-1} \frac{1}{n} \sum_{i=1}^n  \bx_i \bx_i^\top \bL^{-1\top} = \bI$. For example, with the Cholesky factorization $\bC_n = \bL \bL^\top$, we can solve the equations $\bL \bz_i = \bx_i$ for $\bz_i$, and these satisfy $\frac{1}{n} \sum_i \bz_i \bz_i^\top = \bI$.

The connections established by Propositions \ref{prop:equiv} and \ref{prop:whitenedbary} enables the development of novel methods for the matrix recovery problem, since we can solve a specific Wasserstein barycenter problem rather than the original matrix recovery problem \eqref{eq:lsmatrec_new}. In other words, any methods that solve this Wasserstein barycenter problem could be used to solve the matrix recovery problem. Since barycenters are geometric notions of averages, this naturally leads to the development of novel geometric methods for matrix recovery.

\subsection{Algorithms for Barycenters}
\label{subsec:algos}

The primary way to compute \BW\ barycenters involves Riemannian gradient descent~\citep{alvarez2016fixed,chewi2020gradient,altschuler2021averaging}. 
Following the results in the last section, we wish to find the \BW\ barycenter of the matrices $\bX_i = y_i \bC_n^{-1/2}\bx_i \bx_i^\top \bC_n^{-1/2}$, $i=1, \dots, n$. In other words, we seek to minimize the energy function $F:\bbS_+^d \to \R$ given by
\begin{align}\label{eq:fcnlemp}
F(\bSigma) &= \frac{1}{2n} \sum_{i=1}^n \dbw^2(\bX_i, \bSigma).
\end{align}
For ease of notation, we will write $F$ as an expectation over $y_i \bC_n^{-1/2}\bx_i \bx_i^\top \bC_n^{-1/2} \sim Q$, and whether or not $Q$ is a discrete distribution will be made clear from context.

The gradient of $F$ at full rank $\bSigma_0$ is
\begin{align*}
\nabla F(\bSigma_0) &= \bI - \E_{\bX \sim Q} \frac{\bX}{\sqrt{\Tr(\bX \bSigma_0)}} =: \bI - \tilde{T}(\bSigma_0) ,
\end{align*}
where for convenience we have defined the quantity $\tilde{T}(\bSigma_0)$. At low-rank $\bSigma_0$, this is a subgradient \citep{clarke1990optimization}.
We note that this holds for general measures $Q$ where we can differentiate under the integral.

Wasserstein gradient descent uses the gradient to determine a geodesic along which to move. In Wasserstein space, this is the ``pushforward" direction in a base measure is transported. For a complete description of Wasserstein gradient descent, see \citep{alvarez2016fixed,zemel2019procrustes,chewi2020gradient,altschuler2021averaging}, and for a more complete discussion of \BW\ geometry, the reader should consult \cite{bhatia2019bures}. We have included a discussion of the geodesic structure of \BW\ space in the appendix.  For our purposes, we consider \BW\ gradient descent (\BW GD) with step size $\eta_k$:
\begin{equation}\label{eq:gd}
	\bSigma_{k+1} = (\bI - \eta_k\nabla F(\bSigma_k)) \bSigma_k (\bI - \eta_k \nabla F(\bSigma_k)).
\end{equation}
When $\eta_k = 1$, this corresponds to the fixed point iteration of \cite{alvarez2016fixed}.
Note that this is easy to extend to the stochastic setting: if we observe a stochastic gradient $\bG_k$ rather than $\nabla F(\bSigma_k)$, then the \BW\ stochastic gradient descent (\BW SGD) iteration would use $\bG_k$ in place of $\nabla F(\bSigma_k)$.
For example, one common variant of such a stochastic gradient method uses
\begin{equation}\label{eq:gradss}
	\bG_k = \bI - \frac{\bX_k}{\Tr(\bX_k \bSigma_k)^{1/2}},
\end{equation}
where $k=1, \dots, n$. In other words, this variant of stochastic gradient descent passes over each sample one at a time. At each point in time, we take a gradient with respect to that sample alone and move in the minus gradient direction.

To save computational time and to allow efficient computation in the low-rank case, we modify the \BW\ gradient descent iteration \eqref{eq:gd} and the \BW SGD iteration to instead operate on factorized matrices. This means that instead of storing the sequence $\bSigma_k$ for $k \in \N$, we instead store the sequence
\begin{equation}\label{eq:frgd}
	\bU_{k+1} = (\bI - \eta_k \nabla F(\bU_k \bU_k^\top)) \bU_k.
\end{equation}
We note that if $\bSigma_k$ is low-rank in \eqref{eq:gd}, then the update in \eqref{eq:frgd} is equivalent to \eqref{eq:gd}. In particular, it is not hard to show that if $\bU_k \bU_k^\top = \bSigma_k$, then $\bU_{k+1} \bU_{k+1}^\top = \bSigma_{k+1}$. 
In the same way as before, stochastic gradient methods naturally extend to the low-rank setting. This update corresponds to a geodesic gradient descent update over the fixed rank \BW\ manifold \citep{massart2020quotient}.

When $Q = Q_n$, the \BW\ gradient descent updates can be computed in $O(ndr)$ time. This follows from the fact that we can rewrite the update in \eqref{eq:frgd} as
\begin{equation}
    \bU_{k+1}	= (1-\eta_k) \bU_k + \frac{\eta_k}{n} \sum_{i=1}^n \frac{\bx_i \bx_i^\top \bU_k}{\|\bU_k^\top \bx_i\|}.
\end{equation}
In the case of single-sample streaming \BW SGD, which uses the gradient in \eqref{eq:gradss}, the updates take $O(dr)$ time.
We also note that both the \BW\ gradient descent and \BW SGD iterations maintain the rank of the updated matrix for all $\eta_k \in [0, 1)$. For $\eta_k = 1$, the rank is maintained for \BW\ gradient descent as long as $\rank(\sum_i \bx_i \bx_i^\top) = d$. 

\subsection{Regularization through Perturbed Gradient Descent}
\label{subsec:regalgo}

As we demonstrate later, our current theory only works for $r \geq 3$. To extend our results to the case of $r=1$ or $r=2$, we develop a regularized method. 
Consider the observation model $y_i = \langle \bx_i \bx_i^\top, \bS\rangle$ for a rank $r$ matrix $\bS$ and $i=1, \dots, n$. If we choose an arbitrary rank $r'$ matrix $\bDelta$, we could instead try to recover the rank at most $r+r'$ matrix $\bS + \bDelta$ from the observations $\tilde y_i = \langle \bx_i \bx_i^\top, \bS + \bDelta \rangle = \langle \bx_i \bx_i^\top, \bS\rangle + \langle \bx_i \bx_i^\top,  \bDelta \rangle$, which can be computed by computing and adding the factors $\langle \bx_i \bx_i^\top,  \bDelta \rangle$ to the observations $y_i$. If we then find the barycenter of $\tilde y_i \bx_i \bx_i^\top$ (provided that it is unique, which we prove later), then this would recover $\bS + \bDelta$! Since this is true for any such $\bDelta$, it can be picked by the user beforehand. One can then recover $\bS$ from simple subtraction: $\bS =\bS + \bDelta - \bDelta$. In brief, every rank $r$ recovery problem can be solved by instead first solving the rank $r + r'$ problem to find $\bS + \bDelta$ and then subtracting off the perturbation factor $\bDelta$.

\section{Theoretical Results}
\label{sec:theory}

We now discuss the main theoretical results of this paper. 
We make the following assumption on our measurement model in \eqref{eq:matrecmeas}.
\begin{assump}\label{assump:specgauss}
	We observe data from the model \eqref{eq:matrecmeas} with $\bx_i \overset{i.i.d.}{\sim} N(\bzero, \bI)$. 
	The underlying matrix $\bS$ is rank $r$ and satisfies $m \leq \lambda_r(\bS)  \leq \lambda_1(\bS) \leq M$. 
\end{assump}

We believe that the assumption of Gaussianity can be weakened to sub-Gaussianity without too much trouble. The sub-Gaussianity is essential for an $\ell_2/\ell_1$ restricted isometry property~\cite{chen2015exact,cai2015rop} that we need to hold (see Appendix \ref{subsec:ripcond}).
Our main result on matrix recovery with gradient descent for \BW\ barycenters is given in the following theorem.

\begin{thm}\label{thm:main}
	Suppose that we observe $y_i=\langle \bx_i \bx_i^\top, \bS\rangle$, $i=1, \dots, n$, where $\bx_i$ and $S$ satisfy Assumption~\ref{assump:specgauss}. Suppose further that $r = \rank(\bS) \geq 3$, and let $\bS_n = \bC_n^{1/2} \bS \bC_n^{1/2}$. Then, for constants $c_1, c_2$, if $n \gtrsim dr$, with probability at least $1-\exp(-c_2 n)$, 
	\begin{enumerate}
		\item $\bS_n$ is the unique global minimizer of $F$ over $\bbS_+^d$.
		\item Let $\bU_0$ be the initial iterate of \BW\ gradient descent. If $\lambda_1(\bU_0\bU_0^\top) \leq M$, and $F(\bU_0\bU_0^\top) - F(\bS_n) \leq  \frac{c_1^{5}m^8r^2 \beta^{10}}{6^5 M^{15/2}d^5}$ for an $\bS$ dependent constant $\beta$, then \BW\ gradient descent with step size $1$ satisfies the following bound for $\cC = O\big(c_1 m / M^{3/2}\big)$.
	\end{enumerate}
	\vspace{-.5cm}
	\begin{align}
	\dbw^2 (\bU_k \bU_k^\top,& \bC_n^{1/2} \bS \bC_n^{1/2}) \leq (1- \cC)^k (F(\bU_0 \bU_0^\top) -F(\bS) ).
	\end{align}
\end{thm}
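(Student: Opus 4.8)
The plan is to split the argument into the global statement (item 1) and the local convergence bound (the display). For item 1, I would invoke the variational equivalence of Proposition~\ref{prop:whitenedbary}: minimizing $F$ is the same as minimizing $\frac{1}{2n}\|\sqrt{\cA(\bC_n^{-1/2}\bSigma\bC_n^{-1/2})}-\sqrt{\by}\|^2$, whose value is $0$ exactly when $\cA(\bC_n^{-1/2}\bSigma\bC_n^{-1/2})=\by$, and $\bS_n=\bC_n^{1/2}\bS\bC_n^{1/2}$ attains this. Uniqueness of the minimizer then reduces to injectivity of $\cA$ on PSD matrices, which I would draw from the $\ell_2/\ell_1$ restricted isometry property of Appendix~\ref{subsec:ripcond}: for $\bx_i\iid N(\bzero,\bI)$ and $n\gtrsim dr$, the RIP holds with constant governed by $c_1$ on an event of probability $1-\exp(-c_2 n)$. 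On the same event $\bC_n\in\bbS_{++}^d$ concentrates near $\bI$, so $\bS_n$ inherits eigenvalue bounds comparable to $[m,M]$, which I will use freely below.

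For the convergence bound my strategy is the standard two ingredients for geodesically convex optimization: (i) a quadratic-growth/strong-convexity estimate for $F$ near $\bS_n$, and (ii) a one-step descent estimate for the step-size-$1$ iteration, which together give geometric decay of the suboptimality gap that I then convert into $\dbw^2$. Writing $\bX=y\,\bz\bz^\top$ in whitened coordinates $\bz=\bC_n^{-1/2}\bx$, so that $y=\bz^\top\bS_n\bz$ and $\frac1n\sum_i\bz_i\bz_i^\top=\bI$, the rank-one identity $\dbw^2(\bz\bz^\top,\bSigma)=\|\bz\|^2+\Tr\bSigma-2\sqrt{\bz^\top\bSigma\bz}$ gives $F(\bSigma)=\mathrm{const}+\tfrac12\Tr\bSigma-\E_{\bz}\sqrt{\bz^\top\bS_n\bz}\,\sqrt{\bz^\top\bSigma\bz}$, with BW gradient $\nabla F(\bSigma)=\bI-\tilde T(\bSigma)$ as in the text. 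The heart of the proof is a two-sided bound on the Hessian of $F$ near $\bS_n$: the Euclidean second variation in a tangent direction $\bV$ is of order $\E_{\bz}\frac{(\bz^\top\bV\bz)^2}{\bz^\top\bS_n\bz}$, which I would bound below using the RIP lower bound $\E(\bz^\top\bV\bz)^2\gtrsim\|\bV\|_F^2$ and bound above by controlling the negative moment of $\bz^\top\bS_n\bz$. This is exactly where $r\ge 3$ enters: since $\bz^\top\bS_n\bz$ is a weighted $\chi^2$ with $r$ degrees of freedom, its density near $0$ scales like $t^{r/2-1}$, so $\E_{\bz}[(\bz^\top\bS_n\bz)^{-1}]<\infty$ iff $r/2>1$, i.e.\ $r\ge 3$; the resulting constant is packaged into the $\bS$-dependent quantity $\beta$. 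Passing from the Euclidean to the BW metric (via the Lyapunov operator at $\bSigma$) contributes the eigenvalue factors, yielding a strong-convexity constant of order $c_1 m/M^{3/2}=\Theta(\cC)$ together with $m\le\lambda_r(\bS)$ and $\lambda_1(\bS)\le M$.

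With these Hessian bounds I would prove a descent estimate for $\bSigma_{k+1}=\tilde T(\bSigma_k)\bSigma_k\tilde T(\bSigma_k)$, pairing the smoothness (upper-Hessian) bound with the gradient-domination inequality implied by strong convexity to get $F(\bSigma_{k+1})-F(\bS_n)\le(1-\cC)\,(F(\bSigma_k)-F(\bS_n))$; iterating gives $F(\bSigma_k)-F(\bS_n)\le(1-\cC)^k(F(\bSigma_0)-F(\bS_n))$, and the quadratic-growth inequality upgrades this to the stated bound on $\dbw^2(\bU_k\bU_k^\top,\bS_n)$. Two feasibility facts must be maintained along the iterates: they must stay in the basin where the Hessian bounds hold, and they must retain rank $r$ with eigenvalues of order $m$ and $M$ (so that $\Sp(\bSigma_k)=\Sp(\bS_n)$ and the $1/\sqrt{\Tr(\bX\bSigma_k)}$ terms remain controlled). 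The explicit threshold $F(\bU_0\bU_0^\top)-F(\bS_n)\le \frac{c_1^{5}m^{8}r^{2}\beta^{10}}{6^{5}M^{15/2}d^{5}}$ together with $\lambda_1(\bU_0\bU_0^\top)\le M$ is calibrated precisely so that the sublevel set it defines lies in this good neighborhood, with rank preserved for step size $1$ by the remark that $\rank(\sum_i\bx_i\bx_i^\top)=d$.

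The step I expect to be the main obstacle is the Hessian control at the rank-deficient point $\bS_n$. Unlike the full-rank barycenter theory of \cite{chewi2020gradient,altschuler2021averaging}, the averaged measures $y_i\bx_i\bx_i^\top$ are rank one and $\bSigma\mapsto1/\sqrt{\Tr(\bX\bSigma)}$ is singular on the null space of $\bX$, so neither strong convexity nor smoothness is available off the shelf. Establishing both with explicit dimension-dependent constants requires the negative-moment and anti-concentration estimates for $\bz^\top\bS_n\bz$ that fail for $r\le 2$, and a careful decomposition of tangent directions into components along and transverse to $\Sp(\bS_n)$ using $\bP_{\bSigma}$ and $\bP_{\bSigma}^\perp$. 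A further subtlety is that the empirical negative moment $\frac1n\sum_i(\bz_i^\top\bS_n\bz_i)^{-1}$ has heavy tails (its population second moment is finite only for $r\ge 5$), so its high-probability control, rather than a routine sub-Gaussian concentration, is what ultimately fixes the basin size and the powers of $d$, $r$, and $\beta$ in the initialization condition.
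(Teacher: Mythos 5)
Your proposal follows essentially the same route as the paper's own proof: a high-probability $\ell_2/\ell_1$-RIP event, Euclidean strong convexity of $F$ (this, rather than your injectivity-of-$\cA$ argument, is how the paper gets uniqueness in item 1, and it sidesteps the issue that differences of PSD matrices need not be low rank), a local geodesic strong-convexity estimate near $\bS_n$ whose constants hinge on negative moments of the rank-$r$ quadratic form $\bz^\top\bS_n\bz$ and hence force $r\geq 3$, a step-size-one descent lemma from nonnegative curvature, and the standard strong-convexity-plus-descent iteration kept inside the basin carved out by the initialization threshold. The only real difference is packaging: where you propose two-sided Hessian bounds, the paper controls the extra BW-geodesic term $\langle(\bT-\bI)\bSigma_0(\bT-\bI),\nabla F(\bSigma_0)\rangle$ via a gradient-norm smoothness bound $\|\nabla F(\bSigma)\|_F\lesssim\|\bSigma-\bS\|_F^{1/2}$ and absorbs the numerator--denominator coupling in the Hessian lower bound into a bounded whitened vector whose covariance lower bound is the constant $\beta$ — the same mechanisms you identify, executed in a slightly different order.
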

This is the first result for convergence in Bures-Wassersten distance in the literature. The constant $\cC$ is the local strong geodesic convexity constant seen in \eqref{eq:locsc}. 
We note that this amounts to showing that our new energy given by \eqref{eq:lsmatrec_new} has a positive definite Hessian in a neighborhood around $\bS$. A couple of remarks are in order to discuss two issues that arise with the analysis of our method: initialization and the rank constraint on $\bS$. In practice, we observe convergence to $\bS$ from random intialization, but do not currently have a proof of this fact.

\begin{rem}
	We note that the convergence bound in Theorem \ref{thm:main} is local. A few procedures can be used to initialize in the correct neighborhood. First, under the Gaussian assumption, $\E y (\bx\bx^\top-\bI) = 2 \bS$, and so one could use a rank $r$ approximation of $\frac{1}{2n} \sum_{i=1}^n y_i (\bx_i \bx_i^\top - \bI)$ to initialize the gradient descent. This approximation can be computed in $O(ndr)$ time with the power method. A finite sample approximation result follows from concentration of the fourth moment tensor, see \cite[Theorem 4.13]{diakonikolas2019robust} for details. 
	Another path to initialization would be to use gradient descent on full rank PSD matrices in the first stage since, as we show in the Appendix, the function \eqref{eq:bwmr} is convex over PD matrices. This would allow one to recover a good approximation to $\bS$, and take a rank $r$ approximation of it, and then run \BW\ gradient descent from there.
	The downside of this method is that it takes complexity $O(nd^2)$ to compute, but it would not need concentration of the fourth moment tensor. Also, the gradient descent procedure with overspecified rank converges sublinearly. 
\end{rem}

\begin{rem}
	The result holds for $\rank(\bS) \geq 3$. This is due to a smoothness bound that requires an expectation of the form $\E \sqrt{1 + (x_{d+1}^2 + \dots + x_d^2)/(x_1^2 + \dots + x_r)^2}$ for $x_i \overset{i.i.d.}{\sim} N(0,1)$ random variables. In order for $\E 1/(x_1^2 + \dots + x_r)^2$ to exist, we need $r \geq 3$. In practice, and as we show in the experiments, the method still succeeds much of the time for $r=1, 2$. To have a theoretically guaranteed method in these settings, one can use the regularized \BW\ gradient descent method of Section \ref{subsec:regalgo}. 
\end{rem}

We give a sketch of the proof of Theorem \ref{thm:main}.
\begin{itemize}
	\item We first show that the energy $F$ in \eqref{eq:fcnlemp} is Euclidean strongly convex over $\bbS_+^d$ and that $\bS_n$ is the unique minimizer with high probability. This result uses the $\ell_2/\ell_1$-RIP condition of \cite{chen2015exact} (or restricted uniform boundedness condition of \cite{cai2015rop}).
	\item We then prove a smoothness result: $\|\nabla F(\bSigma)\| \lesssim \|\bSigma - \bS\|_F^{1/2}$ for rank $r$ matrices $\bSigma$.
	\item Using this smoothness result, we are able so show that $F$ is locally-geodesically convex (with respect to \BW\ geodesics) around $\bS_n$ in $\bbS_+^{d, r}$.
	\item Local strong convexity along with a geodesic smoothness result yields the local linear convergence result.
\end{itemize}


We also include the following theorem on the convergence of \BW SGD. This guarantees a slow rate of convergence for \BW SGD with gradient given by \eqref{eq:gradss}.
\begin{thm}\label{thm:BWSGD}
	Suppose that we observe $y_i=\langle \bx_i \bx_i^\top, \bS\rangle$, $i=1, \dots, n$, where $\bx_i$ and $S$ satisfy Assumption~\ref{assump:specgauss}. Suppose that we run single sample streaming \BW SGD, which uses gradient \eqref{eq:gradss}, for $n$ iterations with step size $1/\sqrt{n}$. Then, for a constant $c_1$, if $n \gtrsim dr$, with probability at least $1-\exp(-c_1n)$,
\begin{equation}
	\min_{k=1, \dots, n} \E  \| \nabla F(\bSigma_k)\|^2_{\bSigma_k}  = O\big(n^{-1/2}\big),
	\end{equation}
	where $\|\cdot\|_{\bSigma}^2 = \E_{\bz \sim N(\bzero, \bSigma)} \|\cdot \bz\|_2^2$ is the norm induced by the \BW\ Riemannian metric.
\end{thm}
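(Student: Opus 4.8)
The plan is to recognize Theorem~\ref{thm:BWSGD} as an instance of nonconvex stochastic gradient descent on the \BW\ manifold and to prove it via the standard telescoping argument adapted to this Riemannian setting. Since $\min_{k}\E\|\gradF(\bSigma_k)\|^2_{\bSigma_k}$ is dominated by the average, it suffices to bound $\frac1n\sum_{k=1}^n \E\|\gradF(\bSigma_k)\|^2_{\bSigma_k}$. Three ingredients drive the argument: (i) a descent inequality comparing $F(\bSigma_{k+1})$ to $F(\bSigma_k)$ along the \BW\ geodesic determined by the stochastic gradient; (ii) conditional unbiasedness of the single-sample gradient $\bG_k$ in~\eqref{eq:gradss}; and (iii) a uniform bound on the conditional second moment $\E[\|\bG_k\|^2_{\bSigma_k}\mid\bSigma_k]$.

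For (ii), I would condition on $\bSigma_k$, which is a function of the data seen before iteration $k$. Since the fresh sample $\bX_k$ is independent of $\bSigma_k$ and drawn from the distribution defining $F$, comparing~\eqref{eq:gradss} with the formula $\gradF(\bSigma_0) = \bI - \E_{\bX\sim Q}\bX/\sqrt{\Tr(\bX\bSigma_0)}$ gives $\E[\bG_k\mid\bSigma_k] = \gradF(\bSigma_k)$. Combined with the descent inequality of step (i), which takes the form
\[
\E[F(\bSigma_{k+1})\mid\bSigma_k] \;\le\; F(\bSigma_k) - \eta_k\|\gradF(\bSigma_k)\|^2_{\bSigma_k} + C\eta_k^2\,\E[\|\bG_k\|^2_{\bSigma_k}\mid\bSigma_k],
\]
summing over $k=1,\dots,n$, taking total expectations, and inserting the constant step $\eta_k = n^{-1/2}$ yields
\[
\frac{1}{\sqrt n}\sum_{k=1}^n \E\|\gradF(\bSigma_k)\|^2_{\bSigma_k} \;\le\; F(\bSigma_1) - \inf F + \frac{C}{n}\sum_{k=1}^n \E\|\bG_k\|^2_{\bSigma_k}.
\]
If the second moment is uniformly bounded by some $G^2$, the right-hand side is $O(1)$, and dividing by $\sqrt n$ gives the claimed $O(n^{-1/2})$ rate.

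For (iii), the apparent danger is that $\Tr(\bX_k\bSigma_k)$ in the denominator of $\bG_k$ could be small and make the gradient explode. The crucial observation is that, because each $\bX_k = y_k\bz_k\bz_k^\top$ is rank one (with $\bz_k = \bC_n^{-1/2}\bx_k$), the denominator cancels in the \BW\ norm: a direct computation gives $\|\bX_k/\sqrt{\Tr(\bX_k\bSigma_k)}\|^2_{\bSigma_k} = y_k\|\bz_k\|^2$, independent of $\bSigma_k$. Hence $\E[\|\bG_k\|^2_{\bSigma_k}\mid\bSigma_k]$ reduces to controlling $\E[y_k\|\bz_k\|^2]$, together with $\|\bI\|^2_{\bSigma_k} = \Tr(\bSigma_k)$ and a cross term of size $\sqrt{y_k\,\bz_k^\top\bSigma_k\bz_k}$. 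I would bound these on the high-probability event $\{n\gtrsim dr\}$ where $\bC_n$ concentrates around $\bI$, so that $\|\bz_k\|^2 \lesssim \|\bx_k\|^2$ and $y_k \le M\|\bx_k\|^2$, and then invoke Gaussian fourth-moment estimates.

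The main obstacles are two. First, establishing the descent inequality of step (i) with a finite constant $C$: because the \BW\ manifold is positively curved and the update is the exponential map $\bSigma_{k+1} = (\bI - \eta_k\bG_k)\bSigma_k(\bI-\eta_k\bG_k)$, I would derive the quadratic upper bound from a second-order expansion of $F$ along this path, leveraging the smoothness of $F$ already exploited in the proof of Theorem~\ref{thm:main} rather than any global Lipschitz-gradient hypothesis. Second, the term $\Tr(\bSigma_k)$ in the second-moment bound requires the iterates to stay in a bounded region; I would close this by showing that with the small step $\eta_k = n^{-1/2}$ the trajectory remains in a trace ball of controlled radius over the $n$ iterations on the same high-probability event, so that $G^2 = O(1)$ uniformly. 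Balancing these two bounded quantities against the $1/\sqrt n$ step size is exactly what produces the stated rate.
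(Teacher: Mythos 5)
Your proposal is correct and follows the same overall template as the paper, which explicitly adapts the Ghadimi--Lan nonconvex SGD analysis: bound the minimum by the average, telescope a per-step inequality, use conditional unbiasedness of the single-sample gradient, and balance with step size $\eta = n^{-1/2}$. The one substantive difference is how the per-step inequality is obtained. The paper does not invoke smoothness of $F$ at all for this theorem: it uses nonnegative curvature of Wasserstein space to write $W_2^2(\gamma_{\bSigma_{k+1}}, \gamma_{\bX}) \le \| (\bI - T_{\bSigma_k \to \bX}) + \eta\,(T_{\bSigma_k \to \bX_k} - \bI)\|^2_{\bSigma_k}$ for every target $\bX$, expands the square exactly, and averages over $\bX \sim Q$; this yields your inequality (i) with the quadratic penalty being precisely $\eta^2 \dbw^2(\bSigma_k, \bX_k) = \eta^2\|\bG_k\|^2_{\bSigma_k}$ and constant $C$ effectively equal to one, with no Hessian or Lipschitz-gradient argument needed. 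Your route through a second-order expansion of $F$ is workable but subtler than you suggest: the paper's descent lemma (Lemma~\ref{lem:descent}) is stated only for the full-gradient step, and $F$ need not be twice differentiable at rank-deficient iterates, so the clean way to justify your step (i) is exactly the curvature comparison the paper uses (geodesic $1$-smoothness of $\dbw^2(\cdot,\bX)$ along generalized geodesics is itself a consequence of nonnegative curvature). On the other hand, your treatment of the second moment is more explicit than the paper's: your cancellation $\|\bX_k/\sqrt{\Tr(\bX_k\bSigma_k)}\|^2_{\bSigma_k} = y_k\|\bz_k\|^2$ is correct, giving $\|\bG_k\|^2_{\bSigma_k} = \Tr(\bSigma_k) - 2\sqrt{y_k \bz_k^\top \bSigma_k \bz_k} + y_k\|\bz_k\|^2$, whereas the paper simply asserts a uniform conditional bound $\E[\dbw^2(\bSigma_{k-1},\bX_k)\mid \bSigma_{k-1}] \le \sigma^2$ without addressing the $\Tr(\bSigma_k)$ term; your proposed trace-ball argument (or, alternatively, noting that this conditional expectation equals $2F(\bSigma_{k-1})$, which the telescoping itself keeps controlled) genuinely fills a gap that the paper leaves implicit.
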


This states that the best iterate of the \BW SGD sequence outputs an approximate stationary point with respect to the norm induced by the \BW\ Riemannian metric. However, we cannot guarantee that this stationary point is the global minimum. 
We comment that we do not tend to run into spurious local minima in practice, and future work should go into studying this fact. While we do not have a justification for this, we give a theorem on the $r=1$ case, where we show that the energy function has no local minima in the asymptotic limit.  The proof of this theorem is left to the supplementary material.
\begin{thm}
	Consider the observation model with the rank one matrix $\bS = \bv\bv^\top$  and $y = \langle \bx\bx^\top, \bS \rangle$. Then, the only fixed points of the population version of \eqref{eq:frgd} (which corresponds to gradient descent on \eqref{eq:fcnlemp} with the sum replaced by an integral)
	are $\bv$ or orthogonal to $\bv$. In particular, this implies that population \BW\ gradient descent  from any initialization such that $\bu_0 \not \perp \bv$ converges to $\bv$.
\end{thm}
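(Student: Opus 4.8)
The plan is to reduce the stationarity condition to a one-dimensional equation in the correlation $\rho = \langle \hat{\bv}, \hat{\bu}\rangle$ and then solve it in closed form. Write $\bS = \bv\bv^\top$, so $y = \langle \bx\bx^\top,\bS\rangle = (\bx^\top\bv)^2$ and the matrices being averaged are $\bX = y\,\bx\bx^\top = (\bx^\top\bv)^2\,\bx\bx^\top$ with $\bx\sim N(\bzero,\bI)$. Applying the gradient formula from the excerpt at $\bSigma = \bu\bu^\top$, and using $\Tr(\bX\bu\bu^\top) = (\bx^\top\bv)^2(\bx^\top\bu)^2$, I obtain
\begin{equation*}
\nabla F(\bu\bu^\top)\,\bu = \bu - \E\big[\,|\bx^\top\bv|\,\sign(\bx^\top\bu)\,\bx\,\big].
\end{equation*}
The iteration \eqref{eq:frgd} sends $\bu \mapsto (\bI-\eta\nabla F(\bu\bu^\top))\bu$, so when $\nabla F(\bu\bu^\top)\bu$ is parallel to $\bu$ the direction is preserved (only the length changes); the fixed points of the rank-one matrix $\bu\bu^\top$ are exactly those directions together with the matching length. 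Hence I first characterize the directions for which $\nabla F(\bu\bu^\top)\bu \parallel \bu$.

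Next I would exploit the rotational invariance of $N(\bzero,\bI)$ to reduce to the plane $\spann\{\hat{\bv},\hat{\bu}\}$: the scalar $|\bx^\top\bv|\sign(\bx^\top\bu)$ depends only on the projection of $\bx$ onto this plane, and the orthogonal part of $\bx$ is independent and mean zero, so it drops out of the expectation. Choosing $\boldsymbol{e}_1 = \hat{\bv}$ and $\boldsymbol{e}_2$ spanning the plane, and setting $g_1 = \bx^\top\boldsymbol{e}_1$, $g_2 = \bx^\top\hat{\bu}$ (a standard bivariate Gaussian pair with correlation $\rho$), the expectation becomes $\|\bv\|\,(A(\rho)\boldsymbol{e}_1 + B(\rho)\boldsymbol{e}_2)$, where $A(\rho) = \E[g_1|g_1|\sign(g_2)]$ and $B$ is the companion integral along $\boldsymbol{e}_2$. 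The parallelism condition then collapses, after eliminating $B$ via $B(\rho)\sqrt{1-\rho^2} = \E[|g_1||g_2|]-\rho A(\rho)$, to the single scalar equation $A(\rho) = \rho\,\E[|g_1||g_2|]$.

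The crux is evaluating these Gaussian integrals exactly. Conditioning on $g_1$ gives $\E[\sign(g_2)\mid g_1] = 2\Phi\big(\rho g_1/\sqrt{1-\rho^2}\big)-1$; differentiating $A$ under the integral sign in the variable $c = \rho/\sqrt{1-\rho^2}$ reduces it to an elementary integral, and together with the classical mean-absolute-product identity this yields
\begin{equation*}
A(\rho) = \tfrac{2}{\pi}\big(\rho\sqrt{1-\rho^2}+\arcsin\rho\big), \qquad \E[|g_1||g_2|] = \tfrac{2}{\pi}\big(\sqrt{1-\rho^2}+\rho\arcsin\rho\big).
\end{equation*}
Substituting into $A(\rho) = \rho\,\E[|g_1||g_2|]$, every term cancels except $(1-\rho^2)\arcsin\rho = 0$. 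Thus the only stationary directions are $\rho = 0$ (i.e. $\bu\perp\bv$) and $\rho = \pm1$ (i.e. $\bu\parallel\bv$); since $A(\pm1)=1$, the parallel fixed point must have length $\|\bv\|$, forcing $\bu\bu^\top = \bv\bv^\top$, which establishes the characterization of fixed points.

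For the convergence claim I would track the angle $\theta$ between $\bu$ and $\bv$, so $\rho = \cos\theta$. Observing that the in-plane direction orthogonal to $\bv$ is preserved by \eqref{eq:frgd}, the entire trajectory stays in the fixed $2$-plane $\spann\{\bv,\bu_0\}$. Writing the flow in polar coordinates $(R,\theta)$ and inserting the closed forms above (with $B(\rho) = \tfrac{2}{\pi}(1-\rho^2)$, obtained identically), the radial contributions cancel and one finds
\begin{equation*}
\dot\theta = -\frac{2\|\bv\|}{\pi R}\,\sin\theta\,\Big(\tfrac{\pi}{2}-\theta\Big),
\end{equation*}
which is strictly negative on $(0,\pi/2)$ and vanishes only at $\theta\in\{0,\pi/2\}$. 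Hence the orthogonal set $\theta=\pi/2$ is repelling and $\theta=0$ is attracting: any $\bu_0\not\perp\bv$ has $\theta_0<\pi/2$ and flows monotonically to $\theta=0$, after which the radial equation drives $R\to\|\bv\|$, giving $\bu\to\bv$. The main obstacle is the exact integral evaluation in the third paragraph; once those closed forms are in hand, both the fixed-point classification and the monotone angle dynamics are short. For the discrete iteration one repeats the same computation directly, or argues through the small-step (gradient-flow) limit.
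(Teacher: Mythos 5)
Your proposal is correct, and it reaches the theorem by a genuinely different route than the paper's own proof. Both arguments start from the same stationarity condition $\E[\,|\bx^\top\bv|\,\sign(\bx^\top\bu)\,\bx\,]=\bu$ and both use rotational invariance to reduce to the plane $\spann\{\bv,\bu\}$, but they diverge from there. The paper argues qualitatively: it verifies $\bu=\bv$ directly, computes that orthogonal fixed points must have the specific length $2\|\bv\|/\pi$, and excludes all remaining directions by a symmetry/half-plane argument showing that the coordinate of the expectation transverse to $\bu$ is strictly positive whenever $\bu$ is neither parallel nor orthogonal to $\bv$ --- no closed-form integrals are needed, only the sign symmetry $(x_1,x_2)\mapsto(-x_1,-x_2)$. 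You instead evaluate the bivariate Gaussian integrals exactly --- your formulas $A(\rho)=\tfrac{2}{\pi}(\rho\sqrt{1-\rho^2}+\arcsin\rho)$, $\E|g_1||g_2|=\tfrac{2}{\pi}(\sqrt{1-\rho^2}+\rho\arcsin\rho)$, $B(\rho)=\tfrac{2}{\pi}(1-\rho^2)$ all check out (e.g.\ via $A'$ in the variable $c=\rho/\sqrt{1-\rho^2}$) --- which collapses stationarity of the direction to $(1-\rho^2)\arcsin\rho=0$, and your angular equation $\dot\theta=-\tfrac{2\|\bv\|}{\pi R}\sin\theta(\tfrac{\pi}{2}-\theta)$ is likewise correct. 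What your route buys is quantitative information the paper does not have: an explicit one-dimensional equation for all stationary directions, and explicit angle dynamics showing the orthogonal set is repelling with a rate, which is sharper than the paper's brief appeal to ``monotonicity of gradient descent implies convergence to a fixed point.'' What the paper's route buys is robustness: the positivity argument uses only the symmetry of the Gaussian, not exact integration. One caveat on your side: the theorem concerns the discrete iteration, and you defer discrete-time convergence to ``the same computation.'' It does go through --- for the map $\bu^+=(1-\eta)\bu+\eta\|\bv\|(A\boldsymbol{e}_1+B\boldsymbol{e}_2)$ the angle comparison reduces to the same sign condition $B\cos\theta-A\sin\theta=-\tfrac{2}{\pi}\sin\theta(\tfrac{\pi}{2}-\theta)<0$, and the monotone bounded angle sequence can only accumulate at a stationary angle --- but you should spell this out rather than leave it implicit (the paper's own convergence step is comparably terse, so this is a polish issue, not a gap). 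A final trivial slip: $A(-1)=-1$, not $1$, but since $\rho=-1$ then forces $\bu=-\bv$ and $\bu\bu^\top=\bv\bv^\top$, nothing is affected.
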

For the case of general $r$, we believe that a similar result holds, although we have not yet been able to show it. Furthermore, we also believe that these results can be extended to high probability results in the finite sample case.

\begin{figure}[h!]
    \centering
    \vspace{-.0cm}
    \includegraphics[width = 0.8 \columnwidth]{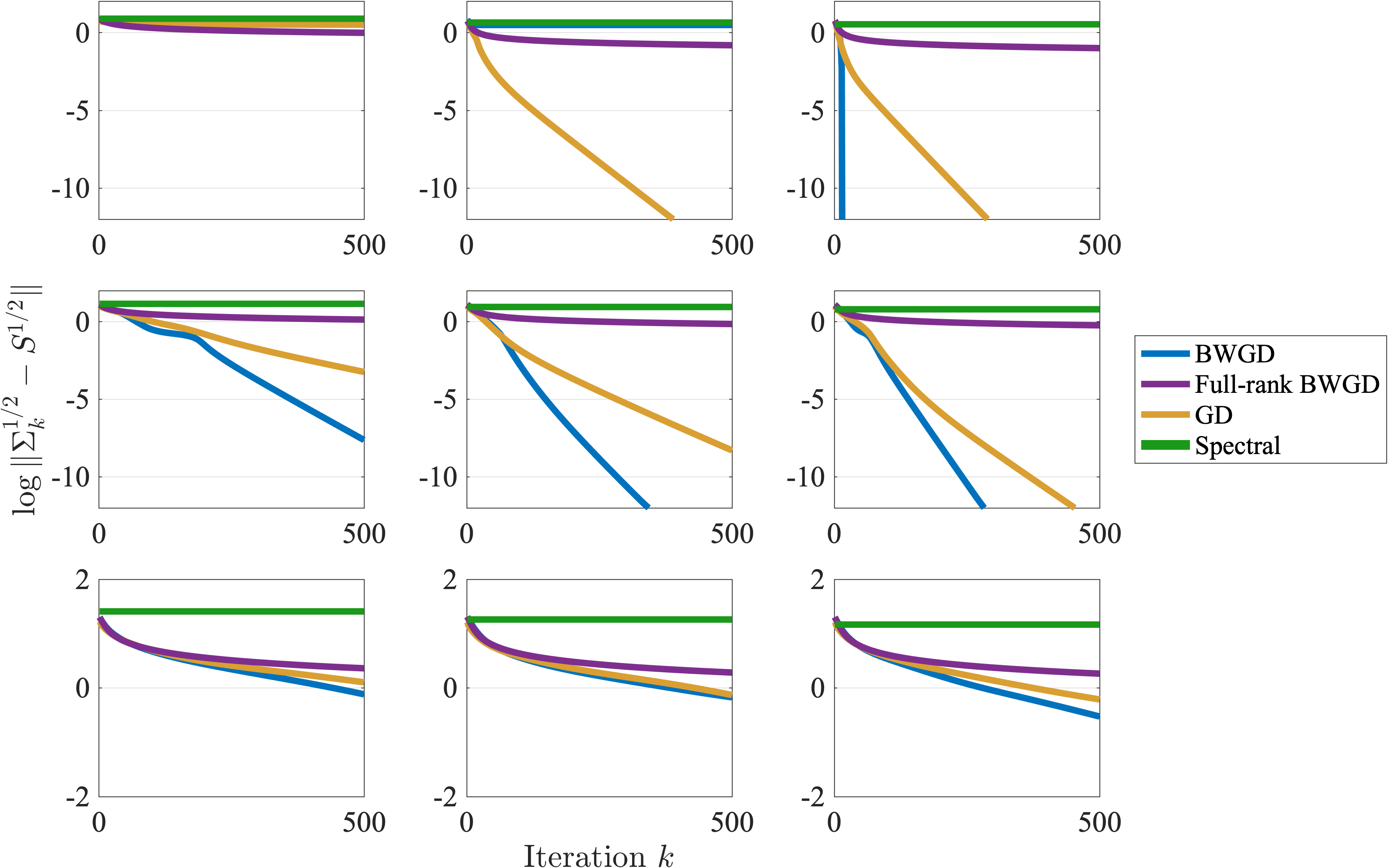}
    \caption{Convergence of \BW\ gradient descent \eqref{eq:frgd} with the full gradient and GD \citep{li2019nonconvex}.Here, $d=32$, and down the columns we use $r=1, 4, 16$, respectively. Across the rows we vary the number of points, with $n=3dr$, $n=10dr$, and $n=20dr$, respectively. As we can see, for low to moderate ranks, the \BW\ gradient descent method converges much faster than the standard GD method.}
    \label{fig:conv1}
\end{figure}

\begin{figure}[h!]
    \centering
    \vspace{-.0cm}
    \includegraphics[width = 0.8 \columnwidth]{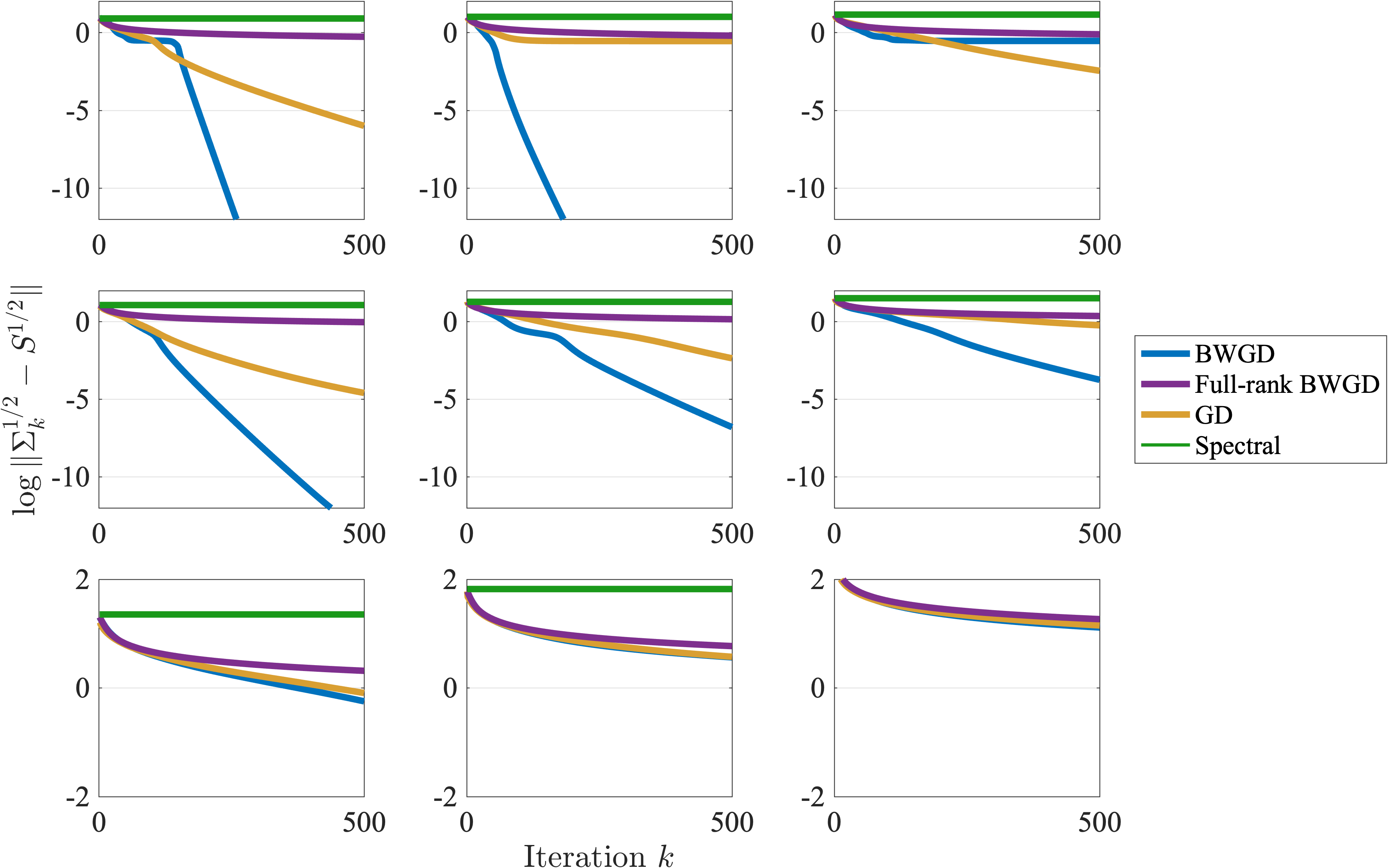}
    \caption{Convergence of \BW\ gradient descent \eqref{eq:frgd} with the full gradient and GD \citep{li2019nonconvex}. Here, $d=32$, and down the columns we use $r=2, 4, 16$, respectively. Across the rows we vary scale factor $\alpha$, with $\alpha = 0, 1, 2$, respectively. As we can see, the \BW\ gradient descent method converges much faster than the standard GD method.}
    \label{fig:conv2}
\end{figure}



\section{Experiments}
\label{sec:exp}

Here we present some numerical simulations that demonstrate the effectiveness of our method. All experiments were run on a 2020 Macbook Pro with a quad core CPU and 16 GB of RAM.

\subsection{Synthetic Experiments}

We begin with some experiments on generated datasets to better understand the performance of our method. Since our main results focus on the performance of gradient descent rather than stochastic gradient descent, we focus on its performance here.

The methods we compare are rank $r$ \BW\ gradient descent, full rank \BW\ gradient descent, the nonconvex Euclidean Gradient Descent (GD) of \cite{li2019nonconvex}, and a spectral method, which takes a low-rank approximation to $\bS_n = \frac{1}{2n}\sum_{i=1}^ny_i(\bx_i \bx_i^\top - \bI)$ since $\frac{1}{2}\E y^2 (\bx\bx^\top - \bI) = \bS$~\citep{sedghi2016provable}. As an error metric, we compute $\|\bSigma_k^{1/2} - \bS^{1/2}\|_F= \Theta(\dbw(\bSigma_k, \bS))$. We do not compare with other matrix recovery algorithms since they are not guaranteed to work in the symmetric rank one projection setting. Also, we find the comparison with GD to be the most relevant, since \BW\ gradient descent and GD have comparable complexity and are both first-order methods. 

In our first experiment, we test the accuracy of the methods over time as we vary the rank of $\bS$ and the sample size. Since the spectral method is not iterative, we include it as a horizontal line. 
Figure \ref{fig:conv1} displays the results of this experiment. Across the rows we vary the number of points and down the columns we vary the rank of $\bS$. The fixed dimension is $d=32$, the ranks from top to bottom are $r=1, 4, 16$, and across the rows the number number points are $3dr$, $10dr$, and $20dr$, respectively. For each frame, we generate 20 datasets and run the four methods on them.
All methods are run with random initialization, where the entries of $\bU_0$ are i.i.d.~$N(0,1)$.
For $r=1$, we see that rank $r$ \BW\ gradient descent succeeds once the number of points is sufficiently large. Furthermore, the convergence when it is successful is extremely fast. For moderate ranks, rank $r$ \BW\ gradient descent converges faster than the previous GD method of \cite{li2019nonconvex}. 

In Figure \ref{fig:conv2}, we examine the performance of the methods under varying conditioning of $\bS$. We set $d=32$ and $n=5dr$ and vary $r$ as well as the conditioning of the matrix $S$. Here, $\bS = \bV \diag(r^\alpha, (r-1)^\alpha, \dots, 1^\alpha) \bV^\top$, where the entries of $\bV$ are i.i.d. $N(0,1)$ and $\alpha$ is a scale factor. Figure \ref{fig:conv2} displays the results on 20 randomly generated datasets per frame. The rows correspond to $r=2, 4$, and $16$ respectively. The columns correspond to $\alpha = 0, 1$ and $2$ respectively. Rank $r$ \BW\ gradient descent performs uniformly well throughout.

In Figure \ref{fig:samp} we show the dependence on sample size. Here, $d = 64$ and $r$ is varied from $1$ to $20$. The error of \BW\ gradient descent and GD after 200 iterations is shown for sample sizes of $d$, $2d$, $\dots$, $20d$ for each value of $r$. For each $r, n$ pair, we generate 20 datasets and compute the average error across them. The color indicates the average error value across these datasets. As we see, \BW\ gradient descent performs the best out of these methods. GD with linesearch is also competitive, but is more time consuming.

For the final synthetic experiment in Figure \ref{fig:highdim}, we demonstrate the scalability of \BW\ gradient descent to higher dimensions. We note that it scales much better in terms of actual computational time when compared with the Euclidean GD method of \cite{li2019nonconvex}.

\begin{figure}[h!]
    \centering
    \vspace{-.0cm}
    \includegraphics[width = 0.8 \columnwidth]{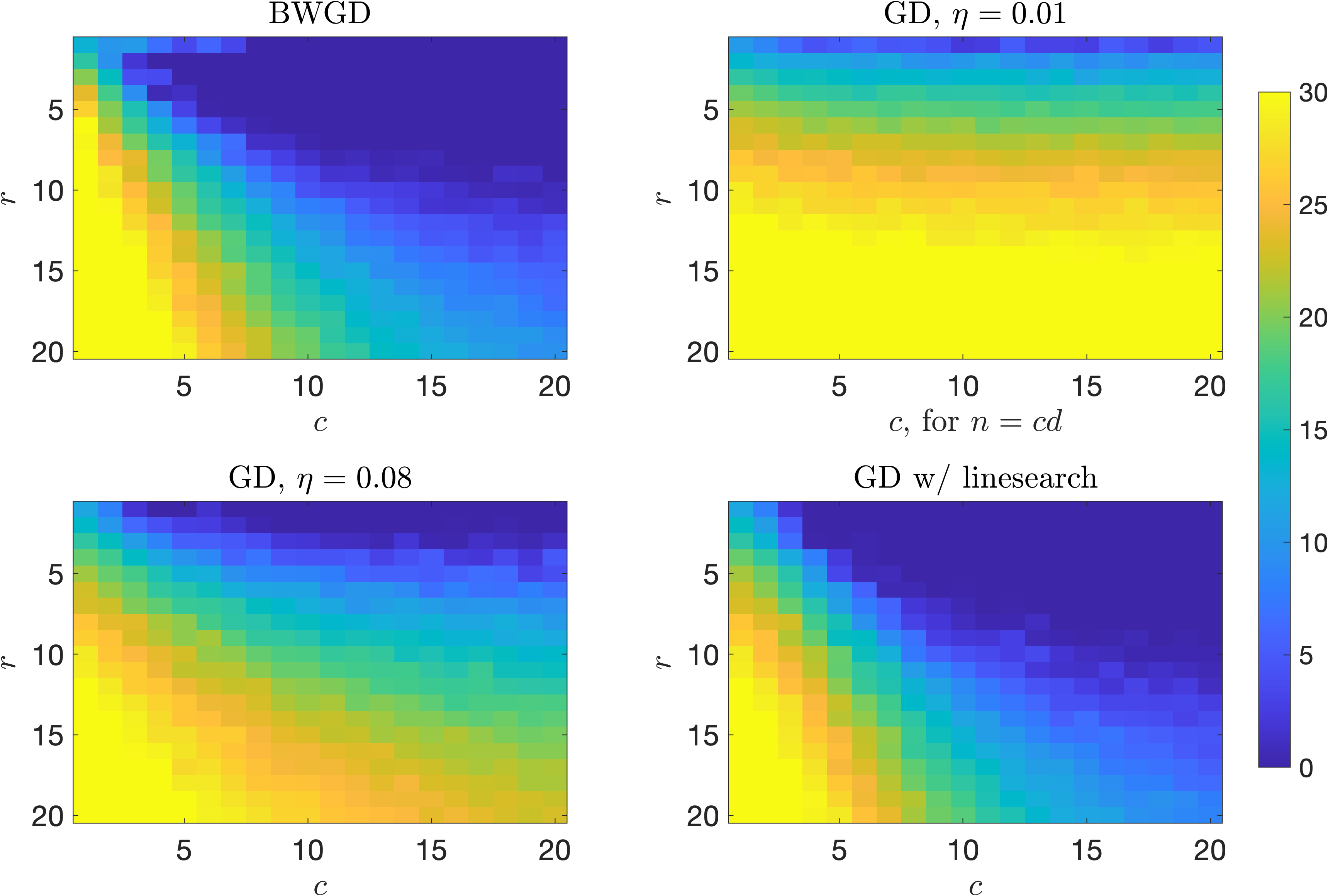}
    \caption{Convergence of \BW\ gradient descent \eqref{eq:frgd} with the full gradient and GD \citep{li2019nonconvex}. Here, $d=64$, and down the columns of each inset we use $r=1, \dots, 20$, respectively. Across the rows we vary the scale factor $c$, where $n = cd$. As we can see, the \BW\ gradient descent method converges much faster than the standard GD method with fixed step size, and performs on par with EGD with linesearch.}
    \label{fig:samp}
\end{figure}

\begin{figure}[h!]
    \centering
    \vspace{-.0cm}
    \includegraphics[width = 0.5 \columnwidth]{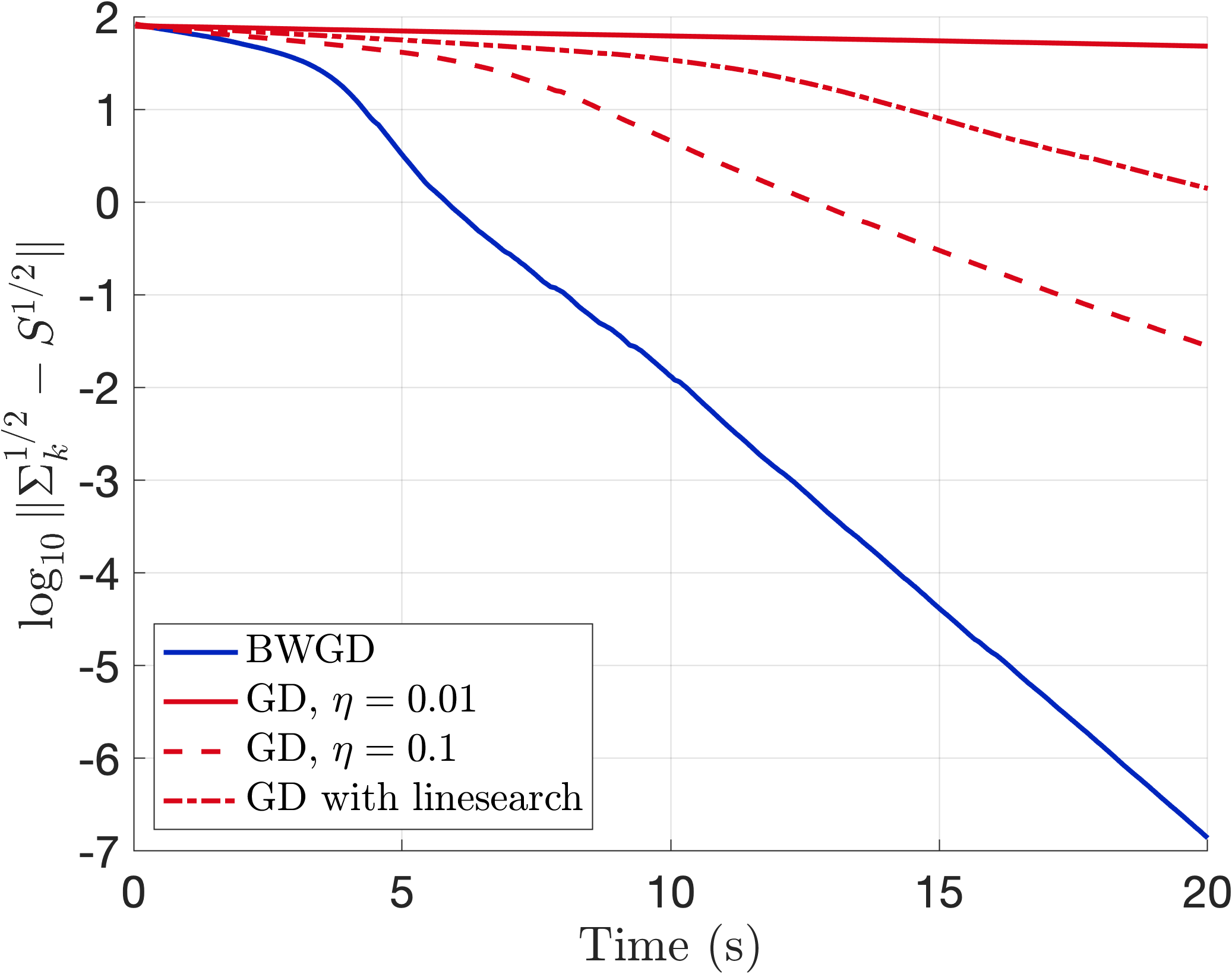}
    \caption{Convergence of \BW\ gradient descent \eqref{eq:frgd} and GD \citep{li2019nonconvex}. Here, $d=512$, $r=8$, and $n=3rd$. As we can see, the \BW\ gradient descent method converges much faster than the standard GD method with any choice of step sizes. The per-iteration convergence rate for \BW\ gradient descent is comparable to GD with linesearch, but doing linesearch increases computational burden.}
    \label{fig:highdim}
\end{figure}


\subsection{Real Data Experiment: Phase Retrieval}

Here we give the details of the experiment displayed in Figure~\ref{fig:pr}.
We replicate the phase retrieval experiments in \cite{candes2015phase}. In this paper, the authors study the nonconvex Wirtinger Flow algorithm. Since phase retrieval is equivalent to the recovery problem in \eqref{eq:matrecmeas} with a rank one complex $\bS$, we can apply our algorithm in this setting.

Here, we use $i = \sqrt{-1}$.
We use an image of Denali National Park, which is denoted by the 2-dimensional array $\bJ$ for each color band. In our simulated acquisition model, for $m = (u, v, \ell)$, we acquire data of the form
\begin{align}
    y_m &= \Big|  \sum_{j=1,k=1}^{j=d_1, k=d_2} \bJ_{jk} \bar{d}_\ell(j,k) e^{-i 2\pi (ju + kv)} \Big|^2.
\end{align}
Here, $d_\ell(j,k)\sim b_1b_2$, where $b_1$ is uniform on $\{1, -1, i, -i\}$, and $b_2$ takes values $\sqrt{2}/2$ with probability $4/5$ and $\sqrt{3}$ with probability $1/5$. The goal is to recover the image $\bJ$ from these measurements. We note that these measurements can be equivalently written as $y_m = \bF_{u,v,\ell}^{\flat \top} \bJ^\flat \bJ^{\flat \top} \bF_{u,v,\ell}^\flat,$
where $\cdot^\flat$ denotes the vectorization operation and $\bF$ is a matrix with entries $\bar{d}_\ell(j,k) e^{-i 2\pi (ju + kv)}$. This notation makes clear the connection with the original matrix recovery problem in \eqref{eq:matrecmeas}. We display the errors versus runtimes in Figure \ref{fig:pr2}. Despite the fact that this example has $r=1$, \BW\ gradient descent still efficiently recovers the underlying image, even though our current theory only works for $r \geq 3$. 

\begin{figure}[h!]
    \centering
    \includegraphics[width = .49\columnwidth]{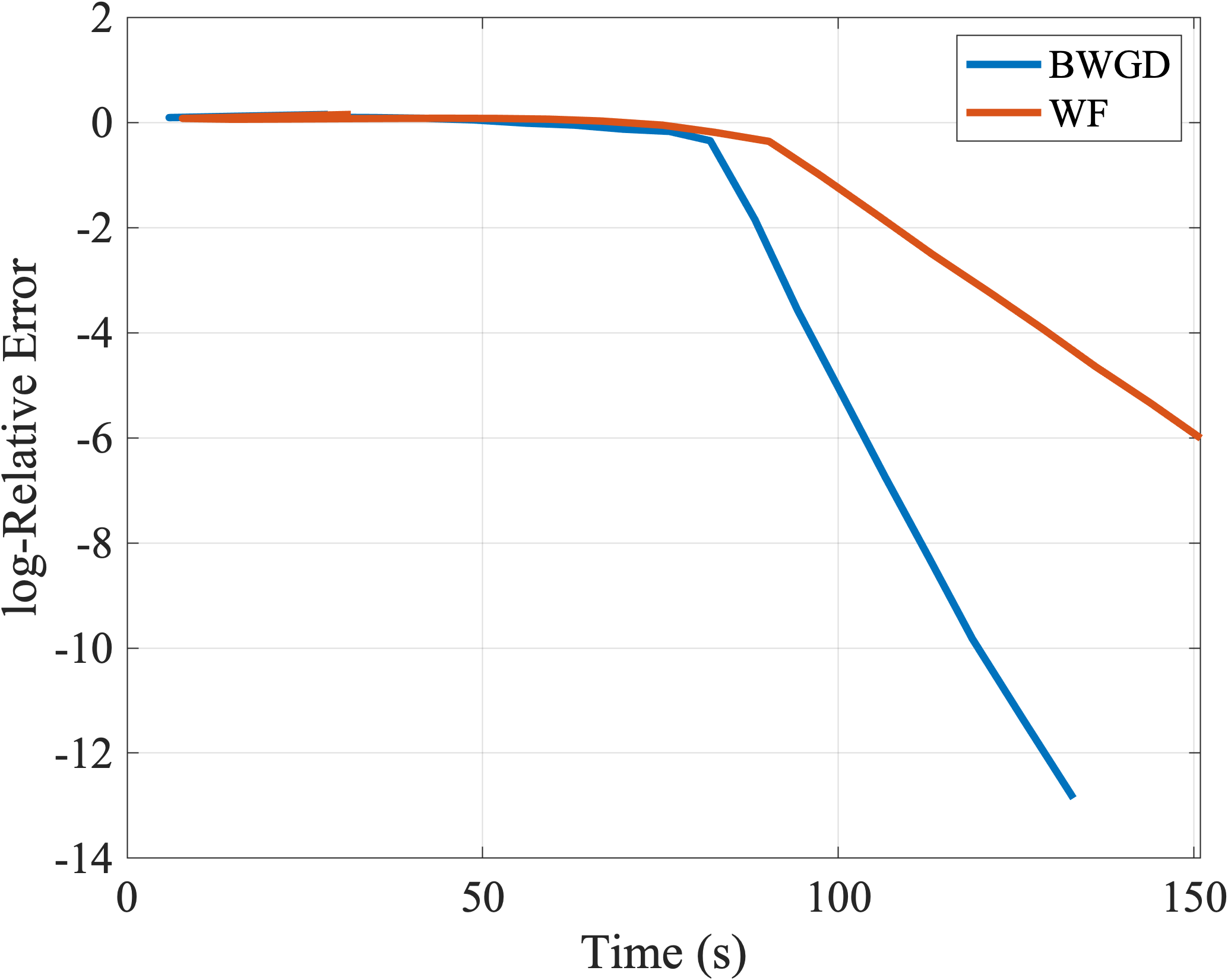}
    \caption{Error vs Runtime of \BW\ gradient descent (our method) and Wirtinger Flow~\cite{candes2015phase} on the phase retrieval experiment in Figure \ref{fig:pr}. As we can see, \BW\ gradient descent recovers the underlying image faster. Both methods are initialized with the power method as is done in \citep{candes2015phase}.}
    \label{fig:pr2}
\end{figure}

\section{Limitations}
\label{sec:lim}

There are a few notable limitations for the current work. First of all, the theory does not directly extend to the cases of $r=1$ and $r=2$, and we must resort instead to the regularized methods discussed in Section \ref{subsec:regalgo}. It is unclear whether or not this is a limitation of the methods or the analysis, although experiments indicate that the method works well for small dimensions in practice. Second, while our experiments indicate well-behaved energy landscapes, we are only able to prove local convergence results. Third, while we give the first optimization rates in terms of \BW\ distance for this problem, our constants are not optimized. Fourth, we assume that one knows the rank of $S$ in advance. This is not a large issue since our framework allows for one to pick any $r' \geq r$ and still recover $r$, albeit at a slower rate.

\section{Conclusion}

We have presented a novel connection between the \BW\ barycenter problem and low-rank matrix recovery from rank one measurements. We show that a novel energy minimization problem coincides with the barycenter minimization. This connection allows us to extend algorithms from the barycenter problem to the matrix recovery problem, giving new algorithms for recovering low-rank PSD matrices. Our methods are guaranteed to have local linear convergence in \BW\ distance, which is a stronger guarantee than existing methods.

This work leaves open many unexplored directions. For example, it would be interesting to show that the energy landscape does not exhibit spurious local minima. Beyond this, it would also be interesting to know when and how one can go beyond the RIP assumption, which currently relies on sub-Gaussianity of the sensing vectors. Finally, it would be interesting to connect these ideas to optimization landscapes for neural networks \citep{zhong2017recovery,du2019gradient,li2019nonconvex}.

\section{Acknowledgements}
PR is supported by NSF grants IIS-1838071, DMS-2022448, and CCF-2106377.

\bibliographystyle{plainnat}
\bibliography{refs}

\appendix

\section{Geometries of $\bbS_+^d$}

Since the \BW\ barycenter problem is inherently a geometric minimization problem over $\bbS_+^d$, we will quickly comment on how the choice of geometry effects optimization algorithms over this space.

There are many ways to define geometries over $\bbS_+^d$. For sake of comparison with previous methods for matrix recovery, we will compare \BW\ space to the Euclidean geometry over PSD matrices. Among other things, the choice of geometry gives us a way of defining defining distance minimizing paths, or \emph{geodesics}, over $\bbS_+^d$. 

Consider two matrices $\bSigma_0, \bSigma_1 \in \bbS_+^d$ such that $\rank(\bSigma_0 \bSigma_1) = \rank(\bSigma_0) =  \rank(\bSigma_1)$. This is a sufficient but not necessary condition to ensure that our following definition of \BW\ geodesic is well defined since the same map can work for transporting higher rank PSD matrices to lower rank matrices.
In any case, under this assumption, there exists a transport map from $\bSigma_0$ to $\bSigma_1$ given by
\begin{equation}
    \bT = \bSigma_1^{1/2} (\bSigma_1^{1/2} \bSigma_0 \bSigma_1^{1/2})^{-1/2} \bSigma_1^{1/2},
\end{equation}
where the inverse is actually a pseudoinverse and one can check that $T \bSigma_0 T = \bSigma_1$.
 In this case, the Euclidean and \BW\ geodesics $\bSigma_t:[0,1] \to \bbS_+^d$ are given by
\begin{align}
	\bSigma_t &= (1-t) \bSigma_0 + t \bSigma_1, \tag{\sf{EG}} \label{eq:eg}\\
	\bSigma_t &=(\bI + t(\bT - \bI))\bSigma_0(I + t(\bT - \bI)).\tag{\sf{BWG}} \label{eq:bwg}
\end{align}
The first choice of geodesic, the {Euclidean Geodesic} \eqref{eq:eg}, corresponds to the distance functional $\|\bSigma_0 - \bSigma_1\|_F$. The second choice of geodesic, the {\BW\ Geodesic} \eqref{eq:bwg}, corresponds to the \BW\ distance functional $\dbw(\bSigma_0, \bSigma_1)$.


We note that \eqref{eq:bwg} is equivalent to
\begin{align}\label{eq:buresgeo}
    \bSigma_t &=(I + t(\bT-\bI))\bSigma_0(I + t(\bT - \bI)) \\
     &= (\bSigma_0^{1/2} + t \bDelta)(\bSigma_0^{1/2} + t \bDelta)^\top.
\end{align}
where $\bDelta = (\bT - \bI)\bSigma_0^{1/2}$. 

One of the big differences in these two paths is that while the Euclidean geodesics are linear in $t$, which is a result of the underlying flatness of the space, the \BW\ geodesic contains terms that are quadratic in $t$.
This points to the fact that this choice of geometry adds \emph{curvature} to the space $\bbS_+^d$. 


If we restrict ourselves to rank $r$ PSD matrices, $\bbS_+^{d, r}$, the geodesic \eqref{eq:bwg} becomes 
\begin{align}
   \bSigma_t &= (I + t(T - I)\bU_0 \bU_0^\top  (I + t(T - I))^\top \\ \nonumber
   &= ( (1-t) \bU_0 + t \bU_1)   ((1-t)\bU_0 + t \bU_1)^\top,
\end{align}
where we use the fact that $\bT \bSigma_0 \bT =\bT \bU_0 \bU_0^\top \bT = \bSigma_1 = \bU_1 \bU_1^\top$. Note that there is an inherent rotational symmetry in the problem, since for any $R \in \R^{r \times r}$ such that $\bR \bR^\top = \bI$ and $\bU \bU^\top \in \bbS_+^{d,r}$, $\bU \bU^\top = \bU \bR \bR^\top U^\top$.

\section{Supplementary Proofs}

\subsection{Proof of Propositions \ref{prop:equiv} and \ref{prop:whitenedbary}}

\begin{proof}[Proof of Proposition \ref{prop:equiv}]
	
First, if $\frac{1}{n} \sum_i \bx_i \bx_i^\top = \bI$, then
	$$\frac{1}{n} \sum_i \cA(\bSigma)_i = \Tr(\bSigma).$$
Indeed, this follows from the fact that
\begin{equation}
	\frac{1}{n} \sum_i \cA(\bSigma)_i = \frac{1}{n} \sum_i \bx_i^\top \bSigma \bx_i = \Tr\left( \bSigma \frac{1}{n} \sum_i \bx_i \bx_i^\top\right) = \Tr(\bSigma).
\end{equation}
With this in mind, we expand the square in \eqref{eq:lsmatrec_new}
\begin{align}
	&\frac{1}{2n} \| \sqrt{\cA(\bSigma)} - \sqrt{y}\|_2^2 =\\ \nonumber 
	&\frac{1}{2n} \sum_i \cA(\bSigma)_i + \frac{1}{2n} \sum_i y_{\bI} - \frac{1}{n}\sum_i  \sqrt{y_i} \sqrt{\bx_i^\top \bSigma \bx_i} \\ \nonumber
	&= \frac{1}{2} \Tr(\bSigma) -  \frac{1}{n}\sum_i  \sqrt{y_i} \sqrt{\bx_i^\top \bSigma \bx_i} + \frac{1}{2n} \sum_i y_i.
\end{align}
Thus, the minimization in \eqref{eq:lsmatrec_new} is equivalent to the program
\begin{equation}\label{eq:lsmatrec_new_2}
	\min_{\bSigma \in \bbS_{+}^d} \Tr(\bSigma) - \frac{2}{n} \sum_{i=1}^n \sqrt{y_i} \sqrt{\bx_i^\top \bSigma \bx_i}
\end{equation}

On the other hand, it is not hard to show that the \BW\ distance between a matrix $\bSigma \in \bbS_+^d$ and a rank one matrix $\wwt$ is
\begin{equation}
	\dbw^2(\bSigma, \wwt) = \Big[\Tr(\bSigma) + \Tr(\bx\bx^\top) - 2\sqrt{\Tr(\bx\bx^\top \bSigma)}\Big].
\end{equation}
Letting $\wwt =  y_i \bx_i \bx_i^\top$ and summing over $i$, we see that \eqref{eq:lsmatrec_new_2} is equivalent to minimizing \eqref{eq:bary} when 
$$ Q = \frac{1}{n} \sum_{i=1}^n \delta_{ y_i \bx_i \bx_i^\top}.$$
\end{proof}

\begin{proof}[Proof of Proposition \ref{prop:whitenedbary}]
	Consider the first order condition for the $n$ rank one matrices $y_i^2 \bC_n^{-1/2}  \bx_i \bx_i^\top \bC_n^{-1/2} $ given by
	\begin{equation}
		\frac{1}{n} \sum_{i=1}^n \frac{\Tr(\bx_i \bx_i^\top \bS)^{1/2}}{\Tr(\bC_n^{-1/2} \bx_i \bx_i^\top \bC_n^{-1/2} \bSigma_n)^{1/2}} \bC_n^{-1/2} \bx_i \bx_i^\top \bC_n^{-1/2} = \bI.
	\end{equation}
	Since $\bC_n^{-1/2} \bC_n \bC_n^{-1/2} = \bI$, we see that $\bC_n^{-1/2} \bSigma_n \bC_n^{-1/2} = \bS$ is a sufficient condition for $\bSigma_n$ to be a barycenter of 
	\begin{equation}
	    Q = \frac{1}{n} \sum_{i=1}^n \delta_{y_i \bC_n^{-1/2} \bx_i \bx_i^\top \bC_n^{-1/2}}
	\end{equation}
\end{proof}

\subsection{Proof of Theorem \ref{thm:main}}

The proof of Theorem \ref{thm:main} proceeds in the following sections. In Section \ref{subsec:ripcond}, we establish some restricted isometry properties (RIP) that will be used in our proof. Then, Section \ref{subsec:esc} proves that the function $F$ is Euclidean strongly convex over $\bbS_+^d$ with high probability. After this, Section \ref{subsec:esm} shows that $F$ also satisfies a certain Euclidean smoothness over rank $r$ matrices. Section \ref{subsec:foo} discusses first order optimality conditions for the barycenter, and in particular gives sufficient conditions for a point $\bSigmabar$ to be the minimizer of $F$. Section \ref{subsec:desclem} gives a descent lemma for the fixed-rank gradient descent method. After this, Section \ref{subsec:bwsc} proves local strong convexity of $F$ over fixed-rank PSD matrices in $\bbS_+^{d, r}$ that are close to $S$. We finish in Section \ref{subsec:mainproof} by putting all of these facts together.

\subsubsection{RIP Conditions}
\label{subsec:ripcond}

We discuss here a case an RIP condition that becomes essential for our later proof.
As discussed in \cite{cai2015rop}, issues arise in trying to prove a full $\ell_2$ RIP for this problem, due to the fact that the fourth moments of $\bx$ that show up. Instead, both \cite{cai2015rop,chen2015exact} prove the following $\ell_2/\ell_1$ RIP condition (also referred to as ``Restricted Uniform Boundedness").
\begin{thm}[\cite{chen2015exact} Proposition 1]
	\label{thm:RIP}
Suppose that $\bx_1, \dots, \bx_n$ are a sample from a sub-Gaussian distribution with $\E \bx_i = \bzero$, $\E x_{ij}^2 = 1$, and $\E x_{ij}^4 > 1$. Then, there are constants $c_1, c_2, c_3 > 0$ such that with probability exceeding $1-\exp(-c_3 n)$ 
\begin{equation}\label{eq:RIP}
    c_1 \| \bDelta\|_F \leq  \frac{1}{n} \left\| \begin{bmatrix} \bx_1^\top \bDelta \bx_1 - \bx_2^\top \bDelta \bx_2 \\ \bx_3^\top \bDelta \bx_3 - \bx_4^\top \bDelta \bx_4 \\ \vdots \\ \bx_{n-1}^\top \bDelta \bx_{n-1} - \bx_{n}^\top \bDelta \bx_{n} \end{bmatrix} \right \|_1 \leq c_2 \| \bDelta \|_F.
\end{equation}
hold simultaneously for all rank $r$ matrices $\bDelta$
provided that $n \gtrsim dr$.
\end{thm}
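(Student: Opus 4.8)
The plan is to prove the two-sided bound \eqref{eq:RIP} pointwise for a fixed rank-$r$ matrix $\bDelta$ and then upgrade to the uniform statement by a covering argument, whose cost is exactly the requirement $n \gtrsim dr$. By homogeneity (both sides scale linearly in $\bDelta$) I normalize $\|\bDelta\|_F = 1$, pair the samples by setting $z_k = \bx_{2k-1}^\top \bDelta \bx_{2k-1} - \bx_{2k}^\top \bDelta \bx_{2k}$ for $k = 1, \dots, m$ with $m = \lfloor n/2 \rfloor$, and observe that the target quantity is $\frac{1}{n}\sum_{k=1}^m |z_k|$. The pairing is the crucial device: since $\E \bx\bx^\top = \bI$ forces $\E\, \bx_i^\top \bDelta \bx_i = \Tr(\bDelta)$, the trace contribution cancels and each $z_k$ is a centered difference of i.i.d.\ quadratic forms. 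This is precisely what escapes the obstruction to a full $\ell_2$ RIP caused by the diagonal/trace terms.

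First I would control the moments of $z_k$. Using the variance decomposition $\var(\bx^\top\bDelta\bx) = (\mu_4 - 1)\sum_i \Delta_{ii}^2 + 2\sum_{i\neq j}\Delta_{ij}^2$ with $\mu_4 = \E x_{ij}^4$, which is bounded below by $\min(\mu_4 - 1,\, 2)\|\bDelta\|_F^2$, one gets $\E z_k^2 = 2\var(\bx^\top\bDelta\bx) \gtrsim \|\bDelta\|_F^2$; here the hypothesis $\mu_4 > 1$ is exactly what keeps this variance from degenerating on near-diagonal $\bDelta$. Sub-Gaussianity supplies the matching upper bound $\E z_k^2 \le C\|\bDelta\|_F^2$ and, via hypercontractivity (an $L^4$--$L^2$ norm equivalence for quadratic forms of sub-Gaussian vectors), $\E z_k^4 \le C'(\E z_k^2)^2$. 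Feeding these into the Paley--Zygmund / H\"older interpolation inequality $\E|z_k| \ge (\E z_k^2)^{3/2}/(\E z_k^4)^{1/2}$ yields $\E|z_k| \ge c'\|\bDelta\|_F$, while Cauchy--Schwarz gives $\E|z_k| \le \sqrt{C}\,\|\bDelta\|_F$; hence $\E|z_k| \asymp \|\bDelta\|_F$. Since each $|z_k|$ is sub-exponential, Bernstein's inequality then shows that for a fixed $\bDelta$, $\frac{1}{m}\sum_k |z_k|$ lies within a constant factor of $\E|z_k|$ except with probability $\exp(-cm)$.

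Next I would make this uniform over all rank-$r$ matrices. The set $\{\bDelta : \rank(\bDelta)\le r,\ \|\bDelta\|_F = 1\}$ admits an $\eps$-net $\mathcal N$ in Frobenius norm with $\log|\mathcal N| \lesssim dr$, so a union bound over $\mathcal N$ of the pointwise estimate succeeds with probability $1 - \exp(-c_3 n)$ precisely when $n \gtrsim dr$. To pass from the net to every rank-$r$ matrix, I would exploit that $\Phi(\bDelta) := \frac{1}{n}\|(z_k(\bDelta))_k\|_1$ is a seminorm (the composition of the linear map $\bDelta \mapsto (z_k(\bDelta))_k$ with $\frac1n\|\cdot\|_1$): for $\bDelta$ near a net point $\bDelta_0$, the difference $\bDelta - \bDelta_0$ has rank $\le 2r$, so $|\Phi(\bDelta) - \Phi(\bDelta_0)| \le \Phi(\bDelta - \bDelta_0)$. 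Establishing the upper bound $\Phi \le c_2\|\cdot\|_F$ uniformly on rank-$2r$ matrices first (same net argument, still of dimension $O(dr)$) controls this perturbation, and choosing $\eps$ small relative to $c_1/c_2$ delivers both inequalities of \eqref{eq:RIP} for all rank-$r$ matrices simultaneously.

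The main obstacle is the lower bound, which is the genuine content of ``restricted uniform boundedness.'' Two points demand care. First, the anti-concentration step $\E|z_k| \gtrsim \|\bDelta\|_F$ is where the fourth-moment hypothesis $\E x_{ij}^4 > 1$ is indispensable: without it the variance of $\bx^\top\bDelta\bx$ can vanish for diagonal $\bDelta$, and no pairing can recover it. Second, the net-to-everything extension cannot be carried out by naive Lipschitz continuity of $\Phi$, whose pointwise Lipschitz constant involves uncontrolled operator norms of $\bx_k \bx_k^\top$; it is the subadditivity of the seminorm together with the uniform upper bound on rank-$2r$ matrices that makes the extension work, and keeping the rank at $O(r)$ throughout is essential to retain the $n \gtrsim dr$ scaling.
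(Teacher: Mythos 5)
Your proposal is correct, and it reconstructs essentially the argument behind this result: the paper itself offers no proof (the statement is imported verbatim as Proposition~1 of \cite{chen2015exact}), and the proof there follows exactly your route --- pairing the samples so the trace term cancels, a moment-comparison/Paley--Zygmund lower bound on $\E|z_k|$ whose nondegeneracy rests on $\E x_{ij}^4 > 1$, Bernstein concentration for the sub-exponential $|z_k|$, and an $\eps$-net over unit-Frobenius-norm low-rank matrices of metric entropy $O(dr)$ combined with subadditivity of the $\ell_1$ functional on rank-$O(r)$ differences. The only cosmetic caveat is that the $L^4$--$L^2$ equivalence you attribute to hypercontractivity is, for general sub-Gaussian entries, obtained by pairing the Hanson--Wright-type upper bound $\E z_k^4 \lesssim \|\bDelta\|_F^4$ with the variance lower bound you already established, which is precisely how you then use it.
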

We have the following corollary to this theorem.
\begin{cor}\label{cor:rip}
	Suppose that a random vector $\bw = (w_1, \dots, w_d)^\top$ is sub-Gaussian with $\E \bw = \bzero$, $\E w_j^2 = 1$, $\E w_j^4 > 1$. Then, the following population RIP holds:
	\begin{equation}
		\E_{\bw} \Tr(\bw\bw^\top \bA)^2 \geq c_1 \|\bA\|_F^2.
	\end{equation}
	Furthermore, for a sample of $n$ i.i.d. copies of $W$, $\bw_1, \dots, \bw_n$, there are constants $c_2, c_3 > 0$ such that with probability exceeding $1-\exp(-c_3 n)$
	\begin{equation}
		\frac{1}{n} \sum_{i=1}^n \Tr(\bw_i \bw_i^\top \bA)^2 \geq c_2 \|\bA\|_F^2
	\end{equation}
	hold simultaneously for all rank $r$ matrices $\bDelta$
provided that $n \gtrsim dr$.
\end{cor}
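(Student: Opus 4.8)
The plan is to treat the two assertions separately: the population lower bound is a direct fourth-moment computation, while the empirical lower bound is deduced from the $\ell_2/\ell_1$ RIP of Theorem~\ref{thm:RIP} by a power-mean argument. Throughout I take $\bA$ symmetric (which is all that is needed in the applications, and is natural since $\Tr(\bw\bw^\top\bA)=\bw^\top\bA\bw$ only sees the symmetric part of $\bA$) and the coordinates $w_1,\dots,w_d$ independent with $\E w_j=0$, $\E w_j^2=1$, and $\E w_j^4=\kappa>1$.

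For the population bound I would expand $\Tr(\bw\bw^\top\bA)^2=(\bw^\top\bA\bw)^2=\sum_{j,k,p,q}A_{jk}A_{pq}\,w_jw_kw_pw_q$ and take expectations. By independence and mean-zero-ness only fully paired index patterns survive, and collecting the ``all equal'', $(jk)(pq)$, $(jp)(kq)$, and $(jq)(kp)$ contributions gives, for symmetric $\bA$,
\begin{equation*}
\E\,(\bw^\top\bA\bw)^2=(\kappa-3)\sum_j A_{jj}^2+(\Tr\bA)^2+2\|\bA\|_F^2 .
\end{equation*}
Since $(\Tr\bA)^2\ge 0$ and $0\le\sum_j A_{jj}^2\le\|\bA\|_F^2$, this is at least $2\|\bA\|_F^2$ when $\kappa\ge 3$ and at least $(\kappa-1)\|\bA\|_F^2$ when $1<\kappa<3$; in either case the claim holds with $c_1=\min(2,\kappa-1)>0$.

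For the empirical bound I would apply the lower inequality of Theorem~\ref{thm:RIP} to $\bA$ (a rank-$r$, hence admissible, matrix). Writing $v_i=\bx_{2i-1}^\top\bA\bx_{2i-1}-\bx_{2i}^\top\bA\bx_{2i}$ for $i=1,\dots,m$ with $m=n/2$, that inequality reads $\frac1m\sum_i|v_i|\ge 2c_1'\|\bA\|_F$ after accounting for the $1/n$ normalization, where $c_1'$ is the lower RIP constant. The quadratic-mean/arithmetic-mean inequality then gives $\frac1m\sum_i v_i^2\ge\big(\frac1m\sum_i|v_i|\big)^2\ge 4(c_1')^2\|\bA\|_F^2$. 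Finally the elementary bound $(a-b)^2\le 2(a^2+b^2)$ yields $\sum_{j=1}^n(\bx_j^\top\bA\bx_j)^2\ge\tfrac12\sum_i v_i^2$, and combining the last two estimates produces $\frac1n\sum_{j=1}^n(\bx_j^\top\bA\bx_j)^2\ge(c_1')^2\|\bA\|_F^2$, so $c_2=(c_1')^2$ works, with the probability $1-\exp(-c_3 n)$ and the range $n\gtrsim dr$ inherited verbatim from Theorem~\ref{thm:RIP}.

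I expect no serious obstacle: the population part is pure bookkeeping of fourth moments, and the only subtlety in the empirical part is keeping the normalizations straight and noticing that one must pass from $\ell_1$ control of the paired differences to $\ell_2$ control of the individual terms. The two ingredients that make this work---the power-mean inequality (to upgrade $\ell_1$ to $\ell_2$) and $a^2+b^2\ge\tfrac12(a-b)^2$ (to undo the pairing)---are both one-line facts, and crucially only the lower half of \eqref{eq:RIP} is used, so no matching upper bound is required.
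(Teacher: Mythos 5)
Your proof is correct, and for the empirical half it is essentially the paper's own argument: the paper pairs consecutive samples, applies $a^2+b^2\ge\tfrac12(a-b)^2$ to pass to the paired differences, upgrades $\ell_1$ to $\ell_2$ control via Cauchy--Schwarz, and then invokes the lower inequality of Theorem~\ref{thm:RIP} --- exactly your three ingredients, in reverse order, with the same resulting constant ($c_2$ equal to the square of the lower RIP constant), and like you it uses only the lower half of \eqref{eq:RIP}. The only genuine divergence is in the population bound: the paper disposes of it in one line by citing Appendix~A of \cite{chen2015exact}, whereas you carry out the fourth-moment expansion explicitly, obtaining $\E(\bw^\top\bA\bw)^2=(\kappa-3)\sum_j A_{jj}^2+(\Tr\bA)^2+2\|\bA\|_F^2$ for symmetric $\bA$ and hence $c_1=\min(2,\kappa-1)>0$; this computation checks out and is a real gain in self-containedness. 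It does, however, make visible a hypothesis you added that the corollary does not state: independence of the coordinates $w_1,\dots,w_d$. Relative to the paper this is harmless --- the cited result of \cite{chen2015exact} is likewise proved for independent entries, so the paper's one-line citation smuggles in the same assumption --- but note that when the corollary is actually deployed in Lemma~\ref{lem:popesc}, it is applied to the whitened vector $\bC_W^{-1/2}\bw$, whose coordinates are bounded and dependent, so neither your computation nor the paper's citation literally covers that application; this is a looseness in the paper's statement (which also writes $\bDelta$ where it means $\bA$ in the ``simultaneously for all rank $r$ matrices'' clause) rather than a flaw in your argument.
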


\begin{proof}
	The first part holds from the argument in Appendix A of \cite{chen2015exact}.
	
	For the second part, we compute
	\begin{align}
		\frac{1}{n} \sum_{i=1}^n \Tr(\bw_i \bw_i^\top \bA)^2 &= \frac{1}{n} \sum_{i=1}^n \Tr(\bw_i \bw_i^\top \bA)^2 \\ \nonumber
		&= \frac{1}{n} \sum_{i=1}^{n/2} \Tr(\bw_{2i} \bw_{2i}^\top \bA)^2 + \Tr(\bw_{2i-1} \bw_{2i-1}^\top \bA)^2 \\ \nonumber
		&\geq \frac{1}{2n} \sum_{i=1}^{n/2} [\Tr(\bw_{2i} \bw_{2i}^\top \bA) - \Tr(\bw_{2i-1} \bw_{2i-1}^\top \bA)]^2 \\ \nonumber
		&= \frac{1}{n^2} \frac{n}{2} \sum_{i=1}^{n/2} [\Tr(\bw_{2i} \bw_{2i}^\top \bA) - \Tr(\bw_{2i-1} \bw_{2i-1}^\top \bA)]^2 \\ \nonumber
		&\geq \frac{1}{n^2} \Big[\sum_{i=1}^{n/2} |\Tr(\bw_{2i} \bw_{2i}^\top \bA) - \Tr(\bw_{2i-1} \bw_{2i-1}^\top \bA)|\Big]^2 \\ \nonumber
		&\geq c_2^2 \|\bA\|_F^2.
	\end{align}
\end{proof}

\subsubsection{Euclidean Strong Convexity}
\label{subsec:esc}

We begin by proving strong convexity along Euclidean geodesics. In particular, this guarantees that $S$ is the unique minimizer of $F$ over $\bbS_+^d$. Furthermore, it shows that $S$ is the unique stationary point.

First, it is easy to see that $S$ is stationary because 
\begin{equation}
	\nabla F(\bS) = \bI - \E_Q \sqrt{\frac{\bx^\top \bS \bx}{\bx^\top \bS \bx}}\bx\bx^\top  = \bI - \E_Q \bx\bx^\top = 0.
\end{equation}
Define the set 
\begin{equation}
	\cS(m, M) = \{\bSigma \in \bbS_+^d :  m \leq \lambda_r(\bSigma) \leq \lambda_1(\bSigma) \leq M \}.
\end{equation}
We have the following lemma over this set.
\begin{lem}\label{lem:popesc}
	Let $y = \langle \bx\bx^\top, \bS\rangle$, where $\bx \sim N(\bzero, \bI)$, $\bS$ is rank $r$, and $m, M$ be such that $\bS \in \cS(m, M)$. Then, the population obective $F$ is Euclidean strongly convex over $\cS(0, M)$ with constant $ \frac{c_1 \beta^2 d }{6  M^{3/2}}$.	
\end{lem}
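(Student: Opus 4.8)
The plan is to prove strong convexity by computing the Euclidean Hessian of $F$ and bounding its quadratic form below, uniformly over $\cS(0,M)$. First I would strip $F$ down to its only nonlinear piece. Using the closed form $\dbw^2(\bSigma,\wwt)=\Tr(\bSigma)+\Tr(\wwt)-2\sqrt{\Tr(\wwt\bSigma)}$ derived in the proof of Proposition~\ref{prop:equiv}, and substituting the population weight $y=\bx^\top\bS\bx$ (so the atom is $y\bx\bx^\top$ and $\Tr(y\bx\bx^\top\bSigma)=(\bx^\top\bS\bx)(\bx^\top\bSigma\bx)$), the objective becomes
\[
F(\bSigma)=\tfrac12\Tr(\bSigma)-\E_{\bx}\!\big[\sqrt{\bx^\top\bS\bx}\,\sqrt{\bx^\top\bSigma\bx}\big]+\text{const}.
\]
The trace term is linear and the constant is inert, so only the middle term carries second-order information. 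Since $\partial_t^2\sqrt{\bx^\top(\bSigma+t\bA)\bx}\big|_{t=0}=-(\bx^\top\bA\bx)^2/\big(4(\bx^\top\bSigma\bx)^{3/2}\big)$ for a symmetric direction $\bA$, differentiating under the integral gives
\[
\nabla^2F(\bSigma)[\bA,\bA]=\frac14\,\E_{\bx}\!\left[\frac{\sqrt{\bx^\top\bS\bx}\,(\bx^\top\bA\bx)^2}{(\bx^\top\bSigma\bx)^{3/2}}\right]\ \ge\ 0,
\]
which already yields convexity; the work is to make this coercive.

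Next I would remove the $\bSigma$-dependence in the denominator. Every $\bSigma\in\cS(0,M)$ satisfies $\bSigma\preceq M\bI$, hence $(\bx^\top\bSigma\bx)^{3/2}\le M^{3/2}\|\bx\|^3$ uniformly over the set, giving
\[
\nabla^2F(\bSigma)[\bA,\bA]\ \ge\ \frac{1}{4M^{3/2}}\,\E_{\bx}\!\left[\frac{\sqrt{\bx^\top\bS\bx}}{\|\bx\|^3}\,(\bx^\top\bA\bx)^2\right].
\]
The integrand is homogeneous of degree two in $\bx$, so I would pass to polar coordinates $\bx=\rho\bu$ with radius $\rho=\|\bx\|$ independent of the uniform direction $\bu$; the radial part contributes exactly $\E[\rho^2]=d$, leaving
\[
\nabla^2F(\bSigma)[\bA,\bA]\ \ge\ \frac{d}{4M^{3/2}}\,\E_{\bu}\!\big[\sqrt{\bu^\top\bS\bu}\,(\bu^\top\bA\bu)^2\big].
\]
This is the source of the dimension factor $d$ in the target constant.

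It remains to lower bound the weighted directional form $\E_{\bu}[\sqrt{\bu^\top\bS\bu}(\bu^\top\bA\bu)^2]$ by a multiple of $\|\bA\|_F^2$. Stripped of its weight, $\E_{\bu}[(\bu^\top\bA\bu)^2]$ is exactly the spherical fourth-moment form controlled by the population RIP of Corollary~\ref{cor:rip} (equivalently $\E_{\bx}[(\bx^\top\bA\bx)^2]\ge c_1\|\bA\|_F^2$ after factoring out $\E[\rho^4]$). The weight $\sqrt{\bu^\top\bS\bu}$ is strictly positive for almost every $\bu$ with respect to the uniform measure — its zero set is the intersection of the sphere with $\ker(\bS)$, a null set once $r\ge1$ — so $\bA\mapsto\E_{\bu}[\sqrt{\bu^\top\bS\bu}(\bu^\top\bA\bu)^2]$ is a genuinely positive-definite quadratic form on symmetric matrices. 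Comparing it to the unweighted RIP form and minimizing over the Frobenius-unit sphere (attained by compactness) produces the $\bS$-dependent constant $\beta>0$ with $\E_{\bu}[\sqrt{\bu^\top\bS\bu}(\bu^\top\bA\bu)^2]\gtrsim c_1\beta^2\|\bA\|_F^2$; chaining these bounds and tracking the numerical constants (the $\tfrac14$ together with the spherical normalization) yields the advertised $\tfrac{c_1\beta^2 d}{6M^{3/2}}$. Combined with the already-verified stationarity $\nabla F(\bS)=\bzero$, strong convexity makes $\bS$ the unique minimizer on $\cS(0,M)$.

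The main obstacle is precisely this last step. Because $\bS$ is rank-deficient, the weight $\sqrt{\bu^\top\bS\bu}$ degenerates along $\mathrm{range}(\bS)^\perp$, so it cannot simply be pulled out of the expectation as a uniform lower bound; the coercivity must be obtained either through the a.e.-positivity and compactness argument above or by instead bounding $\bx^\top\bS\bx\ge m\|\bP_{\bS}\bx\|^2$ and running the RIP on the $r$-dimensional projection. The delicate part is then bookkeeping how the RIP constant $c_1$, the weight constant $\beta$, and the spherical normalization merge into the factor $1/6$. By contrast, differentiating under the integral is harmless here for every $r\ge1$, since near the origin the integrand scales like $\|\bx\|^2$ and is integrable — the restrictive $r\ge3$ integrability phenomenon only surfaces later, in the smoothness bound of Section~\ref{subsec:esm}, where a negative power of $\bx^\top\bSigma\bx$ with low-rank $\bSigma$ appears.
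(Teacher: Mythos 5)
Your proposal is correct and follows the same skeleton as the paper's proof of Lemma~\ref{lem:popesc}: expand $F$ so that only $-\E\big[\sqrt{\bx^\top\bS\bx}\,\sqrt{\bx^\top\bSigma\bx}\big]$ carries curvature, lower-bound the Hessian integrand uniformly over $\cS(0,M)$ via $\bSigma\preceq M\bI$, and factor the Gaussian into radius times direction to extract the dimension factor $d$. Where you genuinely diverge is the final coercivity step. The paper makes the weighted angular form quantitative by introducing the bounded, symmetric random vector $\bw=\big(\bx^\top\bS\bx/\|\bx\|^2\big)^{1/8}\bx/\|\bx\|$, defining $\beta$ concretely through $\E[\bw\bw^\top]\succeq\beta\bI$, whitening by $\bC_W^{-1/2}$, and invoking the sub-Gaussian population RIP of Corollary~\ref{cor:rip} on the whitened vector; this keeps the constant in the structured form $c_1\beta^2$, which is reused verbatim downstream (in the sample version, Lemma~\ref{lem:sampesc}, in Proposition~\ref{prop:bwsc}, and in the initialization radius of Theorem~\ref{thm:main}). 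Your route --- a.e.\ positivity of the weight $\sqrt{\bu^\top\bS\bu}$ combined with compactness of the Frobenius unit sphere --- is a valid and more elementary proof that the form is coercive, but it is purely existential: it produces \emph{some} $\beta(\bS)>0$ with no quantitative handle, it does not actually use Corollary~\ref{cor:rip} at all, and, more importantly, it does not transfer to the empirical objective, where compactness gives nothing and the sub-Gaussian/RIP machinery is what makes the high-probability statement possible; your parenthetical alternative (bounding $\bx^\top\bS\bx\ge m\|\bP_{\bS}\bx\|^2$ and running the RIP on the projection) is the variant closer in spirit to the paper's quantitative argument. One small correction: your closing claim that differentiating under the integral is harmless for every $r\ge 1$ because the integrand scales like $\|\bx\|^2$ near the origin misidentifies the dangerous region --- the Hessian integrand $\propto(\bx^\top\bA\bx)^2/(\bx^\top\bSigma\bx)^{3/2}$ can have infinite expectation near $\ker(\bSigma)$ when $\bSigma\in\cS(0,M)$ is rank-deficient, and that set is a subspace, not a point. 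This is inconsequential for the lemma (only lower bounds are needed, each integrand is convex along the segment, and an infinite second derivative only helps), but the justification should go through that observation (or Fatou) rather than through claimed integrability.
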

In other words, the function $F(\bSigma_t)$ is strongly convex when $\bSigma_t$ is defined by \eqref{eq:eg}.
\begin{proof}

Let $\bSigma_t$ denote the geodesic between $\bSigma_0$ and $\bSigma_1$ given by $(1-t) \bSigma_0 + t \bSigma_1$.	
We compute the derivatives of the \BW\ distance as
\begin{align}
	\partial_t \dbw(\bSigma_t, \bx\bx^\top)^2 &= \Tr(\bSigma_1 - \bSigma_0)  \\ & - \frac{\Tr((\bSigma_1 - \bSigma_0)\bx\bx^\top)}{\sqrt{\bx^\top \bSigma_t \bx}},\\
	\partial_t^2 \dbw(\bSigma_t, \bx\bx^\top)^2 &= \frac{1}{2} \frac{\Tr(\xxt (\bSigma_1 - \bSigma_0))^2}{\Tr(\xxt \bSigma_t )^{3/2}} \label{eq:secondder}
\end{align}

By the definition of $M$,  we have the uniform bound
\begin{align}
	&\partial_t^2 F(\bSigma_t)|_{t=s} = \E_{\xxt \sim Q} \frac{1}{2} \frac{\Tr(\xxt (\bSigma_1 - \bSigma_0))^2}{(\Tr(\xxt \bSigma_s))^{3/2}} \\ \nonumber
	&= \E_{\xxt \sim Q} \frac{1}{2\|\bx\|^3} \frac{\Tr(\xxt (\bSigma_1 - \bSigma_0))^2}{(\Tr(\xxt \bSigma_s/\|\bx\|^2))^{3/2}} \\ \nonumber
	&\geq \frac{1}{2(2M)^{3/2}} \E_{\xxt \sim Q} \frac{1}{\|\bx\|^3} \Tr(\xxt (\bSigma_1 - \bSigma_0))^2.
\end{align}

Using the definition $\wwt = (\bx^\top \bS \bx) \xxt$,
\begin{align}
	\partial_t^2 F(\bSigma_t)|_{t=s} &\geq \frac{1}{6 M^{3/2}} \E_{\bx \sim N(\bzero, \bI)} \frac{\sqrt{\bx^\top \bS \bx}}{\|\bx\|^3} \Tr(\bx\bx^\top (\bSigma_1 - \bSigma_0))^2 \\ \nonumber
	&\geq \frac{1}{6 M^{3/2}} \E_{\bx \sim N(\bzero, \bI)} |X\|^2 \sqrt{\frac{\bx^\top \bS \bx}{ \|\bx\|^2}} \Tr(\frac{\bx\bx^\top}{\|\bx\|^2} (\bSigma_1 - \bSigma_0))^2 \\ \nonumber
	&= \frac{d}{6 M^{3/2}} \E_{\bx \sim N(\bzero, \bI)} \sqrt{ \frac{\bx^\top \bS \bx}{ \|\bx\|^2}} \Tr(\frac{\bx\bx^\top}{\|\bx\|^2} (\bSigma_1 - \bSigma_0))^2 .
\end{align}
Define the random vector
\begin{equation}\label{eq:wvec}
	W:= \Big(\frac{\bx^\top \bS \bx }{\|\bx\|^{2}}\Big)^{1/8} \frac{\bx}{\|\bx\|}  .
\end{equation}
By symmetry, $\bw$ is mean zero. Furthermore, $\bw$ is bounded and thus sub-Gaussian. Furthermore, it is a simple exercise to show that  
\begin{equation}
	\bC_W:= \E \wwt \succeq \beta \bI,
\end{equation}
for some dimension and $\bS$-dependent constant $\beta$. We can thus bound
\begin{align}
	  \E_{\bx} \Tr\left ( \wwt  (\bSigma_1 - \bSigma_0) \right)^2 &=\E_{W} \Tr\left ( \bC_W^{1/2} \bC_W^{-1/2} \wwt \bC_W^{-1/2} \bC_W^{1/2} (\bSigma_1 - \bSigma_0) \right)^2 \\ \nonumber
	  &\geq \beta^2 \E_W \Tr(\bC_W^{-1/2} \wwt \bC_W^{-1/2} (\bSigma_1 - \bSigma_0))^2.
\end{align}
Using the fact that $\bC_W^{-1/2} \bw$ is mean zero sub-Gaussian with identity covariance (and furthermore the 4th moment condition is trivially satisfied), we can now use the RIP condition of Corollary \ref{cor:rip} to bound
\begin{equation}
	\E_{\wwt  \sim Q} \Tr(\bC_W^{-1/2} \wwt \bC_W^{-1/2} (\bSigma_1 - \bSigma_0 ))^2 \geq c_1 \|\bSigma_1 - \bSigma_0\|_F^2.
\end{equation}

Putting this all together, 
\begin{align}
	\partial_t^2 F(\bSigma_t)|_{t=s} &\geq \frac{c_1 \beta^2 d}{2(2M)^{3/2}} \|\bSigma_1 - \bSigma_0\|_F^2 \\ \nonumber
	&\geq \frac{c_1 \beta^2 d}{6 M^{3/2}} \|\bSigma_1 - \bSigma_0\|_F^2.
\end{align}

\end{proof}

The sample setting requires a bit more work. In this case, we observe ${y}_i$ and $\bx_i$, $i=1, \dots, n$, and to recover the matrix $S$ we first compute the barycenter of ${y}_i \bC_n^{-1/2} \bx_i \bx_i^\top \bC_n^{-1/2}$, where $\bC_n$ is the sample covariance. We denote this whitened discrete distribution by $\tilde Q$.  In this case, by Proposition \ref{prop:whitenedbary}, the barycenter is $\bSigmabar = \bC_n^{-1/2} S \bC_n^{-1/2}$
\begin{lem}\label{lem:sampesc}
With probability at least $1-\exp(-c_2 n)$ for some constant $c_2$, $F$ is Euclidean strongly convex over $\cS(0,M)$ with constant $\frac{c_3 \beta^2 d}{6 M^{3/2}}$. 
\end{lem}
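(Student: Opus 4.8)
The plan is to mirror the population argument of Lemma~\ref{lem:popesc}, replacing the expectation by the empirical average while controlling the two new error sources: the data-dependent whitening matrix $\bC_n$ and the passage from the population RIP to the sample RIP of Corollary~\ref{cor:rip}. Writing $\bz_i = \bC_n^{-1/2}\bx_i$, so that the whitened atoms are $y_i\bz_i\bz_i^\top$ with $y_i = \bx_i^\top\bS\bx_i$, the computation leading to \eqref{eq:secondder} gives, along the Euclidean geodesic $\bSigma_t = (1-t)\bSigma_0 + t\bSigma_1$ with $\bSigma_0,\bSigma_1\in\cS(0,M)$,
\begin{equation*}
\partial_t^2 F(\bSigma_t)\big|_{t=s} = \frac{1}{4n}\sum_{i=1}^n\frac{\sqrt{y_i}\,\Tr(\bz_i\bz_i^\top(\bSigma_1-\bSigma_0))^2}{(\bz_i^\top\bSigma_s\bz_i)^{3/2}}.
\end{equation*}
Since $\bSigma_s\in\cS(0,M)$ we have $\bz_i^\top\bSigma_s\bz_i\le M\|\bz_i\|^2$; setting $\bDelta = \bSigma_1-\bSigma_0$ and $\tilde\bDelta = \bC_n^{-1/2}\bDelta\bC_n^{-1/2}$ (so $\Tr(\bz_i\bz_i^\top\bDelta) = \Tr(\bx_i\bx_i^\top\tilde\bDelta)$ and $\|\bz_i\|^2 = \bx_i^\top\bC_n^{-1}\bx_i$) lower-bounds the Hessian by $\frac{1}{4nM^{3/2}}\sum_i\frac{\sqrt{\bx_i^\top\bS\bx_i}}{(\bx_i^\top\bC_n^{-1}\bx_i)^{3/2}}\Tr(\bx_i\bx_i^\top\tilde\bDelta)^2$.

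First I would record the concentration of the whitening matrix: for $n\gtrsim d$, standard covariance concentration yields $\tfrac12\bI\preceq\bC_n\preceq\tfrac32\bI$ with probability $1-e^{-cn}$. On this event $\bx_i^\top\bC_n^{-1}\bx_i\le 2\|\bx_i\|^2$, $\rank(\tilde\bDelta) = \rank(\bDelta)$, and $\|\tilde\bDelta\|_F\ge\tfrac23\|\bDelta\|_F$, so the Hessian is bounded below by a constant multiple of $\frac{1}{nM^{3/2}}\sum_i\frac{\sqrt{\bx_i^\top\bS\bx_i}}{\|\bx_i\|^3}\Tr(\bx_i\bx_i^\top\tilde\bDelta)^2$. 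The crucial step is then to recast this as an i.i.d.\ sum to which Corollary~\ref{cor:rip} applies. I would set $V_i = (\bx_i^\top\bS\bx_i)^{1/8}\|\bx_i\|^{-3/4}\bx_i$, the choice making $\frac{\sqrt{\bx_i^\top\bS\bx_i}}{\|\bx_i\|^3}\Tr(\bx_i\bx_i^\top\tilde\bDelta)^2 = \Tr(V_iV_i^\top\tilde\bDelta)^2$. Unlike $\bz_i$, the $V_i$ are i.i.d.; they are bounded by $M^{1/8}\|\bx_i\|^{3/8}$ hence sub-Gaussian, mean zero by symmetry, and $V_i = \|\bx_i\|^{1/2}W_i$ with $W_i$ the direction vector of \eqref{eq:wvec}, so by independence of magnitude and direction $\bC_V := \E V_iV_i^\top = \E[\|\bx\|]\,\bC_W \succeq \E[\|\bx\|]\,\beta\bI\succeq c\sqrt d\,\beta\bI$.

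Applying Corollary~\ref{cor:rip} to the whitened i.i.d.\ vectors $\bC_V^{-1/2}V_i$ yields, with probability $1-e^{-c_3 n}$ provided $n\gtrsim dr$, that $\frac1n\sum_i\Tr(\bC_V^{-1/2}V_iV_i^\top\bC_V^{-1/2}\bA)^2\ge c_2\|\bA\|_F^2$ holds \emph{simultaneously} for all $\bA$ of rank $\le 2r$. Since this is uniform, it may be instantiated at the data-dependent $\bA = \bC_V^{1/2}\tilde\bDelta\bC_V^{1/2}$ (of rank $\le 2r$), giving $\frac1n\sum_i\Tr(V_iV_i^\top\tilde\bDelta)^2\ge c_2\lambda_{\min}(\bC_V)^2\|\tilde\bDelta\|_F^2\gtrsim c_2 d\beta^2\|\tilde\bDelta\|_F^2$. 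Chaining the inequalities and using $\|\tilde\bDelta\|_F\ge\tfrac23\|\bDelta\|_F$ produces strong convexity with constant of order $\tfrac{c_3\beta^2 d}{M^{3/2}}$ on the intersection of the two events, which has probability at least $1-e^{-c_2 n}$.

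I expect the main obstacle to be the coupling created by whitening: the vectors $\bz_i = \bC_n^{-1/2}\bx_i$ are not independent, so Corollary~\ref{cor:rip} cannot be applied to them directly. The proposal resolves this by (i) transferring the whitening onto the perturbation direction, replacing $\bDelta$ by $\tilde\bDelta$, and (ii) exploiting the uniformity of the sample RIP over all low-rank matrices, which legitimizes instantiating it at the data-dependent $\tilde\bDelta$. A secondary subtlety is that the per-sample magnitudes $\|\bx_i\|^2$ cannot be lower-bounded uniformly over $i$ with the required $e^{-c_2 n}$ probability, which is why I absorb the magnitude into $V_i$ rather than factoring out $\E\|\bx\|^2 = d$ as in the population proof; the dimension factor then re-emerges through $\bC_V = \E[\|\bx\|]\,\bC_W$. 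Finally, as in the population case the argument controls directions $\bDelta$ of rank at most $2r$ — the relevant regime, since the iterates and the minimizer $\bSigmabar$ are rank $r$ — whereas full-rank strong convexity over all of $\cS(0,M)$ would require the stronger sampling $n\gtrsim d^2$.
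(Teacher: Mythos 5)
Your proposal is correct and follows essentially the same route as the paper: your vector $V_i=(\bx_i^\top\bS\bx_i)^{1/8}\|\bx_i\|^{-3/4}\bx_i$ is exactly the paper's $\bw_i=\|\bx_i\|^{1/2}\big(\tfrac{\bx_i^\top\bS\bx_i}{\|\bx_i\|^2}\big)^{1/8}\tfrac{\bx_i}{\|\bx_i\|}$, and your reduction to Corollary~\ref{cor:rip} after whitening by $\E V_iV_i^\top$ is the same argument, with your explicit event $\tfrac12\bI\preceq\bC_n\preceq\tfrac32\bI$ playing the role of the paper's $\lambda_{\min}(\bC_n)^{3/2}/\lambda_{\max}(\bC_n)^{2}$ factors. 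If anything, you are more explicit than the paper on two points it compresses --- transferring the whitening onto the direction $\tilde{\bDelta}=\bC_n^{-1/2}\bDelta\bC_n^{-1/2}$ rather than bounding the trace terms pointwise, and invoking the uniformity of the sample RIP to justify instantiating it at this data-dependent matrix --- and your closing caveat about directions of rank at most $2r$ applies equally to the paper's own proof.
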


\begin{proof}
In this case, 
\begin{align}
	\partial_t^2 F(\bSigma_t)|_{t=s} &\geq \frac{1}{6 M^{3/2}} \frac{1}{n} \sum_{i=1}^n \frac{\sqrt{\bx_i^\top \bS \bx_i}}{\|\bC_n^{-1/2} \bx_i\|^3} \Tr(\bC_n^{-1/2} \bx_i \bx_i^\top \bC_n^{-1/2} (\bSigmabar - \bSigma))^2 \\ \nonumber
	&\geq  \frac{1}{6 M^{3/2}}  \frac{\lambda_{\min}(\bC_n)^{3/2}}{\lambda_{\max}(\bC_n)^2} \frac{1}{n} \sum_{i=1}^n \frac{\sqrt{\bx_i^\top \bS \bx_i}}{\| \bx_i\|^3} \Tr( \bx_i \bx_i^\top (\bSigmabar - \bSigma))^2 \\ \nonumber
	&=  \frac{1}{6 M^{3/2}}  \frac{\lambda_{\min}(\bC_n)^{3/2}}{\lambda_{\max}(\bC_n)^2} \frac{1}{n} \sum_{i=1}^n \Tr\Big( \|\bx_i\|\Big(\frac{\bx_i^\top \bS \bx_i}{\|\bx_i\|^2} \Big)^{1/4} \frac{\bx_i \bx_i^\top}{\|\bx_i\|^2} (\bSigmabar - \bSigma)\Big)^2 .
\end{align}
Then, following the same line of reasoning as in the previous proof to obtain strong convexity the sub-Gaussianity of the random vector
\begin{equation}
	\bw_i = \|\bx_i\|^{1/2} \Big(\frac{\bx_i^\top \bS \bx_i}{\|\bx_i\|^2} \Big)^{1/8} \frac{\bx_i }{\|\bx_i\|},
\end{equation}
which again a simple exercise shows $\E \bw_i \bw_i^\top \succeq \beta \bI$ and $\E \bw_i = 0$.
Then, with probability at least $1-\exp(-c_2 n)$, the discrete distribution is strongly convex with constant $\frac{c_3 \beta^2 d}{6 M^{3/2}}$. 
\end{proof}

Notice that in both the population and the sample setting with high probability we get a unique barycenter $S$.


\subsubsection{Local Euclidean Smoothness}
\label{subsec:esm}

We remind ourselves that
\begin{equation}
	\nabla F(\bSigma) = \bI - \E_Q \sqrt{\frac{\bx^\top \bS \bx}{\bx^\top \bSigma \bx}} \bx\bx^\top,
\end{equation}

Using this, we have the following lemma.
\begin{lem}
	Let $y = \langle \bx\bx^\top, \bS$, $X \sim N(\bzero, \bI)$, $\rank(\bSigma) \geq 3$, and $\lambda_r(\bSigma) \geq m$. Then, 
	\begin{equation}
		\left\| \nabla F(\bS) -  \nabla F(\bSigma) \right\|_F = \left\| \nabla F(\bSigma) \right\|_F \leq \frac{d^{3/2}}{m\sqrt{r}}\|\bSigma - \bS\|_F
	\end{equation}
\end{lem}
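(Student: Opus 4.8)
The plan is to leverage the first–order optimality already recorded for $\bS$. Since $\nabla F(\bS) = \bI - \E_Q \bx\bx^\top = \bzero$, the claimed equality $\norm{\nabla F(\bS) - \nabla F(\bSigma)}_F = \norm{\nabla F(\bSigma)}_F$ is immediate, and everything reduces to bounding $\norm{\nabla F(\bSigma)}_F$. Writing $\bDelta = \bSigma - \bS$ and subtracting the (vanishing) gradient at $\bS$, I would express the gradient as a single expectation of a rank–one matrix with a scalar weight,
\begin{equation*}
\nabla F(\bSigma) = \E_Q\Big(1 - \sqrt{\tfrac{\bx^\top\bS\bx}{\bx^\top\bSigma\bx}}\Big)\bx\bx^\top =: \E_Q\, h(\bx)\,\bx\bx^\top,
\end{equation*}
so that the whole estimate hinges on controlling $h(\bx)$.

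The first key step is a pointwise bound on $h$. Rationalizing, $h(\bx) = \frac{\bx^\top\bSigma\bx - \bx^\top\bS\bx}{\sqrt{\bx^\top\bSigma\bx}\,(\sqrt{\bx^\top\bSigma\bx} + \sqrt{\bx^\top\bS\bx})} = \frac{\bx^\top\bDelta\bx}{\sqrt{\bx^\top\bSigma\bx}\,(\sqrt{\bx^\top\bSigma\bx} + \sqrt{\bx^\top\bS\bx})}$. Dropping the nonnegative term $\sqrt{\bx^\top\bS\bx}$ and using the eigenvalue hypothesis $\bx^\top\bSigma\bx \ge m\,\norm{\bP_{\bSigma}\bx}^2$ (valid because the smallest positive eigenvalue of $\bSigma$ is at least $\lambda_r(\bSigma)\ge m$ and $\bSigma$ is supported on $\Sp(\bSigma)$) gives $|h(\bx)| \le \frac{|\bx^\top\bDelta\bx|}{m\,\norm{\bP_{\bSigma}\bx}^2}$. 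This is precisely where the assumption $\lambda_r(\bSigma)\ge m$ enters, and it introduces the delicate small-denominator factor $\norm{\bP_{\bSigma}\bx}^{-2}$.

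Next I would pass to the Frobenius norm through its dual form, $\norm{\nabla F(\bSigma)}_F = \sup_{\norm{\bV}_F\le 1}\E_Q\, h(\bx)\,\bx^\top\bV\bx$, insert the pointwise bound, and split by Cauchy–Schwarz into a factor carrying the numerator $\bx^\top\bDelta\bx$ and a factor carrying $\bx^\top\bV\bx$, both weighted against $\norm{\bP_{\bSigma}\bx}^{-2}$. What remains is a Gaussian moment computation: I would decompose $\bx$ into its projection onto $\Sp(\bSigma)$ — whose squared norm is a $\chi^2_r$ variable, independent of the orthogonal part and spherically symmetric inside the subspace — and use $\rank(\bDelta)\le 2r$ to bound the trace of each block of $\bDelta$ by $\sqrt{r}\,\norm{\bDelta}_F$. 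The decisive quantity is the inverse moment $\E\,\norm{\bP_{\bSigma}\bx}^{-2} = 1/(r-2)$, which is finite \emph{exactly} when $r\ge 3$; this single fact is simultaneously the source of the rank restriction in the lemma and of the $1/\sqrt{r}$ scaling, while the positive powers of $d$ (culminating in the stated $d^{3/2}$) come from the fourth-moment terms $\E\,\norm{\bx}^4/\norm{\bP_{\bSigma}\bx}^2$ and $\E(\bx^\top\bV\bx)^2 = (\Tr\bV)^2 + 2\norm{\bV}_F^2 = O(d)$ generated by the two numerators.

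I expect the genuine obstacle to be the integrability and sharp evaluation of the correlated ratio $\E\,(\bx^\top\bDelta\bx)^2/\norm{\bP_{\bSigma}\bx}^2$ (and its analogue with $\bV$): numerator and denominator are not independent, so the expectation cannot be factored directly, and one must instead condition on $\bP_{\bSigma}\bx$, exploit the spherical symmetry on $\Sp(\bSigma)$, and treat the off-diagonal blocks of $\bDelta$ carefully — verifying both that the integral converges (which forces $r\ge 3$ through $\E(\chi^2_r)^{-1}$) and that it scales correctly in $r$ and $d$. Once this inverse-moment estimate is in place, the remainder is routine bookkeeping with standard Gaussian and chi-squared moments; the constant is not optimized, and the stated $d^{3/2}/(m\sqrt{r})$ follows by combining the two factors.
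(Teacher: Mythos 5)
Your sketch is sound and, interestingly, it proves the lemma \emph{as literally stated}, which the paper's own proof does not. The paper bounds $\|\nabla F(\bSigma)\|_F \le \E_{Q}\,\bigl|1-\sqrt{(\bx^\top\bS\bx)/(\bx^\top\bSigma\bx)}\bigr|\,\|\bx\|^2$ and then applies the elementary inequality $|\sqrt{b}-\sqrt{a}|\le\sqrt{|b-a|}$ together with $|\bx^\top\bDelta\bx|\le\|\bDelta\|_F\|\bx\|^2$; this destroys linearity in $\bDelta=\bSigma-\bS$ and yields only the H\"older-type bound $\|\nabla F(\bSigma)\|_F\le \frac{d^{3/2}}{m\sqrt{r}}\|\bSigma-\bS\|_F^{1/2}$, which is the form announced in the paper's proof sketch of the main theorem (``$\|\nabla F(\bSigma)\|\lesssim\|\bSigma-\bS\|_F^{1/2}$'') and the form actually invoked in the local-strong-convexity argument (where the gradient is bounded by $\frac{d^{3/2}}{m\sqrt{r}}\sqrt{\rho}$); the first-power exponent in the lemma's display is evidently a typo relative to the paper's proof. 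Your rationalization $1-\sqrt{a/b}=(b-a)/\bigl(\sqrt{b}\,(\sqrt{b}+\sqrt{a})\bigr)$ keeps $\bx^\top\bDelta\bx$ to the first power, and the remaining plan — dual Frobenius norm, Cauchy--Schwarz against the weight $\|\bP_{\bSigma}\bx\|^{-2}$, splitting $\bx$ into independent Gaussian components on $\Sp(\bSigma)$ and its complement, and the inverse moment $\E(\chi^2_r)^{-1}=1/(r-2)$, finite exactly when $r\ge 3$ — does go through: the factor carrying $\bDelta$ is $O(\|\bDelta\|_F^2)$ uniformly in $d$ (the dangerous block $\bP_{\bSigma}^\perp\bDelta\bP_{\bSigma}^\perp=-\bP_{\bSigma}^\perp\bS\bP_{\bSigma}^\perp$ has rank at most $r$, so its trace costs only $\sqrt{r}\,\|\bDelta\|_F$), while the factor carrying the dual variable $\bV$ is $O(d/r)$ since $(\Tr(\bP_{\bSigma}^\perp\bV\bP_{\bSigma}^\perp))^2\le d\|\bV\|_F^2$. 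Carried out this way your argument gives $\|\nabla F(\bSigma)\|_F\le \frac{C\sqrt{d}}{m\sqrt{r}}\|\bSigma-\bS\|_F$ for an absolute constant $C$ — linear in $\|\bDelta\|_F$ and with a better dimensional factor than you anticipate (your $d^{3/2}$ comes from the cruder estimate $(\bx^\top\bDelta\bx)^2\le\|\bx\|^4\|\bDelta\|_F^2$, which is not needed). Both routes use $r\ge 3$ and $\lambda_r(\bSigma)\ge m$ in exactly the same way; what yours buys is genuine first-power (Lipschitz-type) control of the gradient near $\bS$, which is stronger locally and would simplify the radius bookkeeping downstream, at the cost of the correlated-ratio computation you correctly flag as the crux; what the paper's buys is a two-line pointwise estimate. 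The only caveat is the unoptimized absolute constant, which you acknowledge and which the paper likewise disregards.
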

\begin{proof}
We have that
\begin{align}
	\left\| \E_{Q} \sqrt{\frac{\bx^\top \bS \bx}{\bx^\top \bS \bx}} \bx\bx^\top -  \E_{Q} \sqrt{\frac{\bx^\top \bS \bx}{\bx^\top \bSigma \bx}} \bx\bx^\top \right\| &\leq \E_{Q}  \left|\frac{\sqrt{\bx^\top \bSigma \bx} - \sqrt{\bx^\top \bS \bx}}{\sqrt{\bx^\top \bSigma \bx}}\right|   \|\bx\|^2 \\ \nonumber
	&= d \E_{Q} \frac{1}{\sqrt{\bx^\top \bSigma \bx}} \sqrt{|\bx^\top (\bSigma - \bS) \bx|}\\ \nonumber
	&\leq d \|\bSigma - \bS\|_F^{1/2} \E_{Q} \frac{\|\bx\|}{\sqrt{\bx^\top \bSigma \bx}} 
\end{align}
As long as $r \geq 3$ and $\lambda_r(\bSigma) \geq m$, we have that
\begin{equation}
	 \E_{Q} \frac{\|\bx\|}{\sqrt{\bx^\top \bSigma \bx}}  \leq \frac{1}{m}  \sqrt{1+\frac{d-r}{r}}= \frac{\sqrt{d}}{m\sqrt{r}}. 
\end{equation} 

\end{proof}
Thus, as $\bSigma \to S$ for $\rank(\bSigma) \geq 3$, $\|\nabla F(\bSigma)\|_F \to 0$.

\subsubsection{First Order Optimality of the Low-Rank Barycenter}
\label{subsec:foo}

Moving on to the \BW\ geometry, we first show that the low-rank barycenter is a first-order stationary point with respect to \BW\ distance. While this follows from the previous results due to the fact that it is a global minimum over $\bbS_+^d$, we take a different approach here based on the fixed point iteration of~\cite{agueh2011barycenter}. This fixed point iteration forms the basis of our efficient low-rank algorithm.

By \cite{agueh2011barycenter}, a sufficient condition for $ \gamma_{\bSigma}$ to solve \eqref{eq:bary}, is
\begin{equation}\label{eq:1dident}
    \E_{\bx\bx^\top \sim Q} \frac{\xxt}{\Tr(\bx \bx^\top \bSigma)^{1/2}} = \bI,
\end{equation}
where $\bI$ is the identity matrix in $\R^d$. Notice that this corresponds to the gradient $\nabla F$ being equal to $\bzero$. In the following, we let
\begin{equation}
    \tilde{T}(\bSigma):= \E_Q \frac{\xxt}{\Tr(\xxt \bSigma)^{1/2}} 
\end{equation}

Consider the map corresponding to gradient descent with step size 1,
\begin{equation}
	\bSigma_{k+1} = \Big(\tilde{T}(\bSigma_k) \Big) \bSigma_k \Big(\tilde{T}(\bSigma_k)\Big).
\end{equation}
The corresponding fixed point equation is
\begin{equation}\label{eq:implementedfixedpoint}
	\bSigma = \Big( \tilde{T}(\bSigma) \Big) \bSigma \Big(\tilde{T}(\bSigma)\Big).
\end{equation}
\citet{chewi2020gradient} prove that the operator norm $\|\cdot\|_2$ is convex along generalized geodesics. Therefore,\eqref{eq:implementedfixedpoint} maps a compact subset of $\mathbb{S}_+$ to itself, and one can apply the Brouwer fixed point theorem to guarantee a solution. 
The fixed point satisfies a restricted first order condition given by
\begin{equation}\label{eq:restrfoc}
    \bP_{\Sp(\bSigma)} \tilde{T}(\bSigma) \bP_{\Sp(\bSigma)}= \bP_{\Sp(\bSigma)} = \id_{\Sp(\bSigma)} = \bP_{\Sp(\bSigma)}.
\end{equation}
Notice that if $\bSigma$ is full rank, then this implies \eqref{eq:1dident}. More generally, we need an extra condition on top of first order optimaltiy to guarantee that $\bSigma$ is a barycenter. 

\begin{pro}[Sufficient Condition for Barycenter]\label{}
	If $\bSigma$ satisfies the first order conditions 
	\begin{align}\label{eq:foc11}
		\bP_{\bSigma} \tilde{T}(\bSigma) \bP_{\bSigma}&= \bP_{\bSigma}, \\
		\tilde{T}(\bSigma) &\preceq \bI.\label{eq:foc21}
	\end{align}
	then $\bSigma$ is a barycenter of $Q$. Furthermore, in our observation model where $Q$ is the law of $\sqrt{\bx^\top \bS \bx} \bx\bx^\top$, for $\bx \sim N(\bzero, \bI)$, $\bS$ satisfies \eqref{eq:foc11} and \eqref{eq:foc21}.
\end{pro}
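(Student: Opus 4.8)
The plan is to recognize the two displayed first-order conditions as precisely the Karush--Kuhn--Tucker conditions for minimizing the convex functional $F$ over the PSD cone $\bbS_+^d$, and then to dispatch the ``furthermore'' part by a one-line moment computation at $\bSigma=\bS$. The structural fact that makes everything work is that $F$ is \emph{Euclidean} convex: each summand
\[
\bSigma \mapsto \dbw^2(\wwt,\bSigma) = \|\bw\|^2 + \Tr(\bSigma) - 2\sqrt{\bw^\top \bSigma \bw}
\]
is convex, being an affine term plus the composition of the convex map $t\mapsto -\sqrt{t}$ with the linear map $\bSigma\mapsto \bw^\top\bSigma\bw$; hence $F$ is convex as an average of convex functions, and $\bI-\tilde T(\bSigma)$ is a (sub)gradient of $F$ at $\bSigma$, as already noted in the main text. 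Because $Q$ is supported on Gaussians, the barycenter in the sense of \eqref{eq:bary} is attained on a centered Gaussian, so it suffices to exhibit $\bSigma$ as a global minimizer of $F$ over $\bbS_+^d$.

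For the first claim, I would prove the variational inequality $\langle \bI - \tilde T(\bSigma),\, \bSigma' - \bSigma\rangle \ge 0$ for every $\bSigma'\in\bbS_+^d$, which for a convex function is equivalent to global optimality over the convex set $\bbS_+^d$. I would split the left-hand side as $\langle \bI - \tilde T(\bSigma),\bSigma'\rangle - \langle \bI - \tilde T(\bSigma),\bSigma\rangle$. The second (complementary-slackness) term vanishes: using the idempotency identity $\bP_{\bSigma}\bSigma=\bSigma$ together with condition \eqref{eq:foc11},
\[
\Tr\big(\tilde T(\bSigma)\bSigma\big) = \Tr\big(\bP_{\bSigma}\tilde T(\bSigma)\bP_{\bSigma}\bSigma\big) = \Tr\big(\bP_{\bSigma}\bSigma\big) = \Tr(\bSigma),
\]
so $\langle \bI - \tilde T(\bSigma),\bSigma\rangle = 0$. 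The first term is nonnegative because condition \eqref{eq:foc21} gives $\bI-\tilde T(\bSigma)\succeq 0$ and $\bSigma'\succeq 0$, and the trace of a product of two PSD matrices is nonnegative. The variational inequality follows, and convexity of $F$ upgrades it to $\bSigma$ being a global minimizer, i.e.\ a barycenter of $Q$.

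For the second claim, I would simply evaluate $\tilde T$ at $\bS$. In the Gaussian model the operative form of the map is $\tilde T(\bSigma)=\E_{\bx\sim N(\bzero,\bI)}\sqrt{(\bx^\top\bS\bx)/(\bx^\top\bSigma\bx)}\,\bx\bx^\top$, so that
\[
\tilde T(\bS) = \E_{\bx\sim N(\bzero,\bI)}\sqrt{\tfrac{\bx^\top\bS\bx}{\bx^\top\bS\bx}}\,\bx\bx^\top = \E\,\bx\bx^\top = \bI .
\]
Both conditions are then immediate: $\bP_{\bS}\tilde T(\bS)\bP_{\bS}=\bP_{\bS}\bP_{\bS}=\bP_{\bS}$ by idempotency of the projection, and $\tilde T(\bS)=\bI\preceq\bI$.

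The only genuine subtlety, and hence the step I expect to require the most care, lies in the first claim: at a rank-deficient $\bSigma$ the functional $F$ is nonsmooth, so I must justify that $\bI-\tilde T(\bSigma)$ is a bona fide convex subgradient (differentiating under the expectation on the full-measure event $\{\bx:\bx^\top\bSigma\bx>0\}$ and invoking convexity of each summand), which is what allows the subgradient inequality to transfer global optimality. This is also the only place where the low-rank nature of the problem intervenes: condition \eqref{eq:foc11} is an equation living only on the range of $\bSigma$, and it is exactly the identity $\bP_{\bSigma}\bSigma=\bSigma$ that lets this restricted condition deliver complementary slackness, with all directions transverse to the range of $\bSigma$ being controlled by the global inequality \eqref{eq:foc21}.
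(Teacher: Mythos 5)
Your proposal is correct and takes essentially the same route as the paper's proof: the paper likewise combines \eqref{eq:foc11} (which makes the term $\langle \bI-\tilde T(\bSigma),\bSigma\rangle$ vanish, so that the directional derivative toward any $\bSigma_1\in\bbS_+^d$ reduces to $\Tr\bigl((\bI-\tilde T(\bSigma))\bSigma_1\bigr)$) with \eqref{eq:foc21} (which makes that trace nonnegative) and then invokes the Euclidean convexity of $F$ to conclude global optimality, finishing the observation-model claim with the same computation $\tilde T(\bS)=\E\,\bx\bx^\top=\bI$. If anything, your version is slightly more careful than the paper's, since you explicitly justify the complementary-slackness identity via $\bP_{\bSigma}\bSigma=\bSigma$ and flag the subgradient issue at rank-deficient $\bSigma$, which the paper passes over.
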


\begin{proof}
    
The first equation, \eqref{eq:foc11}, guarantees that $\bSigma$ is a fixed point satisfying \eqref{eq:implementedfixedpoint}. On the other hand, \eqref{eq:foc21} guarantees that all directional derivatives are positive (and that $\tilde{T}(\bSigma) \preceq \bI$), and thus $\bSigma$ is a local minimum. To see this, suppose that $\bSigma = \bSigma_0$ is a fixed point and $\bSigma_1$ is another PSD matrix.
The directional derivatives are
\begin{align}
    \partial_t F(\bSigma_t) |_{t=0} &= \Tr\Big( (\bI - \tilde T(\bSigma)) (\bSigma_1 - \bSigma_0 )\Big) \\ \nonumber
    &= \Tr\Big( (\bI - \tilde T(\bSigma)) \bSigma_1  \Big).
\end{align}
If $\tilde T(\bSigma) \not \preceq \bI$, then there is a $\bSigma_1$ such that this is less than zero, and therefore $\bSigma_0$ cannot be a minimum. On the other hand, if $\tilde T(\bSigma) \preceq \bI$, then all directional derivatives are positive. Combined with the Euclidean convexity result in the paper, this proves that $\bSigma$ would be the global minimum and thus the barycenter.

Finally, in our observation model, we have 
\begin{equation}
    \tilde{T}(\bS) = \bI,
\end{equation}
and so both \eqref{eq:foc11} and \eqref{eq:foc21} hold.
\end{proof}

Notice alternatively that this also implies that the barycenter is the only stationary point in the set where $\tilde{T}(\bSigma) \succeq \bI$. In particular, this is because at all points where $\tilde{T}(\bSigma) \neq \bI$, one can find a direction of decrease.

\subsubsection{Smoothness and a Descent Lemma}
\label{subsec:desclem}

The barycenter functional is \emph{smooth}, as is shown in~\cite{chewi2020gradient}. This result extends to non absolutely continuous measure by noting that 1) nonnegative curvature extends to measures that are not absolutely continuous (with respect to Lebesgue, see~\cite{ambrosio2008gradient} Lemma 7.3.2), and 2) the characterization of the derivative of the Wasserstein distance extends to cases where the measures are not absolutely continuous (see \cite{ambrosio2008gradient} Lemma 7.3.6).

With smoothness, we have the following descent lemma over fixed rank \BW\ space. Note that such a descent lemma is standard in the analysis of gradient descent methods (see \citet[Theorem 2.1.5]{nesterov2004introductory}). Here, $\bSigma^+$ is the update after one gradient step.
\begin{lem}\label{lem:descent}
	Any $\bSigma \in \bbS_+^d$ and $\bSigma^+ = \tilde{T}(\bSigma) \bSigma \tilde{T}(\bSigma)$, it holds that
	\begin{equation}
		F(\bSigma^+) - F(\bSigma) \leq -\frac{1}{2} \left\| \bI - \tilde{T}(\bSigma) \right\|_{\gamma_{\bSigma}}^2.
	\end{equation}
\end{lem}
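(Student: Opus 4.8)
The plan is to obtain the descent lemma directly from the geodesic smoothness of the barycenter functional $F$, mirroring the classical Euclidean argument in which an $L$-smooth function decreases by at least $\frac{1}{2L}\|\nabla f\|^2$ after a gradient step of size $1/L$. The key input, which I would take as given, is the one-smoothness of $F$ established in \citep{chewi2020gradient}: Wasserstein space has nonnegative curvature, so each map $\bSigma\mapsto\frac12\dbw^2(\bSigma,\bX)$ is geodesically semiconcave from above with constant $1$, and averaging over $Q$ preserves this. Concretely, for any $\bSigma,\bSigma'\in\bbS_+^d$ with $\bT$ the \BW\ optimal transport map from $\bSigma$ to $\bSigma'$ (so that $\bT\bSigma\bT=\bSigma'$ and $\|\bT-\bI\|_{\bSigma}^2=\dbw^2(\bSigma,\bSigma')$), this reads
\begin{equation*}
F(\bSigma') \le F(\bSigma) + \langle \nabla F(\bSigma),\, \bT-\bI\rangle_{\bSigma} + \frac{1}{2}\,\|\bT-\bI\|_{\bSigma}^2 .
\end{equation*}

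The second step is to recognize that the full gradient step $\bSigma^+=\tilde T(\bSigma)\,\bSigma\,\tilde T(\bSigma)$ is exactly the \BW\ exponential step from $\bSigma$ in the direction $-\nabla F(\bSigma)$. Since $\tilde T(\bSigma)=\E_Q \bX/\sqrt{\Tr(\bX\bSigma)}$ is an average of positive semidefinite matrices, it is itself symmetric and PSD; by the uniqueness of monotone transport maps between (possibly degenerate) Gaussians, $\tilde T(\bSigma)$ is therefore the optimal transport map from $\bSigma$ to $\bSigma^+$, i.e. $\bT=\tilde T(\bSigma)$ when $\bSigma'=\bSigma^+$. Using $\nabla F(\bSigma)=\bI-\tilde T(\bSigma)$ then gives $\bT-\bI=\tilde T(\bSigma)-\bI=-\nabla F(\bSigma)$.

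Substituting $\bT-\bI=-\nabla F(\bSigma)$ into the smoothness inequality collapses the first- and second-order terms:
\begin{equation*}
F(\bSigma^+) \le F(\bSigma) - \langle \nabla F(\bSigma),\nabla F(\bSigma)\rangle_{\bSigma} + \frac{1}{2}\|\nabla F(\bSigma)\|_{\bSigma}^2 = F(\bSigma) - \frac{1}{2}\|\nabla F(\bSigma)\|_{\bSigma}^2 .
\end{equation*}
Since $\nabla F(\bSigma)=\bI-\tilde T(\bSigma)$ and $\|\cdot\|_{\bSigma}^2=\E_{\bz\sim N(\bzero,\bSigma)}\|\cdot\,\bz\|_2^2$ is the \BW\ metric norm, this is precisely $F(\bSigma^+)-F(\bSigma)\le-\frac12\|\bI-\tilde T(\bSigma)\|_{\bSigma}^2$, the claimed bound.

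The algebra above is routine; the genuine obstacle is justifying the smoothness inequality in the rank-deficient regime, because $\bSigma$ and the matrices in the support of $Q$ are singular and the results of \citep{chewi2020gradient} are stated for absolutely continuous measures. I would address this exactly as flagged immediately before the lemma: nonnegative curvature of Wasserstein space persists for measures that are not absolutely continuous by \citep[Lemma 7.3.2]{ambrosio2008gradient}, and the first-variation characterization of the squared Wasserstein distance that underlies the quadratic upper bound persists by \citep[Lemma 7.3.6]{ambrosio2008gradient}; averaging over $Q$ preserves both properties, so the inequality holds on all of $\bbS_+^d$. A secondary point requiring care is confirming that $\tilde T(\bSigma)$ is the \emph{optimal} map and not merely some matrix conjugating $\bSigma$ to $\bSigma^+$, which follows from its symmetry and positive semidefiniteness together with the restriction to $\Sp(\bSigma)$ used in the fixed-rank geodesic description.
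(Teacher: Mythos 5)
Your proposal is correct and follows essentially the same route as the paper: the paper proves this lemma by invoking the $1$-smoothness of the barycenter functional from \citet{chewi2020gradient}, extended to non--absolutely-continuous measures via \citet[Lemmas 7.3.2 and 7.3.6]{ambrosio2008gradient}, and then applying the standard descent argument \citep[Theorem 2.1.5]{nesterov2004introductory} with unit step, exactly as you do. Your additional verification that $\tilde{T}(\bSigma)$ is the optimal map (symmetric PSD, hence the gradient of a convex function restricted to $\Sp(\bSigma)$) is a detail the paper leaves implicit, and it is handled correctly.
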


\subsubsection{Local Geodesic Strong Convexity}
\label{subsec:bwsc}

We now prove local strong convexity in the population setting. By the previous section, we know that $S$ is the unique barycenter of $Q$ since the variance inequality holds for arbitrarily small $m$ and $\rho$ and arbitrarily large $M$.
\begin{pro}\label{prop:bwsc}
Let $y = \langle \bx\bx^\top , \bS\rangle$, $X \sim N(\bzero, \bI)$, $\rank(\bS) \geq 3$, and $m, M$ be such that $m < \lambda_r(\bS) \leq \lambda_1(\bS) < M$. Then, if
\begin{equation}
	\rho \leq \frac{m^2 r}{d^3} \frac{c_1^2 \beta^4 m^2}{36  M^{3}}
\end{equation}
$F$ is locally geodesically strongly convex over 
\begin{equation}
	\cS(m, M) \cap \{\bSigma \in \bbS_+^{d,r} : \|\bSigma - \bS\|_F \leq \rho\}.
\end{equation}
\end{pro}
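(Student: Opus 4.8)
The plan is to compute the second derivative of $F$ along a \BW\ geodesic by exploiting the fact that, in the fixed-rank factorization, \BW\ geodesics are images of straight lines in factor space. Concretely, I would write the geodesic joining $\bSigma_0,\bSigma_1 \in \bbS_+^{d,r}$ as $\bSigma_t = \bU_t\bU_t^\top$ with $\bU_t = (1-t)\bU_0 + t\bU_1$ and $\bU_1 = \bT\bU_0$ the aligned (horizontal) representative, so that $\bV := \bU_1 - \bU_0 = (\bT-\bI)\bU_0$ satisfies $\|\bV\|_F^2 = \dbw^2(\bSigma_0,\bSigma_1)$, the squared geodesic speed (this identity only depends on $\bU_0\bU_0^\top = \bSigma_0$ and matches the \BW\ distance formula). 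Since $\bU_t$ is linear in $t$, setting $G(\bU) = F(\bU\bU^\top)$ gives $\partial_t^2 F(\bSigma_t) = \partial_t^2 G(\bU_t)$, and the second-order chain rule yields the decomposition
\begin{equation*}
\partial_t^2 F(\bSigma_t) = \big\langle \nabla^2 F(\bSigma_t)[\dot\bSigma_t],\, \dot\bSigma_t\big\rangle + \big\langle \nabla F(\bSigma_t),\, 2\bV\bV^\top\big\rangle,
\end{equation*}
where $\dot\bSigma_t = \bV\bU_t^\top + \bU_t\bV^\top$ and $\nabla^2 F$ is the Euclidean Hessian. Geodesic strong convexity then amounts to showing the right-hand side is at least a positive multiple of $\|\bV\|_F^2$.

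For the first (main) term, I would invoke the Euclidean strong convexity of Lemma~\ref{lem:popesc}: provided the whole geodesic remains in $\cS(0,M)$ — which holds because the operator norm is convex along \BW\ geodesics \citep{chewi2020gradient}, so $\lambda_1(\bSigma_t) \le \max\{\lambda_1(\bSigma_0),\lambda_1(\bSigma_1)\} \le M$ — the Hessian term is bounded below by $\tfrac{c_1\beta^2 d}{6M^{3/2}}\|\dot\bSigma_t\|_F^2$. It remains to convert this into geodesic speed: using horizontality, i.e.\ that $\bV^\top\bU_t$ is symmetric (which follows from $\bT = \bT^\top$), a direct expansion gives $\|\dot\bSigma_t\|_F^2 = 2\Tr(\bV^\top\bV\,\bU_t^\top\bU_t) + 2\|\bV^\top\bU_t\|_F^2 \ge 2\lambda_r(\bSigma_t)\|\bV\|_F^2 \ge 2m\|\bV\|_F^2$, so the main term is at least $\tfrac{c_1\beta^2 dm}{3M^{3/2}}\|\bV\|_F^2$.

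The second (correction) term is indefinite and is where the restriction $\|\bSigma-\bS\|_F \le \rho$ enters. Bounding it in absolute value by $2\|\nabla F(\bSigma_t)\|_F\,\|\bV\|_F^2$ and invoking the smoothness lemma (valid since $r \ge 3$), $\|\nabla F(\bSigma_t)\|_F \le \tfrac{d^{3/2}}{m\sqrt r}\|\bSigma_t-\bS\|_F^{1/2} \le \tfrac{d^{3/2}}{m\sqrt r}\rho^{1/2}$, shows the correction is at most $\tfrac{2d^{3/2}\rho^{1/2}}{m\sqrt r}\|\bV\|_F^2$. Choosing $\rho$ small enough that this is at most half the main term forces a threshold of the form $\rho \lesssim c_1^2\beta^4 m^4 r/(d^3 M^3)$ (tracking constants, and using $\|\bSigma_t-\bS\|_F \le \rho$ uniformly along the geodesic), leaving $\partial_t^2 F(\bSigma_t) \ge \tfrac{c_1\beta^2 dm}{6M^{3/2}}\|\bV\|_F^2$, which is precisely geodesic strong convexity with constant $\cC = O(c_1 m/M^{3/2})$.

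The main obstacle is the indefinite curvature term $\langle \nabla F(\bSigma_t), 2\bV\bV^\top\rangle$: since $\bV\bV^\top \succeq 0$ it can be genuinely negative, reflecting the positive curvature of \BW\ space, and there is no hope of controlling it globally. What makes the argument work is that this term is proportional to the gradient, which vanishes at the minimizer $\bS$; the H\"older smoothness estimate (with positive exponent $1/2$, whose finiteness is exactly what forces $r \ge 3$) quantifies this vanishing and lets the Euclidean strong convexity dominate in a neighborhood. A secondary point requiring care is verifying that the geodesic stays in both $\cS(0,M)$ and the $\rho$-ball so that the two estimates apply uniformly in $t$ — the former via operator-norm convexity, the latter by a mild shrinking of $\rho$.
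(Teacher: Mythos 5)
Your proposal is correct and takes essentially the same route as the paper's proof: the identical decomposition of $\partial_t^2 F$ along the \BW\ geodesic into the RIP-controlled quadratic term (lower bounded by $\frac{c_1\beta^2 m}{3M^{3/2}}\dbw^2(\bSigma_0,\bSigma_1)$, modulo a stray factor of $d$ that the paper itself is inconsistent about) and the indefinite gradient term $\langle \nabla F(\bSigma_t), 2\bV\bV^\top\rangle$, which is tamed by the same H\"older-$1/2$ smoothness bound $\|\nabla F\|_F \le \frac{d^{3/2}}{m\sqrt r}\sqrt{\rho}$ and the same choice of $\rho$. The only cosmetic differences are that you reach the decomposition via the linear factor path $\bU_t=(1-t)\bU_0+t\bU_1$ and a chain rule, reusing Lemma~\ref{lem:popesc} as a Hessian bound where the paper computes $\partial_t^2 F(\bSigma_t)$ directly and re-runs the RIP estimate (evaluating only at $t=0$), and that your attention to uniformity in $t$ (operator-norm convexity along geodesics, shrinking $\rho$) is if anything more careful than the paper's argument.
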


\begin{proof}

Fix $\bSigma_0, \bSigma_1 \in  \cS(m,M) \cap \{\bSigma \in \bbS_+^{d,r} : \dbw^2(\bSigma, \bSigmabar) \leq \rho\} $.
	
	Suppose there exists a transport map $T$ between $\bSigma_0$ and $\bSigma_1$, which we are guaranteed for $\rho$ sufficiently small. Then for the \BW\ geodesic $\bSigma_t = ((1-t)\bI +t\bT)\bSigma_0((1-t)\bI + t\bT)$, we can compute
\begin{align}
	\partial_t^2 F(\bSigma_t)|_{t=s} &= \E_{\xxt \sim Q} \Tr[(\bT - \bI)\bSigma_0(\bT - \bI)]+ 2\frac{\Tr(\xxt (\bT - \bI)\bSigma_0)^2}{(\Tr(\xxt \bSigma_s))^{3/2}} \\ \nonumber
	&- \Tr \left[ \frac{\xxt (\bT - \bI) \bSigma_0(\bT - \bI)}{\sqrt{\Tr(\xxt \bSigma_s)}} \right].
\end{align}
Some manipulation when $s=0$ yields
\begin{align}\label{eq:secder_bwlb}
	\partial_t^2 F(\bSigma_t)|_{t=0} &= \Big \langle (\bT - \bI) \bSigma_0 (\bT - \bI), \nabla F(\bSigma_0) \Big \rangle + \E_{\xxt \sim Q}2\frac{\Tr(\xxt (\bT - \bI)\bSigma_0)^2}{(\Tr(\xxt \bSigma_0))^{3/2}} \\ \nonumber
	&\geq -\dbw^2(\bSigma_0, \bSigma_1) \|\nabla F(\bSigma_0)\|_F + \E_{\xxt \sim Q}2\frac{\Tr(\xxt (\bT - \bI)\bSigma_0)^2}{(\Tr(\xxt \bSigma_0))^{3/2}}. 
\end{align}
The last term satisfies the lower bound
\begin{align}
	\E_{\xxt \sim Q}2\frac{\Tr(\xxt (\bT - \bI)\bSigma_0)^2}{(\Tr(\xxt \bSigma_0))^{3/2}} &\geq \frac{c_1 \beta^2 }{3  M^{3/2}} \|(\bT - \bI)\bSigma_0\|_F^2 \\ \nonumber
	&\geq \frac{c_1 \beta^2 m}{3  M^{3/2}}\|(\bT - \bI)\bSigma_0^{1/2}\|_F^2 \\ \nonumber
	&= \frac{c_1 \beta^2 m}{3  M^{3/2}} \dbw^2(\bSigma_0, \bSigma_1)
\end{align}

On the other hand, by our Euclidean arguments, $\bS$ is the unique point such that $\nabla F = \bzero$. We can upper bound $\|\nabla F(\bSigma)\|_F$ on the set $\{\bSigma : \|\bSigma - \bS\| \leq \rho\}$ by
\begin{equation}
    \|\nabla F(\bSigma)\|_F \leq \frac{d^{3/2}}{m\sqrt{r}} \sqrt{\rho}.
\end{equation}

Thus, if 
\begin{equation*}
    \rho \leq \frac{m^2 r}{d^3} \frac{c_1^2 \beta^4 m^2}{36  M^{3}}
\end{equation*}
then 
\begin{equation}\label{eq:locsc}
    \partial_t^2 F(\bSigma_t)|_{t=0} \geq \frac{c_1 \beta^2 m}{6  M^{3/2}} \dbw^2(\bSigma_0, \bSigma_1).
\end{equation}
for all $\bSigma_0 \in \{\bSigma : \|\bSigma - \bS\|_F \leq \rho\}$. Noticing the fact that $\dbw(\bSigma, \bS) \leq \|\bSigma - \bS\|_F$, we have local strong geodesic convexity in a ball around $\bS$.

\end{proof}

We note that the same proof extends this to the sample setting with high probability in an analogous way to how Lemma \ref{lem:sampesc} extends Lemma \ref{lem:popesc} to the sample setting.

\subsubsection{Proof of Theorem \ref{thm:main}}
\label{subsec:mainproof}

Suppose that we initialize such that $m \leq \lambda_r(\bU_0 \bU_0^\top) \leq \lambda_1(\bU_0\bU_0^\top) \leq M$, and 
\begin{align}
    F(\bU_0\bU_0^\top) - F(\bS) &\leq \Big( \frac{m^2 r}{d^3} \frac{c_1^2 \beta^4 m^2}{36  M^{3}}\Big)^2 \cdot \frac{c_1 \beta^2 d}{6M^{3/2}} \\ \nonumber
    &= \frac{c_1^{5}m^8r^2 \beta^{10}}{6^5 M^{15/2}d^5}.
\end{align} 
First, by Euclidean strong convexity, we have
\begin{equation}
	\|\bSigma - \bS\|_F \leq \sqrt{(F(\bSigma) - F(\bS)) \frac{6M^{3/2}}{c_1 \beta^2 d}}.
\end{equation}
Therefore, the initialization condition on $F(\bU_0 \bU_0^\top) - F(\bS)$ is enough to guarantee that 
\begin{align}
	\|\bU_0 \bU_0^\top - \bS\| \leq  \frac{m^2 r}{d^3} \frac{c_1^2 \beta^4 m^2}{36  M^{3}}.
\end{align}
Furthermore, by Lemma \ref{lem:descent}, $F(\bU_k \bU_k^\top) \leq F(U_{k-1}U_{k-1}^\top)$ for all $k$, and thus
\begin{equation}
	\|\bU_k \bU_k^\top - \bS\| \leq  \frac{m^2 r}{d^3} \frac{c_1^2 \beta^4 m^2}{36  M^{3}}
\end{equation} 
for all $k$, and the iterates remain in the ball of strong geodesic convexity.

Using the strong geodesic convexity of Proposition \ref{prop:bwsc} and Lemma \ref{lem:descent}, it is then a standard argument to show linear convergence. We can apply, for example, \cite[Theorem 15]{zhangSra16a} to yield the result.

\subsection{Proof of Theorem \ref{thm:BWSGD}}
\label{sec:sgdconv}

We give a proof of this theorem by following \cite{ghadimi2013stochastic}.

Let $T_{{\bSigma_0} \to \bSigma_1} = T_{\gamma_{\bSigma_0} \to \gamma_{\bSigma_1}}$.
Let 
\begin{align}
	\bSigma_1 = &\Big((1-\eta) I + \eta \frac{\bx_1 \bx_1^\top}{\Tr(\bx_1 \bx_1^\top \bSigma_0)^{1/2}}\Big) \bSigma_0\\ \nonumber
	&\cdot \Big((1-\eta) I + \eta \frac{\bx_1 \bx_1^\top}{\Tr(\bx_1 \bx_1^\top \bSigma_0)^{1/2}}\Big).
\end{align}
or
\begin{equation}
	\gamma_{\bSigma_1} = [(1-\eta) \id + \eta T_{{\bSigma_0} \to {\bx_1 \bx_1^\top}}]_{\#} \gamma_{\bSigma_0}.
\end{equation}

Due to nonnegative curvature, we have
\begin{align}
	W_2^2(\gamma_{\bSigma_1}, \gamma_{\bx\bx^\top}) &\leq \|T_{\bSigma_0 \to \bSigma_1} - T_{\bSigma_0 \to \bx\bx^\top}\|_{\bSigma_0}^2 \\ \nonumber
	&= \| (1-\eta) \bI + \eta T_{{\bSigma_0} \to {\bx_1 \bx_1^\top}} - T_{{\bSigma_0} \to {\xxt}}\|_{\bSigma_0}^2 \\ \nonumber
	&= \| (\bI -T_{{\bSigma_0} \to {\xxt}}) + \eta (T_{{\bSigma_0} \to {\bx_1 \bx_1^\top}} -  \bI ) \|_{\bSigma_0}^2 \\ \nonumber
	&= \| \bI -T_{{\bSigma_0} \to {\xxt}}\|_{\bSigma_0}^2 + \eta^2 \| \bI -T_{{\bSigma_0} \to {\bx_1 \bx_1^\top}}\|_{\bSigma_0}^2 - 2 \eta \langle \bI -T_{{\bSigma_0} \to {\xxt}}, \bI -T_{{\bSigma_0} \to {\bx_1 \bx_1^\top}} \rangle .
\end{align}
Taking the expectation with respect to $\bx\bx^\top$,
\begin{align}
	&F(\bSigma_1) - F(\bSigma_0) \leq - 2 \eta \langle \nabla F(\bSigma_0), \bI -T_{{\bSigma_0} \to {\bx_1 \bx_1^\top}} \rangle_{\bSigma_{0}} \\ \nonumber
	&+ \eta^2 \dbw^2(\bSigma_0, \bx_1 \bx_1^\top).
\end{align}
Summing over iterations, we find
\begin{align}
	&F(\bSigma_K) - F(\bSigma_0) \leq \\ \nonumber
	&- 2 \eta \sum_{k=1}^K \langle \nabla F(\bSigma_{k-1}), \bI -T_{{\bSigma_0} \to {\bx_k \bx_k^\top}} \rangle_{\bSigma_{k-1}} \\ \nonumber
	&+ \eta^2 \sum_{k=1}^K \dbw^2(\bSigma_{k-1}, \bx_k \bx_k^\top).
\end{align}
Taking the expectation (properly) with respect to $\bx_1 \bx_1^\top, \dots, \bx_k \bx_k^\top$, and using $\E [\dbw^2(\bSigma_{k-1}, \bx_k \bx_k^\top) |\bSigma_{k-1} ]\leq \bSigma^2$,
\begin{equation}
	\E F(\bSigma_K) - F(\bSigma_0) \leq - 2 \eta \sum_{k=1}^K \E  \| \nabla F(\bSigma_k)\|^2_{\bSigma_k} + \eta^2 K \bSigma^2.
\end{equation}
With this,
\begin{equation}
	 \sum_{k=1}^K \E  \| \nabla F(\bSigma_k)\|^2_{\bSigma_k} \leq \E \frac{F(\bSigma_K) - F(\bSigma_0)}{2 \eta} + \frac{\eta}{2} K \bSigma^2.
\end{equation}
Choosing $\eta = 1/\sqrt{K}$,
\begin{equation}
	 \frac{1}{K}\sum_{k=1}^K \E  \| \nabla F(\bSigma_k)\|^2_{\bSigma_k} \leq \E \frac{F(\bSigma_K) - F(\bSigma_0)}{2 \sqrt{K}} + \frac{\eta}{2 \sqrt{K}} .
\end{equation}
Within the sequence of iterates $\bSigma_0, \dots, \bSigma_K$, at least one element must have gradient bounded by $O(1/\sqrt{K})$. 


\subsection{Suboptimal Stationary Points}

There potentially exist stationary points that are not optimal in the low-rank case. In particular, with the parametrization $\bSigma = \bU \bU^\top$, these are points such that
\begin{equation}
	\E \frac{\sqrt{\bx^\top \bV\bV^\top \bx}}{\sqrt{\bx^\top \bU \bU^\top \bx}} \bx\bx^\top \bU = \bU,
\end{equation}
where $\bV \bV^\top = \bS$.

In the following, we will show that at least in the $r=1$ case, there are no local minima $\bu \bu^\top$ that are not orthogonal to $\bv\bv^\top$.


\subsubsection{No Local Minima in 1D Case}

\begin{thm}
	Consider the observation model with the rank one matrix $\bS = \bv\bv^\top$  and $y = \langle \bx\bx^\top, \bS \rangle$. Then, the only fixed points of the iteration
	\begin{equation}
		\E \sqrt{\frac{\bx^\top \bv\bv^\top \bx}{\bx^\top \bu \bu^\top \bx}} \bx\bx^\top \bu = \bu
	\end{equation}
	are orthogonal to $\bv$ or $\bu=\bv$. Since the points orthogonal to $\bv$ are local maxima, population gradient descent converges to $\bv$. 
\end{thm}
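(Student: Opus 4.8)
The plan is to reduce the fixed-point equation to an explicit identity in the plane $\spann(\bu,\bv)$ and solve it there. First I would simplify the iteration: since $\bS=\bv\bv^\top$ is rank one, $\sqrt{\bx^\top\bv\bv^\top\bx/\bx^\top\bu\bu^\top\bx}=|\bv^\top\bx|/|\bu^\top\bx|$ for $\bu\neq\bzero$, and the factor $\bx^\top\bu$ in the iteration cancels the denominator, since $\tfrac{|\bv^\top\bx|}{|\bu^\top\bx|}(\bu^\top\bx)=|\bv^\top\bx|\sign(\bu^\top\bx)$. Thus the fixed-point condition becomes
\begin{equation}
  \E\big[\,|\bv^\top\bx|\,\sign(\bu^\top\bx)\,\bx\,\big]=\bu,\qquad \bx\sim N(\bzero,\bI).
\end{equation}
Because $\bx$ is isotropic, I would split $\bx$ into its projection $\bx_\parallel$ onto $\spann(\bu,\bv)$ and the orthogonal part $\bx_\perp$; the scalars $\bu^\top\bx,\bv^\top\bx$ depend only on $\bx_\parallel$, which is independent of $\bx_\perp$, and $\E\bx_\perp=\bzero$, so the left-hand side lies in $\spann(\bu,\bv)$. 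Choosing the orthonormal frame $e_1=\bu/\|\bu\|$ and $e_2$ completing it in the plane, and writing $\bv=v_1e_1+v_2e_2$, the vector equation reduces to the two scalar equations $\E[|v_1x_1+v_2x_2|\,|x_1|]=\|\bu\|$ and $\E[|v_1x_1+v_2x_2|\,\sign(x_1)\,x_2]=0$, where $x_1,x_2\iid N(0,1)$.

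The next step is to evaluate these Gaussian expectations in closed form. I would apply Gaussian integration by parts (Stein's lemma) to $g(\bx)=|\bv^\top\bx|\sign(\bu^\top\bx)$, treating the non-smoothness distributionally via $\tfrac{d}{ds}\sign(s)=2\delta(s)$, so that $\nabla g=\sign(\bv^\top\bx)\sign(\bu^\top\bx)\,\bv+2\,|\bv^\top\bx|\,\delta(\bu^\top\bx)\,\bu$. Taking expectations and using the classical identity $\E[\sign(\bu^\top\bx)\sign(\bv^\top\bx)]=\tfrac{2}{\pi}\arcsin\rho$ with $\rho=\bu^\top\bv/(\|\bu\|\|\bv\|)$, together with a short conditional computation of the delta term giving $\E[|\bv^\top\bx|\,\delta(\bu^\top\bx)]=|v_2|/(\pi\|\bu\|)$, I obtain
\begin{equation}
  \E\big[\,|\bv^\top\bx|\,\sign(\bu^\top\bx)\,\bx\,\big]=\frac{2\arcsin\rho}{\pi}\,\bv+\frac{2|v_2|}{\pi}\,e_1.
\end{equation}

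Finally I would set this equal to $\bu=\|\bu\|e_1$ and read off the two components. The $e_2$-component yields $\tfrac{2}{\pi}\arcsin(\rho)\,v_2=0$, forcing either $\rho=0$ (i.e. $\bu\perp\bv$) or $v_2=0$ (i.e. $\bv\parallel\bu$); in the latter case the $e_1$-component forces $v_1=\pm\|\bu\|$, hence $\bu\bu^\top=\bv\bv^\top=\bS$, which is the recovery claim. This establishes the stated dichotomy. For the concluding sentence on convergence, I would show the orthogonal fixed points are unstable: either compute the second variation of $F$ along the \BW\ geodesic leaving such a point toward $\bv$ and verify it is negative, or combine the descent Lemma~\ref{lem:descent} with uniqueness of the global minimizer $\bv\bv^\top$ to exclude convergence to any orthogonal point unless $\bu_0\perp\bv$. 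The main obstacle is the rigorous handling of the delta-function term produced by differentiating $\sign(\bu^\top\bx)$ — one must justify the distributional integration by parts, or equivalently carry out the direct two-dimensional Gaussian integral — and, secondarily, upgrading the fixed-point classification to the stability statement that the orthogonal points are local maxima.
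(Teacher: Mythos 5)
Your proposal is correct, and after the shared opening reduction it takes a genuinely different route from the paper's proof. Both arguments begin identically: cancel the denominator to rewrite the fixed-point condition as $\E\big[|\bv^\top\bx|\,\sign(\bu^\top\bx)\,\bx\big]=\bu$ and use isotropy to reduce to the plane $\spann(\bu,\bv)$ with $\bu$ aligned to the first axis. From there the paper argues \emph{qualitatively}: it checks $\bu=\bv$ directly, computes the orthogonal case by hand (obtaining the fixed circle $\|\bu\|=2\|\bv\|/\pi$), and rules out all remaining $\bu$ by a symmetry argument showing the off-axis coordinate $\E\big[|v_1x_1+v_2x_2|\,\sign(x_1)\,x_2\big]$ is strictly positive whenever $v_1,v_2\neq 0$ --- no closed form is ever computed. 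You instead evaluate the expectation \emph{in closed form} via Stein's lemma with the distributional derivative of $\sign$, the arcsine (Grothendieck) identity, and a correct conditional computation of the surface term (I verified $\E\big[|\bv^\top\bx|\,\delta(\bu^\top\bx)\big]=|v_2|/(\pi\|\bu\|)$ and spot-checked the resulting formula against direct two-dimensional integrals). Your route buys more: the complete classification drops out of one identity, including the radius of the orthogonal fixed circle and the sign-ambiguous conclusion $\bu\bu^\top=\bv\bv^\top$ (the paper's statement ``$\bu=\bv$'' silently ignores $\bu=-\bv$), at the cost of justifying the distributional integration by parts --- which you rightly flag, and which can be bypassed by the direct planar Gaussian integral, since $g(\bx)=|\bv^\top\bx|\sign(\bu^\top\bx)$ is genuinely discontinuous across $\{\bu^\top\bx=0\}$ and ordinary Stein does not apply. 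One caveat on your concluding step: your fallback of ``descent lemma plus uniqueness of the global minimizer'' does not by itself exclude convergence to the orthogonal fixed circle, since descent methods can in principle converge to saddles or local maxima; you need the instability computation you mention first (second variation negative at orthogonal points), or a monotonicity argument for the component of $\bu_k$ along $\bv$. The paper is equally terse here --- it asserts the orthogonal points are local maxima and appeals to monotonicity without proof --- so this is a shared looseness rather than a defect specific to your argument.
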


\begin{proof}
	
In the 1-dimensional case, the stationary points are 
\begin{equation}
	\E \frac{|\bv^\top \bx|}{|\bu^\top \bx|} \bx\bx^\top \bu = \bu.
\end{equation}
Obviously, $\bu=\bv$ is a stationary point. 

On the other hand, suppose that $u \perp v$. Then, we can write
\begin{align*}
		\E \frac{|\bv^\top \bx|}{|\bu^\top \bx|} \bx\bx^\top \bu &= \E \frac{|\bv^\top \bx|}{|\bu^\top \bx|} (\bu \bu^\top /\|\bu\|^2 + \bv\bv^\top/\|\bv\|^2 + \wwt)\bx\bx^\top \bu \\ \nonumber
		&= \frac{1}{\|\bu\|^2}\E \frac{|\bv^\top \bx|}{|\bu^\top \bx|} \bu \bu^\top \xxt \bu + \frac{1}{\|\bv\|^2}E \frac{|\bv^\top \bx|}{|\bu^\top \bx|} \bv\bv^\top \bx \bx^\top \bu \\ \nonumber
		&= \Big(\frac{1}{\|\bu\|^2}\E |\bv^\top \bx| \E  |\bu^\top \bx|\Big) \bu + \frac{1}{\|\bv\|^2}\Big(E (\bv^\top \bx)^2  \E  \frac{\bx^\top \bu}{|\bu^\top \bx|} \Big)\bv \\ \nonumber
		&= \frac{2}{\pi} \|\bv\|  \frac{\bu}{\|\bu\|}.
\end{align*}
Therefore, for this to be a stationary point, we need
\begin{equation}
	\frac{2}{\pi} \frac{\|\bv\|}{\|\bu\|} = 1, \text{ or } \|\bu\| = \frac{2 \|\bv\|}{\pi}.
\end{equation}
Thus, any orthogonal vector with this length is a stationary point.

Finally, we show that there are no other stationary points.
For $u$ to be a fixed point, we need to have that
\begin{equation}\label{eq:1dfp}
	\E |\bv^\top \bx| \sign(\bu^\top \bx) \bx = u.
\end{equation}
Due to the rotational symmetry of the $\bx$'s, we can assume without loss of generality that only the first two coordinates of $\bv$, $\bu$ are nonzero. Furthermore, we can reduce to the two dimensional case, since the coordinates $x_3, \dots, x_d$ do not contribute to the expectation. 
Finally, we can assume without loss of generality that $u$ and $v$ are rotated so that $\bu =[a,0]^\top$. In this way, if $\bv = (v_1, v_2)$, \eqref{eq:1dfp} becomes
\begin{equation}
	\E |v_1 x_1 + v_2 x_2| \sign(x_1) [x_1, x_2]^\top.
\end{equation}
The first coordinate is obviously positive. If we can show that the second coordinate is nonzero, then $\bu$ cannot be a fixed point. We assume without loss of generality that $v_2 > 0$.

Thus we consider
\begin{equation}
	\E |v_1 x_1 + v_2 x_2|\sign(x_1) x_2
\end{equation} 
for i.i.d.~$N(0,1)$ random variables $x_1, x_2$. By symmetry, $(x_1, x_2)$ occurs with the same probability as $(-x_1, -x_2)$, and 
\begin{align}
	&|v_1 (-x_1) + v_2 (-x_2)|\sign(-x_1) (-x_2) =|v_1 x_1 + v_2 x_2|\sign(x_1) x_2.
\end{align}
Therefore, we can integrate over any half-plane, and so
\begin{align}
	&\E |v_1 x_1 + v_2 x_2|\sign(x_1) x_2 =\E_{(x_1, x_2)|x_1 > 0} |v_1 x_1 + v_2 x_2| x_2.
\end{align} 
For all fixed $x_1>0$, it is easy to see that 
\begin{equation}
	\E_{x_2|x_1 > 0} |v_1 x_1 + v_2 x_2| x_2 > 0,
\end{equation}
since $|v_1 x_1 + |v_2 x_2|| > |v_1x_1 - |v_2 x_2||$ when $v_2 > 0$.
Therefore the second coordinate cannot be zero, and this means that $u$ is not a fixed point.

Together with the monotonicity of gradient descent, which implies convergence to a fixed point, we conclude that population gradient descent in the 1D case converges to the underlying vector $\bv$.
\end{proof}


\subsection{Highly Local Recovery in Discrete 1D Case}

Suppose that we have a  discrete set of sensing vectors $\bx_1, \dots, \bx_n$ that satisfy the $\ell_2/\ell_1$-RIP condition, and that $\frac{1}{n} \sum_i \bx_i \bx_i^T = \bI$. Suppose that $\bS = \bv \bv^\top$. Define the set
\begin{equation}
    \cB = \{\bu : |\bu^\top \bx_i| > \epsilon \ \forall i\}.
\end{equation}
This is an open set. Over this set, we can bound
\begin{align}
    \|\nabla F(\bu) - \nabla F(\bv)\| &\lesssim \|\bv \bv^\top - \bu \bu^\top \|_F^{1/2} \frac{1}{n} \sum_i \frac{\|\bx_i\|}{|\bu^\top \bx_i|} \\ \nonumber
    &\leq C \|\bv \bv^\top - \bu \bu^\top \|_F^{1/2},
\end{align}
for some $C$ that depends on all parameters. Assume that $\bv \in \cB$. This implies that there exists a ball around $\bv$ that is contained within $\cB$. In this ball, we can get the local Euclidean smoothness bound given in Section \ref{subsec:esm}. In turn, this implies local geodesic strong convexity over a small subset of this ball, which implies local linear convergence.

\subsection{Lack of Strong Geodesic Convexity}

We illustrate here that the functional $F$, while being locally strongly convex about $\bS$ when restricted to rank $r$ matrices, is not locally strongly convex around $\bS$ for higher rank matrices.

Let $\bS$ be the matrix
\begin{equation}
    \bS = \begin{bmatrix} a & 0 \\ 0 & 0 \end{bmatrix},
\end{equation}
$\bSigma_0$ be the matrix
\begin{equation}
    \bSigma_0 = \begin{bmatrix} a & 0 \\ 0 & b \end{bmatrix},
\end{equation}
and let 
\begin{equation}
	\bT = \begin{bmatrix} 1 & 0 \\ 0 & 0\end{bmatrix}
\end{equation}
be the transport map from $\bSigma_0$ to $\bS$.
Let $\bSigma_t$ be the geodesic from $\bS$ Using the first display in \eqref{eq:secder_bwlb}, we find
\begin{align}
    \partial_t^2 F(\bSigma_t)|_{t=0} &= 
    \Big \langle (\bT - \bI) \bSigma_0 (\bT - \bI), \nabla F(\bSigma_0) \Big \rangle + \E_{\xxt \sim Q}2\frac{\Tr(\xxt (\bT - \bI)\bSigma_0)^2}{(\Tr(\xxt \bSigma_0))^{3/2}} \\ \nonumber
    &= \Big \langle \begin{bmatrix} 0 & 0 \\ 0 & b \end{bmatrix}, \nabla F(\bSigma_0) \Big \rangle + \E_{\xxt \sim Q}2\frac{\Tr(\xxt (\bT - \bI)\bSigma_0)^2}{(\Tr(\xxt \bSigma_0))^{3/2}}
\end{align}
By a trace inequality, 
 \begin{align}
	  2 \E_Q \frac{(\Tr[\bx\bx^\top (\bT - \bI)\bSigma_0^{1/2}])^2)}{\Tr[\bx\bx^\top \bSigma_0 ]^{3/2}} &\lesssim \|(\bT - \bI)\bSigma_0\|_F^2.
 \end{align}
To have geodesic strong convexity, we need to lower bound $\partial_t^2 F(\bSigma_t)|_{t=0}$ by $c \|(\bT - \bI) \bSigma_0^{1/2}\|_F^2 = c \dbw(\bSigma_0, \bSigma_1)^2$ for some $c>0$.
On the other hand, using the result of Section \ref{subsec:esm}, $\|\nabla F(\bSigma_0)\|_F \lesssim \|\bSigma_0 - \bS\|_F = |b|$, and so
\begin{equation}
    \partial_t^2 F(\bSigma_t)|_{t=0} \lesssim  b^2 + \|(\bT - \bI)\bSigma_0\|_F^2.
\end{equation}

We note that
\begin{align*}
	\|(\bT - \bI)\bSigma_0\|_F^2 &=  b^2.
\end{align*}
On the other hand, 
\begin{align*}
	\dbw^2(\bSigma_0, \bSigma_1) &= \|(\bT - \bI) \bSigma_0^{1/2}\|_F^2 = b, 
\end{align*}
There is no $c > 0$ such that
\begin{equation}
    b^2 \geq c b
\end{equation}
for all $b > 0$. Therefore, $F$ is not strongly geodesically convex at full-rank $\bSigma_0$ that are close the boundary. In particular, if the true barycenter is low-rank, then as the full-rank $\bSigma$ approach $\overline{\bSigma}$, we cannot expect strong convexity.

\section{Supplemental Experiments}


\subsection{Convergence of \BW\ gradient descent Versus \BW SGD}

\begin{figure}[h]
    \centering
    \includegraphics[width = 0.40\textwidth]{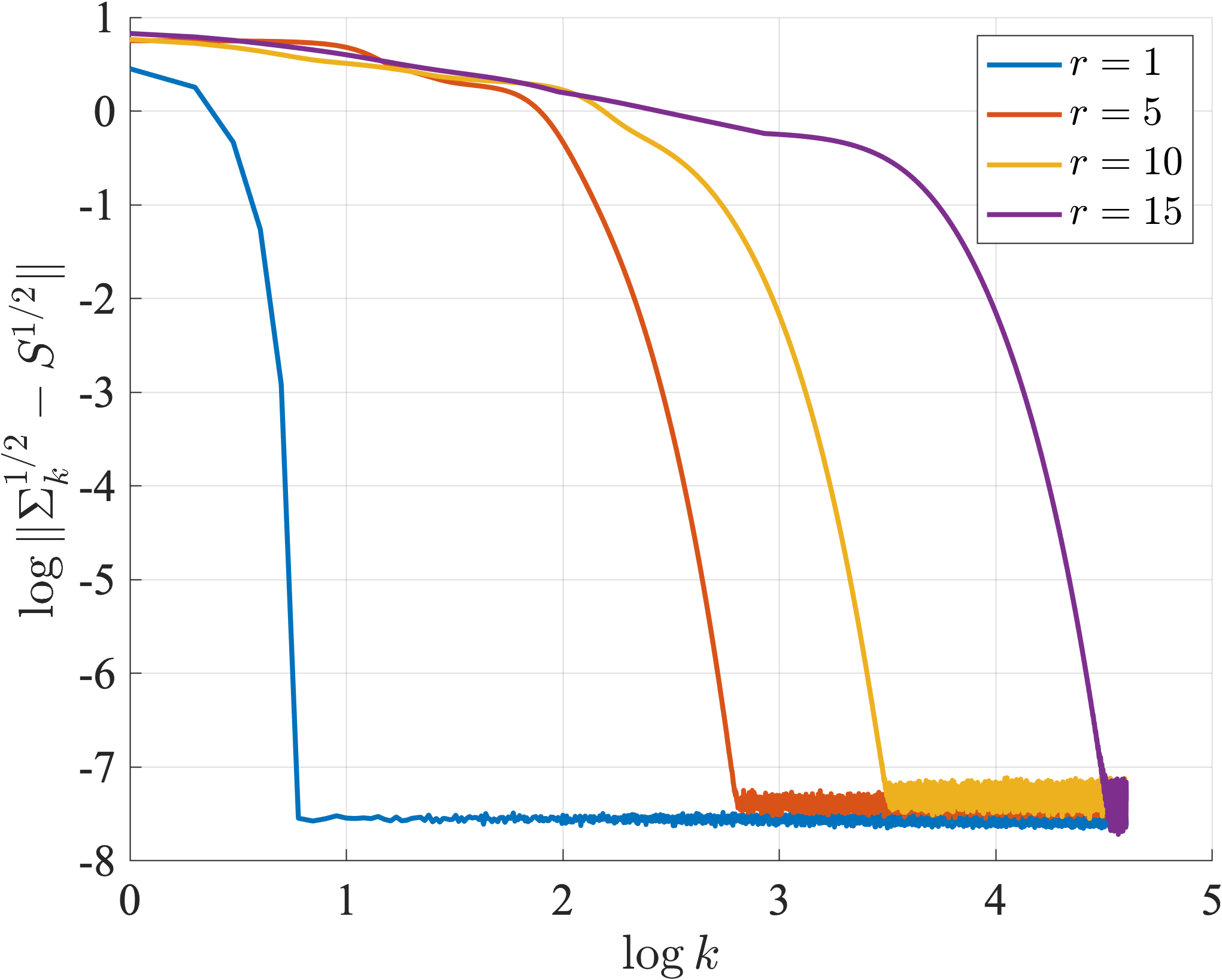}
    \includegraphics[width = 0.40\textwidth]{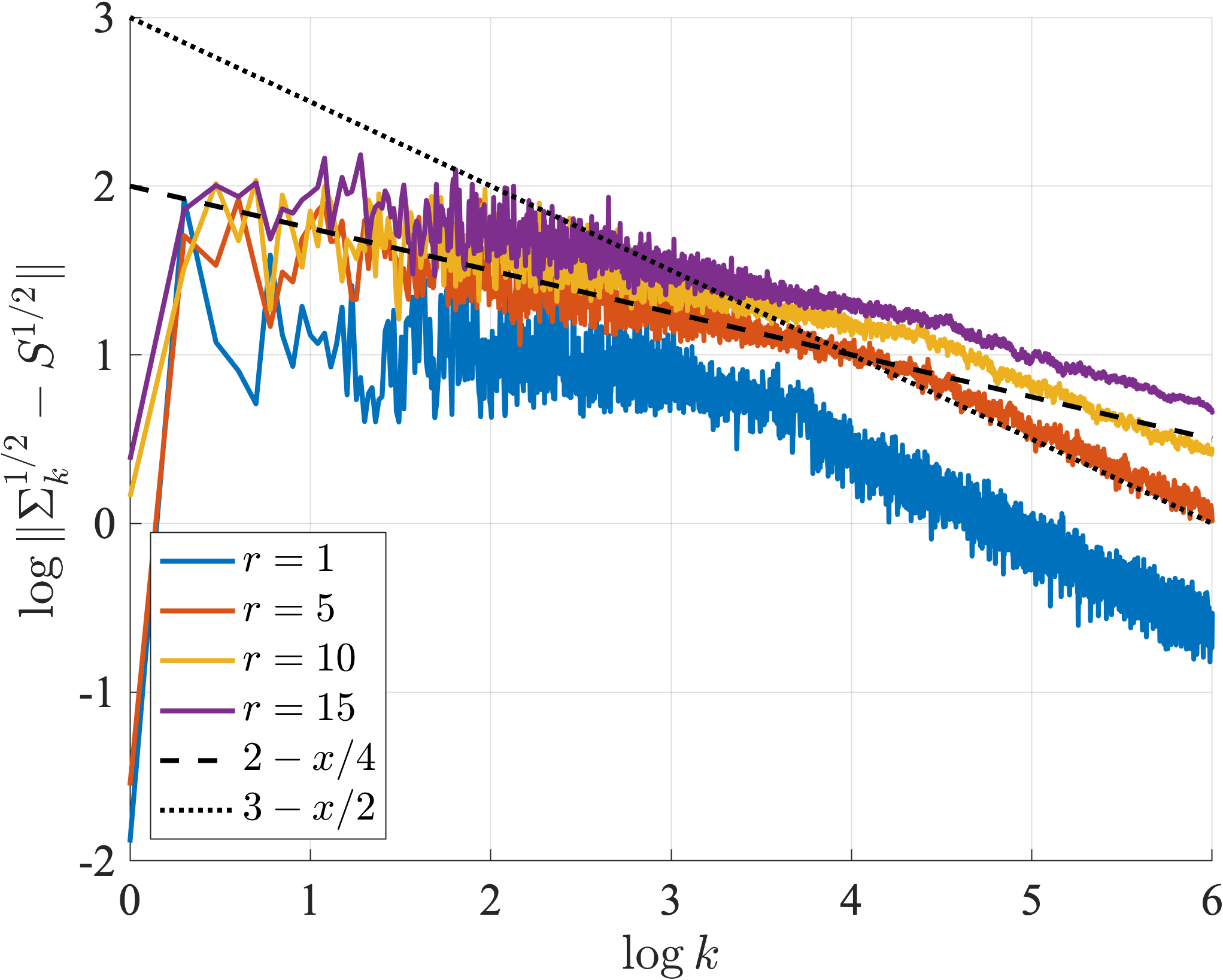}
    \caption{Convergence of BWGD and BWSGD for varying ranks.}
    \label{fig:ranks}
\end{figure}

In the plot for SGD, we also give lines to show the different rates. Here, it appears that SGD is converging to the true barycenter. Also, it appears to be converging at a faster than anticipated rate. An explanation of this phenomenon will be explored in future work. In the left plot of Figure \ref{fig:ranks}, we set $d=20$ and $n=d^2$, and we plot the error versus iteration for \BW\ gradient descent for the various ranks. As we can see, the convergence takes longer as the rank increases.
In the right plot of Figure \ref{fig:ranks}, we plot the error versus iteration for \BW SGD using the single sample gradient of \eqref{eq:gradss}, where at each iteration we draw a new sample. As we can see, the \BW SGD interpolates between two convergence regimes: a slow regime where the rate is $k^{-1/4}$ and a fast regime where the rate is $k^{-1/2}$. The latter rate is typical of cases where there is local strong convexity or a Polyak-\L{}ojasiewicz inequality.

\subsection{Abalone Dataset}

\begin{figure}[h!]
    \centering
    \includegraphics[width = 0.8\textwidth]{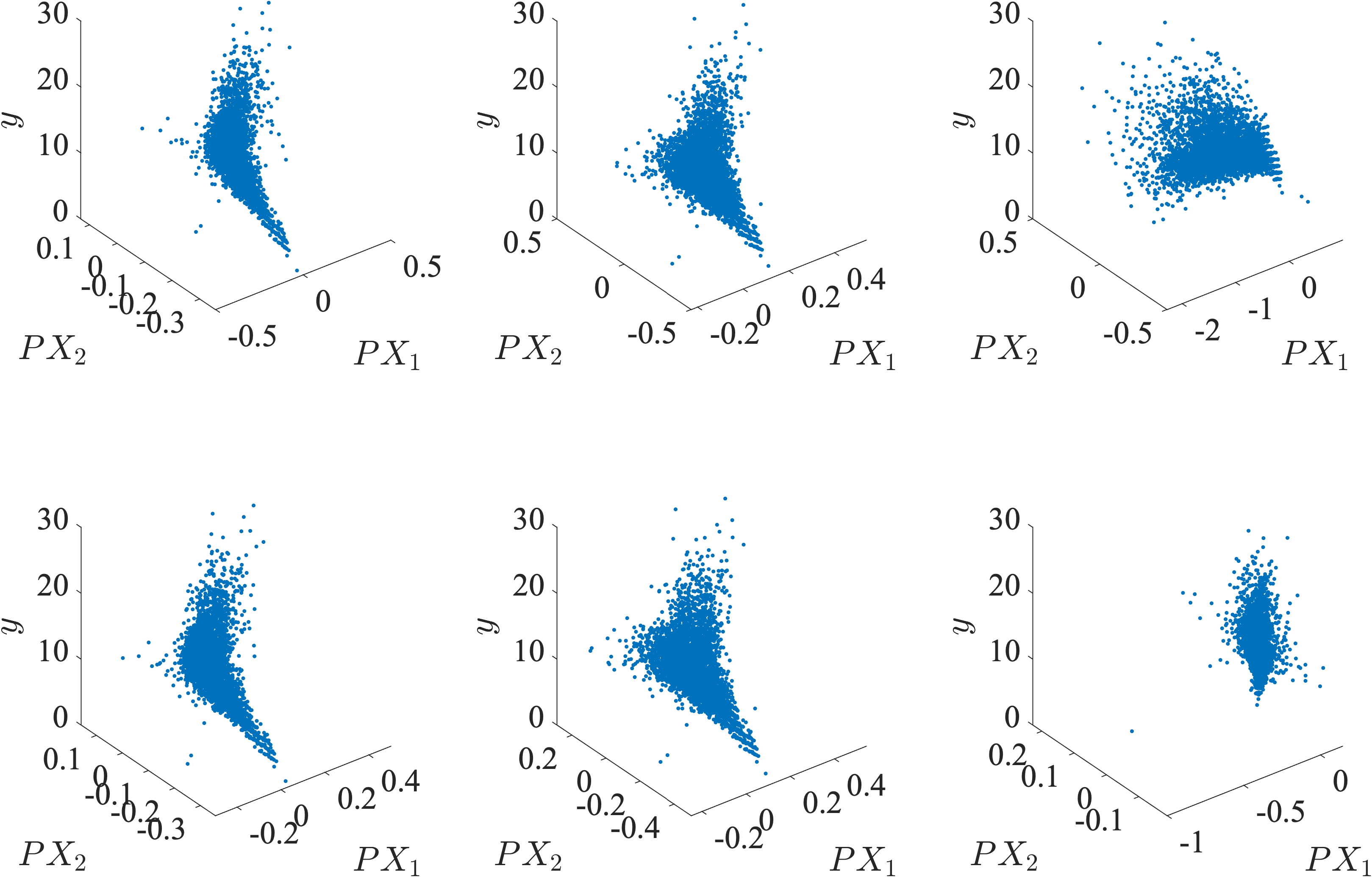}
    \caption{Projections for abalone data. The left column are projections for a fixed rank \BW\ and full-rank \BW\ barycenter, the middle column corresponds to low-rank and full-rank Euclidean gradient descent on \eqref{eq:lsmatrec_nc1}, and the right column corresponds to the top principal subspace of $\bx$ and the result of the spectral method.}
    \label{fig:aba}
\end{figure}

We also present an experiment with real data. This example is one where we attempt to measure the heterogeneity in a regression dataset. Here, we pick the classical Abalone dataset available from the UCI machine learning repository~\citep{uciml}. In this dataset, one attempts to predict the age of abalone from certain covariates. Linear regression models exhibit poor fit on this data for a variety of reasons. One reason, which we demonstrate here, is the presence of heterogeneity in the measurements.

Assuming the covariance recovery model $y_i = \langle \bx_i, \bw_i\rangle + \epsilon_i,$
 where $\bw_i \sim N(\bzero, \bS)$, we could try to measure heterogeneity in the data by recovering the covariance of the regression vectors, and then plotting how $\bx$ relates to $y$ in its top principal space. Here, we recover such a covariance, and then project $\bx$ onto the top two principal directions. We then plot $y$ against these two directions. The resulting plots show varying degrees of heterogenity depending on the method employed. Here, we compare \BW\ gradient descent, GD, PCA on the $\bx$'s, and the spectral method, which finds the top principal directions of the matrix 
\begin{equation}
	S_n = \frac{1}{2n}\sum_{i=1}^n y_i (\bx_i\bx_i^\top - \bI)
\end{equation}

As we can see, the spectral methods completely fails here. The \BW\ gradient descent method gives a similar but qualitatively different result from the Euclidean gradient descent method. In particular, there appear to be two primary directions of variation, which would indicate that this may be a mixture of two different regression components. Both \BW\ gradient descent and GD recover a stretched direction, but the secondary direction (in the $PX_1$ direction) appears to be more pronounced for \BW\ gradient descent.

\end{document}